\def\Z{\mathbb{Z}}                   
\def\Q{\mathbb{Q}}                   
\def\C{\mathbb{C}}                   
\def\N{\mathbb{N}}                   
\def\R{\mathbb{R}}                   
\def\rvf{W}   
\def\cvf{\delta}   
\def\di2{{ m}}   
\newcommand{\red}{}
\newcommand{\Res}{\mathrm{Res}}
\newcommand{\pro}{\mathbb{P}}
\newcommand{\Po}{\mathbb{H}}
\newcommand{\SL}{\mathrm{SL}}
\def\={\;=\;}
\def\+{\;+\;}
\def\-{\;-\;}
\def\:={\;:=\;}
\def\sl2{\mathfrak{sl}_2(\C)}
\def\SL2{{\rm SL}_2(\Z)}
\def\qmfs{\widetilde{\mathscr{M}}}             
\def\mfs{{\mathscr{M}}}                            
\def\rsdo{{\partial}}             
\def\G0g{{\Gamma_0(N_g)}}             
\def\wss{{{\sf w}}}             
\def\vs{{t}}             
\def\prs{{\mathscr{P}}}             
\def\nvs{{d}}             
\def\ew4{{\Lambda}}             
\def\mods{{RRC system}}
\newtheorem{theo}{Theorem}[section]
\newtheorem{coro}{Corollary}[section]
\newtheorem{prop}{Proposition}[section]
\newtheorem{rem}{Remark}[section]
\numberwithin{equation}{section}
\newtheorem*{theorem*}{Theorem}
\newtheorem{lemma}{Lemma}
\newtheorem{proposition}{Proposition}
\title{Ramanujan systems of Rankin-Cohen type and hyperbolic triangles}
\author[G.\,Bogo]{Gabriele Bogo}
\address{Fachbereich Mathematik\\ Technische Universität Darmstadt\\ Schlossgartenstrasse 7, 64289 Darmstadt\\
Germany}
\email{bogo@mathematik.tu-darmstadt.de}
\author[Y.\,Nikdelan]{Younes Nikdelan}
\address{Departamento de Análise Matemática\\ Instituto de Matemática e Estatística (IME)\\ Universidade do Estado do Rio de Janeiro (UERJ)\\ Rua São Francisco Xavier, 524, Rio de Janeiro, Brazil/ CEP: 20550-900}
\email{younes.nikdelan@ime.uerj.br}
\date{}
\subjclass[2020]{11F03, 34A34 (primary), 16W50, 11F55 (secondary)}
\keywords{Modular forms, systems of nonlinear ODEs, Rankin-Cohen brackets, Triangle groups, Modular embeddings}
\begin{document}
\maketitle
\begin{abstract}
In the first part of the paper we characterize certain systems of first order nonlinear differential equations whose space of solutions is an $\mathfrak{sl}_2(\C)$-module. We prove that such systems, called~\emph{Ramanujan systems of Rankin-Cohen type}, have a special shape and are precisely the ones whose solution space admits a Rankin-Cohen structure.
In the second part of the paper we consider triangle groups~$\Delta(n,m,\infty)$. By means of modular embeddings, we associate to every such group a number of systems of nonlinear ODEs whose solutions are algebraically independent twisted modular forms. In particular, all rational weight modular forms on~$\Delta(n,m,\infty)$ are generated by the solutions of one such system (which is of Rankin-Cohen type). As a corollary we find new relations for the Gauss hypergeometric function evaluated at functions on the upper half-plane. To demonstrate the power of our approach in the non-classical setting, we construct the space of integral weight twisted modular form on~$\Delta(2,5,\infty)$ from solutions of systems of nonlinear ODEs.
\end{abstract}

\section{Introduction}\label{section introduction}
Several papers in number theory, geometry, and mathematical physics deal with algebraic systems of nonlinear first order ODEs. A recurring example is the classical Ramanujan system
\begin{equation} \label{eq ramanujan}
 \left \{
\begin{aligned}
P'&\=P^2-\frac{Q}{144}\\
Q'&\=4PQ-\frac{R}{3} \\
R'&\=6PR-\frac{Q^2}{2}
\end{aligned}
\right.,
\end{equation}
whose solutions are the Eisenstein series $P=E_2/12,Q=E_4,R=E_6$ that generate the space of quasi-modular forms $\qmfs(\SL2)$. Here $':=q\tfrac{\partial}{\partial q}$ and~$q=e^{2\pi i\tau},\tau\in\Po$.
Similar systems were first considered by Darboux~\cite{da78} and Halphén~\cite{ha81}. The Ramanujan system~\eqref{eq ramanujan} plays a role in Nesterenko's proof~\cite{Nesterenko} of the algebraic independence over~$\Q$ of at least three numbers among~$q,E_2(q),E_4(q),E_6(q)$ for any~$q\in\C$ with~$0<|q|<1$ (see also Zudilin~\cite{zud03}).
Instances of systems of nonlinear ODEs related to mirror symmetry appear in the works of Alim et al.~\cite{ali17,murmar}.

From a geometric perspective, these systems of differential equations are often related to elliptic curves or K3-surfaces. Movasati \cite{ho14} proved that the Ramanujan system~\eqref{eq ramanujan} is the unique vector field on the moduli space of the family of the elliptic curves
\begin{equation} \label{eq Weierstrass family}
 y^2=4(x-t_1)^3-t_2(x-t_1)-t_3\,,\quad(t_1,t_2,t_3)\in\C^3\;\text{with }\,27t_3^2-t_2^3\neq 0\ ,
\end{equation}
that satisfies a certain equation involving the Gauss-Manin connection of the universal family of \eqref{eq Weierstrass family}.
Pursuing this interpretation, Movasati~\cite{ho22},\cite{GMCD-MQCY3} introduced a new technique called~\emph{Gauss-Manin connection in disguise} and, together with the second author~\cite{movnik}, used it to associate a canonical system of nonlinear ODEs to a moduli space of enhanced Calabi-Yau $n$-folds arising from the Dwork family, for any $n\in \N$. An interesting arithmetic aspect of their work is the following. The solutions of these systems for $n=1,2$ are quasimodular forms on congruence subgroups; for~$n=3,4$, the~$q$-expansion solutions have integral coefficients but, due to their fast growth, cannot be related to classical quasimodular forms.
The functions generated by the solutions of these systems were called~\emph{Calabi-Yau (CY) modular forms}.
In subsequent works \cite{younes2,younes3}, the second author deepened the analogy between CY modular forms and quasimodular forms; in particular, he showed the existence of a natural~\emph{Rankin-Cohen structure} on the space of CY modular forms.

Given a Fuchsian group~$\Gamma$, the~\emph{Rankin-Cohen brackets}~$[\,,]_n$ on the space of modular forms~$\mfs(\Gamma)$ are defined, for~$f\in\mfs_k(\Gamma)$ and~$g\in\mfs_l(\Gamma)$, by
\[
  [f,g]_n:=\sum_{r+s=n}(-1)^j\frac{(k+n-1)_s}{s!}\frac{(l+n-1)_r}{r!}f^{(r)}g^{(s)} \,\in\mfs_{k+l+2n}(\Gamma)\,,
\]
where~$f^{(r)}=d^rf/d\tau^r$ and~$(\alpha)_s=\alpha(\alpha+1)\cdots(\alpha+s-1)$.
More generally, a~\emph{Rankin-Cohen (RC) structure} is an associative commutative graded algebra together with a (countable) collection of bilinear operators that satisfy all the algebraic relations satisfied by the Rankin-Cohen brackets (see Section~\ref{sec:rc} for the definitions and properties of RC structures).
Other than in the theory of modular forms, RC brackets appear for instance in the study of pseuodifferential operators~\cite{CohenManinZagier}. Connes and Moscovici~\cite{ConnesMoscovici} generalized RC-structures by considering associative algebras endowed with an action of a certain Hopf algebra. Finally, El Gradechi~\cite{Elgradechi} recovered the classical Rankin-Cohen brackets while studying a general Lie-theoretic characterization of all the~$\mathrm{SL}_2(\R)$-equivariant holomorphic bi-differential operators on the upper half-plane.

Our first aim (Section~\ref{section preliminaries}) is to make the relation between systems of nonlinear ODEs and Rankin-Cohen structures explicit. The main result is the following (Theorems~\ref{thm 1} and~\ref{thm 2}).
\begin{theorem*}
Let~$D$ be a derivation on a graded algebra over a field $K$ of characteristic zero. Consider the following system of nonlinear ODEs
\begin{equation}
\label{eqn:rcintro}
\left\{
  \begin{array}{ll}
    D{\vs_1}=\vs_1^2+\prs_1(\vs_2,\vs_3,\ldots,\vs_\nvs)  \\\\
    D{\vs_2}={\wss_2}\vs_1\vs_2+\prs_2(\vs_2,\vs_3,\ldots,\vs_\nvs)  \\\\
    D{\vs_3}={\wss_3}\vs_1\vs_3+\prs_3(\vs_2,\vs_3,\ldots,\vs_\nvs)  \\
    \vdots  \\
    D{\vs_\nvs}={\wss_\nvs}\vs_1\vs_\nvs+\prs_\nvs(\vs_2,\vs_3,\ldots,\vs_\nvs)
  \end{array}
\right.\,,
\end{equation}
where~$\vs_j$ is of degree $\wss_j\in\Q\,,\wss_1=2$, and~$\prs_j(\vs_2,\vs_3,\ldots,\vs_\nvs)\in K[\vs_2,\dots,\vs_d]$ is a quasi-homogeneous polynomial of degree~$\wss_j+2$. The following equivalent statements hold:
\begin{enumerate}
\item The algebra~$M:=K[\vs_2,\dots,\vs_d]$ has a canonical RC structure. Conversely, every finitely generated canonical RC algebra arises in this way.
\item The finitely generated algebra~$\widetilde{M}=M[t_1]$ has a standard RC structure and is endowed with an~$\sl2$-module structure.
\end{enumerate}
\end{theorem*}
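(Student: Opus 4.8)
\emph{Proof proposal.}
By the general correspondence developed in Section~\ref{sec:rc} (in the spirit of~\cite{Elgradechi}), equipping a graded algebra with a standard RC structure is the same as making it an $\sl2$-module of the appropriate type, and a canonical RC structure on $M$ corresponds to a standard RC structure on the polynomial extension $M[t_1]$. Granting these reductions, the statement comes down to matching the system~\eqref{eqn:rcintro} with an $\sl2$-module structure on $\widetilde{M}=M[t_1]$ compatible with the grading. The plan is therefore: (i) read off an explicit $\sl2$-triple from~\eqref{eqn:rcintro}; (ii) extract from it the standard RC structure on $\widetilde{M}$ and, via the Ramanujan--Serre derivation, its ``modular part'', the canonical RC structure on $M$; (iii) reverse the construction to obtain the converse in~(1).

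For step~(i), let $W$ be the weight operator on $\widetilde{M}$ (multiplication of a homogeneous element by its degree) and $m_{t_1}$ multiplication by $t_1$. Quasi-homogeneity of the $\mathscr P_j$ forces $D$ to be homogeneous of degree $2$ (it shifts the degree of each generator $t_j$ by $2$ and is a derivation), and then $\partial:=D-W\,m_{t_1}$ is a degree-$2$ derivation of $\widetilde{M}$ that maps the subalgebra $M=K[t_2,\dots,t_d]$ into itself, because $\partial t_j=Dt_j-\mathsf{w}_j\,t_1 t_j=\mathscr P_j(t_2,\dots,t_d)\in M_{\mathsf{w}_j+2}$ for $j\ge 2$; call this $\partial$ the Ramanujan--Serre derivation of $M$. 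Next I would verify that
\[
  \mathfrak e:=D,\qquad \mathfrak h:=W,\qquad \mathfrak f:=-\tfrac{\partial}{\partial t_1}
\]
satisfy $[\mathfrak h,\mathfrak e]=2\mathfrak e$, $[\mathfrak h,\mathfrak f]=-2\mathfrak f$ and $[\mathfrak e,\mathfrak f]=\mathfrak h$, by a direct computation on the elements $g\,t_1^n$ with $g\in M$; the only inputs are that $D$ has degree $2$, that $Dt_1=t_1^2+\mathscr P_1$ with $\mathscr P_1\in M$, and that $\partial$ preserves $M$. This produces the $\sl2$-module structure on $\widetilde{M}$. Conversely, for any derivation $D$ on a polynomial extension $M[t_1]$ with $t_1$ of degree $2$, the relations $[\mathfrak h,D]=2D$ and $[D,\partial/\partial t_1]=-\mathfrak h$, evaluated on the generators, force $D$ to obey a system of the shape~\eqref{eqn:rcintro} with $\mathsf{w}_j=\deg t_j$ and $\mathscr P_j$ quasi-homogeneous of degree $\mathsf{w}_j+2$; so~\eqref{eqn:rcintro} is precisely the class of derivations carrying this $\sl2$-structure.

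For step~(ii), the standard RC structure on $\widetilde{M}$ is the one attached to $D$ (equivalently, to the triple above) through the universal Rankin--Cohen formula
\[
  [f,g]_n\=\sum_{r+s=n}(-1)^r\,\frac{(\deg f+n-1)_s}{s!}\,\frac{(\deg g+n-1)_r}{r!}\,(D^r f)(D^s g),
\]
the RC axioms for these brackets being where the $\sl2$-module structure enters, via the correspondence recalled in Section~\ref{sec:rc}. To descend to $M$, replace $D$ by the Ramanujan--Serre derivation $\partial$, which satisfies $\partial M\subseteq M$: the same formula then takes values in $M$, and these $\partial$-brackets coincide with the restrictions to $M$ of the $D$-brackets --- for $n=1$ this is the cancellation $\deg f\cdot f\,\partial g-\deg g\cdot(\partial f)g=\deg f\cdot f\,Dg-\deg g\cdot(Df)g$ of the $t_1$-terms, and for general $n$ an analogous telescoping governed by the Pochhammer factors. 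This identifies the canonical RC structure on $M$ with the sub-RC-structure of $\widetilde{M}$ supported on $M$, which gives (1)$\Leftrightarrow$(2). For step~(iii), given a finitely generated canonical RC algebra $M$, take the degree-$2$ derivation $\partial$ underlying its RC structure, choose homogeneous algebra generators $t_2,\dots,t_d$ of degrees $\mathsf{w}_j$, set $\mathscr P_j:=\partial t_j\in M_{\mathsf{w}_j+2}$, adjoin an indeterminate $t_1$ of degree $\mathsf{w}_1=2$, fix any $\mathscr P_1\in M_4$, and define a derivation $D$ on $M[t_1]$ by $D|_M=\partial+W\,m_{t_1}$ and $Dt_1=t_1^2+\mathscr P_1$; the resulting equations have the form~\eqref{eqn:rcintro}, and by steps~(i)--(ii) the attached canonical RC structure is the one we started from, which settles the converse.

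The delicate point is the comparison in step~(ii) between the $D$-brackets on $\widetilde{M}$ and the $\partial$-brackets on $M$: turning ``the $t_1$-corrections telescope'' into a proof reduces to a generating-function identity in the Pochhammer symbols $(\alpha)_s$ together with careful bookkeeping of the depth in $t_1$, and this is where the real work lies, rather than in the largely mechanical $\sl2$-computation of step~(i). I also stress that quasi-homogeneity of the $\mathscr P_j$ is indispensable already in step~(i): it is precisely what makes $D=\mathfrak e$ homogeneous of degree $2$ and hence $[\mathfrak h,\mathfrak e]=2\mathfrak e$ hold, so this is the exact sense in which~\eqref{eqn:rcintro} is forced to have its ``special shape''.
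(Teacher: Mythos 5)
Your overall architecture --- the $\sl2$-triple $(D,\,W,\,-\partial/\partial t_1)$ on $\widetilde M=M[t_1]$, the Ramanujan--Serre derivation defined by $\partial t_j:=Dt_j-\wss_j t_1t_j=\prs_j$ for $j\ge2$, and the reversal of the construction for the converse --- is the same as the paper's (Theorems~\ref{thm 1} and~\ref{thm 2}), and your step~(i) is essentially the paper's computation. The genuine problem is in step~(ii), and it is an error rather than a deferred computation. You claim that the brackets obtained by substituting $\partial$ for $D$ in the universal formula (i.e.\ using $\partial^rf$ and $\partial^sg$) coincide with the restrictions to $M$ of the $D$-brackets of $\widetilde M$. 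This holds for $n=1$ (the $t_1$-terms cancel, as you note) but fails for $n\ge2$: from $Df=\partial f+kt_1f$ and $Dt_1=t_1^2+\prs_1$ one computes $D^2f=\partial^2f+(2k+2)t_1\partial f+k(k+1)t_1^2f+k\prs_1 f$, and in $[f,g]_{D,2}$ the $t_1$-terms cancel while the $\prs_1$-terms survive (they enter with coefficients $\tfrac{(k+1)(k+2)}{2}l+\tfrac{(l+1)(l+2)}{2}k$, which is not zero in general). The correct statement is precisely Zagier's Proposition~\ref{prop crcab}: the restricted brackets are given by~\eqref{eqn:rccan} with $f_r$ defined by the $\Phi$-corrected recursion $f_{r+1}=\partial f_r+r(r+k-1)\Phi f_{r-1}$, where $\Phi:=\prs_1$. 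Your ``telescoping'', if carried out, would show that your claimed identity is false: the naive iterated-$\partial$ brackets are the \emph{standard} RC structure attached to $\partial$, which differs from the canonical one whenever $\Phi\neq0$.

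This is not mere bookkeeping: the point of the theorem is that $\prs_1$, the inhomogeneous term of the \emph{first} equation of the system, is the datum $\Phi$ of the canonical RC structure on $M$ (compare Remark~\ref{rmk:Delta}, where $\Phi$ is recovered as $[F,F]_2/\bigl(\wss^2(\wss+1)F^2\bigr)$). In your step~(ii) the first equation plays no role in the brackets on $M$, and in step~(iii) you accordingly allow ``any $\prs_1\in M_4$'' --- but then the canonical RC structure you recover is not the one you started from unless you set $\prs_1:=\Phi$. The repair is to replace the comparison in step~(ii) by an appeal to (or a proof of) Proposition~\ref{prop crcab} together with the extension~\eqref{eqn:extD}, and to fix $\prs_1=\Phi$ in step~(iii); with those changes your argument becomes the paper's. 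A smaller remark: in the converse direction the paper does not posit $\cvf=-\partial/\partial t_1$ but derives it (up to a constant, via a short case analysis on the weights) from the weaker hypothesis $\ker\cvf=M$, whereas you assume this normalization from the outset.
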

We call systems of the form~\eqref{eqn:rcintro}~\emph{Ramanujan systems of Rankin-Cohen type} or simply~\emph{RRC systems}.
\begin{rem} \label{rem alg dep}
\red{It may happen that the elements $t_1,t_2,\ldots,t_d$ are not algebraically independent over $K$. In this case, in order to have a free graded algebra, here and in the rest of this manuscript, we can substitute $K[t_1,t_2,\ldots,t_d]$ by
    \[
    \hat{K}[t_1,t_2,\ldots,t_d]:=\frac{K[t_1,t_2,\ldots,t_d]}{\mathcal{I}},
    \]
    and the elements $t_j$ by $\hat{t}_j:=t_j+\mathcal{I}$, where $\mathcal{I}$ is the sub-algebra generated by all polynomial relations $P(t_1,t_2,\ldots,t_d)=0$ over $K$. Under these changes, all the facts and the proofs stay the same.}
\end{rem}

It is natural to ask which RC structures arise from this construction in familiar situations.
As a starting point, in the second part of the paper (Section~\ref{sec:notation}), we associate to every hyperbolic triangle a system of nonlinear ODEs, via the uniformizing hypergeometric differential equation and a result of Ohyama~\cite{ohy96}.
These systems generalize the classical Ramanujan system~\eqref{eq ramanujan}: the solutions are algebraically independent~\emph{twisted modular forms} in the sense of Möller-Zagier~\cite{MoellerZagier} (see below in the introduction and  Section~\ref{sec:tmf} for the definition).
The main tool in the construction is the  modular embedding attached to every triangle group~$\Delta(n,m,\infty)$. Any such group has an inclusion map~$\iota\colon\Delta(n,m,\infty)\hookrightarrow G$ into a group~$G$ acting properly discontinuously on a product of upper half-planes~$\Po^h$. A modular embedding for~$\Delta(n,m,\infty)$ is a holomorphic map~$\phi=(\phi_j)_{j=1}^h\colon\Po\to\Po^h$ such that
\[
\phi(\gamma\cdot\tau)\=\iota(\gamma)\cdot\phi(\tau)\,,\quad\text{for every }\gamma\in\Delta(n,m,\infty)\,.
\]

Each component~$\phi_j$ of the modular embedding~$\phi$ gives rise to an automorphy factor on~$\Delta(n,m,\infty)$
\[
J_{\phi_j}(\gamma,\tau):=c^{\sigma_j}\phi_j(\tau)+d^{\sigma_j}\,,\quad\gamma=\begin{pmatrix}a & b\\c&d\end{pmatrix}\in\Delta(n,m,\infty)\,,
\]
where~$\sigma_j\colon K_{n,m}\to\R$ are the embeddings of the totally real trace field~$K_{n,m}$ of~$\Delta(n,m,\infty)$ and~$x^{\sigma_j}:=\sigma_j(x)$.
It follows that, for every~$\vec{w}=(w_1,\dots,w_h)\in\Q^h$, the expression
\[
\bigl(f\bigl|_{\vec{w}}\gamma\bigr)(\tau)\:=f(\gamma\tau)\prod_{j=1}^h{J_{\phi_j}(\gamma,\tau)^{-w_j}}\,
\]
defines an action on the space of holomorphic functions on~$\Po$.
A holomorphic function~$f\colon\Po\to\C$ of moderate growth at~$\infty$ is a~\emph{twisted modular form} of weight~$\vec{w}$ on~$\Delta(n,m,\infty)$ with respect to~$\phi$ if~$f|_{\vec{w}}\gamma=f$ for every~$\gamma\in\Delta(n,m,\infty)$. A twisted modular form of weight~$(w_1,0,\dots,0)$ is a classical modular form of weight~$w_1$,

Our result is the following (for a more precise formulation see Theorem~\ref{thm:hymain} and Corollary~\ref{cor:dim}).
\begin{theorem*}
Let~$n\le m\in\Z_{>0}$ be such that~$\tfrac{1}{n}+\tfrac{1}{m}<1$ and consider a modular embedding~$\phi=(\phi_j)_{j=1}^h\colon\Po\to\Po^h$ for~$\Delta(n,m,\infty)$.
\begin{enumerate}[wide=0pt]
\item For every~$j=1,\dots,h$ there exist~$k_j,r_j\in\Z_{>0}$ with~$\tfrac{k_j}{n}+\tfrac{r_j}{m}<1$  such that the system of nonlinear ODEs
\[
\left\{
\begin{aligned}
\frac{P_j'}{\phi_j'}&\=P_j^2\-\Bigl(\frac{mn-mk_j-nr_j}{2nm}\Bigr)^2Q_j^{m-2r_j}R_j^{n-2k_j}\\
\frac{Q_j'}{\phi_j'}&\=\frac{2n}{mn-mk_j-nr_j}P_jQ_j\-\frac{R_j^{n-k_j}Q_j^{1-r_j}}{m}\\
\frac{R_j'}{\phi_j'}&\=\frac{2m}{mn-mk_j-nr_j}P_jR_j\-\frac{Q_j^{m-r_j}R_j^{1-k_j}}{n}\,.
\end{aligned}
\right.
\]
admits algebraically independent solutions~$P_j,Q_j,R_j\colon\Po\to\C$ that are holomorphic on~$\Po$ and of moderate growth at~$\infty$.
\item The functions~$Q_j,R_j$ are twisted modular forms of rational weight on~$\Delta(n,m,\infty)$. More precisely, the polynomial algebra~$\C[Q_j,R_j]$ is the space of all twisted modular forms on~$\Delta(n,m,\infty)$ of weight~$(0,\dots,0,w_j,0,\dots,0)\,,w_j\in\Q$.
\end{enumerate}
\end{theorem*}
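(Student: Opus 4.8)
\smallskip
\noindent\emph{Sketch of the proof.}
The plan is to realise each~$\phi_j$ as a period ratio for a hypergeometric equation obtained from the uniformising equation of~$\Delta:=\Delta(n,m,\infty)$ by Galois conjugation, and to feed that equation into the construction of Ohyama~\cite{ohy96}. Recall that~$\Delta$ is the monodromy group of a hypergeometric equation~$\mathcal{L}$ read in the Hauptmodul variable~$t$, with local exponent differences~$\tfrac1n,\tfrac1m,0$ at~$t=0,1,\infty$, and that the Schwarz map of~$\mathcal{L}$ is inverse to~$t\colon\Po\to\pro^1$. For each~$j$ the embedding~$\sigma_j$ carries the monodromy of~$\mathcal{L}$ to that of a hypergeometric equation~$\mathcal{L}_j$ with exponent differences~$\tfrac{k_j}{n},\tfrac{r_j}{m},0$, where~$k_j,r_j\in\Z_{>0}$ are read off from~$\sigma_j(2\cos\tfrac{\pi}{n})=\pm2\cos\tfrac{k_j\pi}{n}$ and~$\sigma_j(2\cos\tfrac{\pi}{m})=\pm2\cos\tfrac{r_j\pi}{m}$ (taking representatives in~$(0,1)$), and the cusp exponent stays~$0$ because that local monodromy has rational trace; for~$j=1$ one has~$(k_1,r_1)=(1,1)$. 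The hypothesis that a modular embedding into~$\Po^h$ exists forces each~$\mathcal{L}_j$ to be of hyperbolic type, i.e.~$\tfrac{k_j}{n}+\tfrac{r_j}{m}<1$. Composing a basis of solutions of~$\mathcal{L}_j$ with~$t$ yields functions~$y_j,\widetilde{y}_j$ that are single-valued and holomorphic on~$\Po$: the pulled-back local monodromy around an elliptic point of~$\Delta$ of order~$n$ (resp.~$m$) is the~$n$-th (resp.~$m$-th) power of a local monodromy of~$\mathcal{L}_j$, hence trivial, and since~$k_j,r_j$ are positive integers both solutions pull back without poles. One then checks that~$\phi_j=\widetilde{y}_j/y_j$ is the~$j$-th component of the given modular embedding, the equivariance~$\phi_j(\gamma\tau)=\iota(\gamma)\cdot\phi_j(\tau)$ amounting to the statement that the monodromy of~$(y_j,\widetilde{y}_j)$ along the~$t$-loop represented by~$\gamma$ is~$\gamma^{\sigma_j}$, the entrywise~$\sigma_j$-conjugate of~$\gamma$ (compare~\cite{MoellerZagier}).

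I would then apply Ohyama's construction~\cite{ohy96} to~$\mathcal{L}_j$. It produces a ``quasimodular'' function~$P_j$, built from~$y_j'/y_j$ and~$t'/t$, and two functions~$Q_j,R_j$ of the shape~$y_j^{\,u}\cdot(\text{rational function of }t)$ with~$u=\tfrac{2n}{mn-mk_j-nr_j}$, resp.~$u=\tfrac{2m}{mn-mk_j-nr_j}$, such that Ohyama's relations, rewritten through the period ratio~$\phi_j$ (so that~$\tfrac{d}{d\phi_j}=\tfrac{1}{\phi_j'}\tfrac{d}{d\tau}$), take exactly the displayed form with~$c_j:=\tfrac{mn-mk_j-nr_j}{2nm}$. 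A bookkeeping with the weights~$\deg P_j=2$, $\deg Q_j=\tfrac{2n}{mn-mk_j-nr_j}$, $\deg R_j=\tfrac{2m}{mn-mk_j-nr_j}$, which are forced by quasi-homogeneity, pins down the coefficients and exponents; for~$(n,m,k_j,r_j)=(2,3,1,1)$ one recovers~\eqref{eq ramanujan} (cf.~\cite{ho14}), and in particular each such system is an \mods, so Theorems~\ref{thm 1}--\ref{thm 2} apply. The rational functions of~$t$ occurring in~$Q_j,R_j$ are chosen with the correct orders of vanishing at~$t=0,1$ so that, together with the holomorphy of~$y_j$ and the positivity of~$k_j,r_j$, the functions~$P_j,Q_j,R_j$ are holomorphic on all of~$\Po$; at the cusp the exponent~$0$ yields moderate growth at~$\infty$. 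Algebraic independence of~$P_j,Q_j,R_j$ over~$\C$ follows as in the classical case, using that~$t$ is non-constant, that~$y_j$ is not algebraic over~$\C(t)$ (since~$\mathcal{L}_j$ has infinite monodromy), and that~$y_j,y_j'$ are algebraically independent over~$\C(t)$ (the differential Galois group of~$\mathcal{L}_j$ being~$\SL_2$). This proves~(1).

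For~(2), using~$y_j(\gamma\tau)=c^{\sigma_j}\widetilde{y}_j(\tau)+d^{\sigma_j}y_j(\tau)=J_{\phi_j}(\gamma,\tau)\,y_j(\tau)$ together with the~$\Delta$-invariance of~$t$, the shape of~$Q_j,R_j$ immediately gives~$Q_j(\gamma\tau)=J_{\phi_j}(\gamma,\tau)^{u_j^{Q}}Q_j(\tau)$ and~$R_j(\gamma\tau)=J_{\phi_j}(\gamma,\tau)^{u_j^{R}}R_j(\tau)$ with~$u_j^{Q}=\tfrac{2n}{mn-mk_j-nr_j}$, $u_j^{R}=\tfrac{2m}{mn-mk_j-nr_j}$; combined with the holomorphy and growth from~(1), this shows that~$Q_j,R_j$, and hence every element of~$\C[Q_j,R_j]$, are twisted modular forms of weight~$(0,\dots,0,w_j,0,\dots,0)$ on~$\Delta$. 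For the reverse inclusion I would argue by dimension: if~$f$ is a twisted modular form of weight~$(0,\dots,0,w_j,0,\dots,0)$, then~$f/y_j^{\,w_j}$ is~$\Delta$-invariant, hence descends to a function on~$\Po/\Delta\cong\pro^1$ that is meromorphic with poles only at~$t=0,1,\infty$ of order bounded by the holomorphy and moderate growth of~$f$ and by the exponents of~$y_j$. Comparing with the~$t$-expansions of the monomials~$Q_j^{\alpha}R_j^{\beta}$ of weight~$w_j$, which are linearly independent by the algebraic independence of~$Q_j,R_j$, one obtains that~$\dim M_{(0,\dots,0,w_j,0,\dots,0)}$ equals~$\#\{(\alpha,\beta)\in\Z_{\ge 0}^2:\alpha u_j^{Q}+\beta u_j^{R}=w_j\}$ --- this count being the content of the dimension formula of Corollary~\ref{cor:dim}, obtained from orbifold Riemann--Roch on~$\pro^1$ with its three marked points --- so that~$M_{(0,\dots,0,w_j,0,\dots,0)}=(\C[Q_j,R_j])_{w_j}$. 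Summing over~$w_j\in\Q$ gives~(2).

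The step I expect to be the main obstacle is the reverse inclusion in~(2): determining, uniformly in~$w_j\in\Q$, the exact polar behaviour at~$t=0,1,\infty$ of~$f/y_j^{\,w_j}$ --- the delicate point being the non-integral exponents of~$y_j$, equivalently the non-integral ``orbifold degree'' of the line bundle whose global sections are the weight-$(0,\dots,0,w_j,0,\dots,0)$ forms --- and matching the resulting Riemann--Roch dimension with the monomial count. The subordinate technical points are checking that Ohyama's construction specialises to exactly the coefficients and exponents in the statement, and that the existence of a modular embedding into~$\Po^h$ indeed forces~$\tfrac{k_j}{n}+\tfrac{r_j}{m}<1$ (equivalently, that all the conjugate triangles are hyperbolic).
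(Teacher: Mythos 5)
Your outline is essentially correct, and for part (1) it follows exactly the route the authors themselves describe in Remark~\ref{rem XYZ}: pass to the Galois-conjugate hypergeometric equation, run it through Ohyama's system~\eqref{eqn:ohyama}, and change variables to land on~\eqref{eqn:hyrc}. The paper's written proof goes the other way around: it \emph{defines} $P_j,Q_j,R_j$ by the closed formulas~\eqref{eqn:hysol} in terms of the Hauptmodul $t$, its derivative, and $\phi_j'$, reads off holomorphy from the divisor computations of Lemma~\ref{lem:divmod} (so that $\mathrm{div}\,Q_j=\tfrac1m e_m$, $\mathrm{div}\,R_j=\tfrac1n e_n$), and verifies the three ODEs directly --- the $P_j$-equation via the Schwarzian identity $\{t,\phi_j\}=2(dt/d\phi_j)^2\mathcal{Q}_j(t)$. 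What the paper's route buys is that holomorphy and the twisted-modularity of $Q_j,R_j$ are immediate from the divisors, whereas in the Ohyama route one must track the residues of $X_j,Y_j,Z_j$ at the elliptic points (as in Remark~\ref{rem XYZ}) to see that the right combinations are holomorphic. Your algebraic-independence argument (differential Galois group of $\mathcal{L}_j$ equal to $\SL_2$) is a legitimate alternative to the paper's more elementary trick of comparing the divisors of the equal-weight forms $Q_j^{mN_j}$ and $R_j^{nN_j}$. For part (2) both proofs are the same in substance: an a priori upper bound on $\dim M_{\vec w_j}$ matched against the count of monomials $Q_j^aR_j^b$; the paper obtains the bound by integrating $d\log f$ over a fundamental domain (Proposition~\ref{prop:dimtws}), which is your orbifold Riemann--Roch in disguise.

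Two cautions on the step you flag as the main obstacle. First, you cannot invoke Corollary~\ref{cor:dim} for the dimension count --- it is deduced \emph{from} Theorem~\ref{thm:hymain} --- so the upper bound must come from the independent valence-formula argument of Proposition~\ref{prop:dimtws}. Second, $f/y_j^{w_j}$ is $\Delta(n,m,\infty)$-invariant only up to the multiplier system $v$ of the automorphy factor (and up to the branch ambiguity of $y_j^{w_j}$ for fractional $w_j$), so it does not literally descend to $\pro^1$; since $M_{\vec w_j}$ is by definition the sum over \emph{all} automorphy factors of weight $\vec w_j$, your descent must either be applied to a suitable integral power of $f$ or be replaced by the contour-integration form of the valence formula, which is insensitive to the multiplier. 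Neither point invalidates the strategy, but both need to be addressed to close the argument.
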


If~$(n,m)=(2,3)$, then~$h=1$ and we recover the Ramanujan system~\eqref{eq ramanujan}.
Nevertheless, other choices of~$(n,m)$ lead to entirely new  interesting systems (see the case of the arithmetic group~$\Delta(3,3,\infty)$ and of the non-arithmetic group~$\Delta(2,5,\infty)$ discussed in Section~\ref{sec:examples}.)

A number of corollaries follows from the above result. In Corollary~\ref{cor:dim} different modular characterizations of the polynomial ring~$\C[Q_j,R_j]$ and dimension formulae are given.

Corollaries~\ref{cor:hgde} and~\ref{cor:hgde'} concern the Gauss hypergeometric function~$F(\alpha,\beta;1;z)$. Let~$\phi=(\phi_j)\colon\Po\to\Po^h$ be a modular embedding for~$\Delta(n,m,\infty)$. For every~$j=1,\dots,h$ let~$k_j,r_j$ be as in Theorem~\ref{thm:hymain}, and write~$Q=Q_1,R=R_1,$ and~$N_j:=mn-nr_j-mk_j$. Then
\[
F\biggl(\frac{N_j+2nr_j}{2mn},\frac{N_j}{2mn};1;\frac{Q(\tau)^m-R(\tau)^n}{Q(\tau)^m}\biggr)^2\=\frac{Q(\tau)^{N_j/n+r_j-1}R(\tau)^{k_j-1}}{\phi_j'(\tau)}\,.
\]
Again, this generalizes the classical relation for~$Q=E_4$ and~$R=E_6$. Notice that in the above identity we are varying, as~$j$ varies, the parameters~$\alpha,\beta$ of~$F(\alpha,\beta;1;z)$ and leaving fixed the expansion parameter~$z$. In this way, we get on the right-hand side the derivative of all the components of the modular embedding~$\phi$.

Finally, in Corollary~\ref{cor:RC} we describe which of the systems appearing in the above theorem are of RRC type. We show in particular that in the case~$j=1$ the polynomial algebra~$\C[Q_1,R_1]$, which in general contains modular forms with different multiplier systems, always has a canonical Rankin-Cohen structure. In Section~\ref{sec:examples} we discuss in detail the case of the non-arithmetic group~$\Delta(2,5,\infty)$, for which Theorem~\ref{thm:hymain} gives two systems of ODEs. These, together with the restriction of Hilbert modular forms attached to~$\Q(\sqrt{5})$, permit us to construct the full space of twisted modular forms on~$\Delta(2,5,\infty)$, and attach to it a new system of ODEs describing the action of~$d/d\tau$.

\section{Rankin-Cohen structures and systems of nonlinear ODEs}\label{section preliminaries}

\subsection{Rankin-Cohen brackets and Rankin-Cohen structures}
\label{sec:rc}
In this section we recall some facts and terminologies from~\cite{zag94}.  Given a Fuchsian group of the first kind~$\Gamma$, we denote the graded algebra of modular forms and quasimodular forms on~$\Gamma$ respectively by~$\mfs(\Gamma)=\bigoplus_{k=0}^\infty{\mfs_k(\Gamma)}$ and~$\qmfs(\Gamma)=\bigoplus_{k=0}^\infty{\qmfs_k(\Gamma)}$ .


Let~$f\in\mfs_{k}(\Gamma)$ and~$g\in\mfs_{l}(\Gamma)$. Cohen~\cite{coh77} proved that for every~$n\in\N$ the bracket
\begin{equation}\label{eq rcb mf}
[f,g]_n:=\sum_{r+s=n}(-1)^j\frac{(k+n-1)_s}{s!}\frac{(l+n-1)_r}{r!}f^{(r)}g^{(s)} \,,
\end{equation}
where~$f^{(r)}=d^rf/d\tau^r$ and~$(\alpha)_s=\alpha(\alpha+1)\cdots(\alpha+s-1)$,
is a modular form of weight~$k+l+2n$. The bilinear operators~$[\,,]_n$, called~\emph{Rankin-Cohen brackets}, satisfy a number of algebraic identities, e.g.,
\begin{align*}
[f,g]_n-(-1)^n[g,f]_n&\=0\,,\\
[[f,g]_1,h]_1+[[g,h]_1,f]_1+[[h,f]_1,g]_1&\=0\,,\\
[[g,h]_0,f]_2-[[h,f]_0,g]_2+[[g,h]_2,f]_0-[[h,f]_2,g]_0&\=[[f,g]_1,h]_1\,.
\end{align*}
The second relation above shows in particular that~$(\mfs(\Gamma),[\,,]_1)$ is a Lie algebra. More identities can be found in~\cite{zag94}.
Let~$K$ be a field of characteristic zero. Abstracting from the modular case, we call a~\emph{Rankin-Cohen structure} (RC structure) any graded~$K$-vector space~$M=\bigoplus_{k\ge0}{M_{k}}$, ($\dim M_k<\infty$ for every~$k\ge0$) equipped with bilinear operations~$[\,,]_n\colon M_k\otimes M_l\to M_{k+l+2n}$ that satisfy all the algebraic identities satisfied by the Rankin-Cohen brackets.

\begin{rem}
\label{rem:RC}{\rm
In the following, we will consider RC algebras with grading~$k/N\,,k\in\Z_{\ge0}$, for a fixed positive integer~$N$. As will be clear, this causes no problems with the usual definitions and results.
}
\end{rem}

Given a commutative graded algebra~$M$ together with a derivation of degree two, i.e. a map~$D\colon M\to M$ with~$D(M_k)\subset M_{k+2}$ that satisfy Leibniz's rule, one can construct an RC algebra out of it: simply define the bilinear operators~$[\,,]_{D,n}$ on~$M$ as in~\eqref{eq rcb mf} where~$f^{(j)}$ has to be interpreted as~$D^j{f}$ (the same for~$g$). Such an RC algebra is called~\emph{standard RC algebra}. The prototypical example is the RC algebra of quasimodular form~$\qmfs(\Gamma)$ for every Fuchsian group~$\Gamma$, where~$D=d/d\tau$.

The following proposition of Zagier (Proposition 1 in~\cite{zag94}) gives yet another way to construct an RC algebra from a commutative associative algebra with a derivation (see also the version of this result discussed in~\cite{younes4}).
\begin{prop}
\label{prop crcab}
Let~$M$ be a commutative and associative graded~$K$-algebra with a derivation~$\partial\colon M_*\to M_{*+2}$ of degree~$2$, and let~$\Phi\in M_4$. Define brackets~$[,]_{\partial,\Phi,n}$ on~$M$ by
\begin{equation}
\label{eqn:rccan}
[f,g]_{\partial,\Phi,n}\:=\sum_{r+s=n}{(-1)^r\frac{(k+n-1)_s}{s!}\frac{(l+n-1)_r}{r!}f_rg_s}\,\in M_{k+l+2n}
\end{equation}
where~$f_r\in M_{k+2r}, g_s\in M_{l+2s} (r,s\ge0)$ are defined recursively by
\[
f_{r+1}=\partial f_r+r(r+k-1)\Phi f_{r-1}\,,\quad g_{s+1}=\partial g_s+s(s+k-1)\Phi g_{s-1}
\]
with initial conditions~$f_0=f,g_0=g$. Then~$(M,[\,,]_{\partial,\Phi,n})$ is an~$RC$ algebra.
\end{prop}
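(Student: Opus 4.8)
The plan is to obtain the Rankin--Cohen identities for $(M,[\,,]_{\partial,\Phi,n})$ from the corresponding identities for a genuine \emph{standard} RC algebra, by realising $M$ as a subalgebra stable under the brackets. To this end I would enlarge $M$ to the graded polynomial algebra $\widetilde M:=M[Y]$, with $Y$ an indeterminate of degree $2$ (working in $\hat M[Y]$, in the sense of Remark~\ref{rem alg dep}, if the generators of $M$ are dependent), and extend $\partial$ to a degree-$2$ derivation $D$ of $\widetilde M$ by setting
\[
  Df\;:=\;\partial f+kYf\quad(f\in M_k),\qquad DY\;:=\;Y^{2}+\Phi .
\]
Because $f\mapsto\partial f+(\deg f)\,Yf$ is already a derivation $M\to\widetilde M$, this prescription does define a derivation $D$, and it has degree $2$; hence, by the construction of standard RC algebras recalled just before the statement, $\bigl(\widetilde M,[\,,]_{D,n}\bigr)$ is an RC algebra.

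The technical core is the closed formula
\[
  D^{r}f\;=\;\sum_{j=0}^{r}\binom{r}{j}\,(k+r-j)_j\,f_{r-j}\,Y^{\,j}\qquad(f\in M_k,\ r\ge 0),
\]
which I would establish by induction on $r$, substituting $Df_m=\partial f_m+(k+2m)Yf_m$, the recursion of the statement in the form $\partial f_m=f_{m+1}-m(m+k-1)\Phi f_{m-1}$, and $DY=Y^2+\Phi$. In the inductive step two things have to be checked: that all the $\Phi$-terms cancel in pairs, and that the surviving terms reassemble into the same formula with $r+1$ in place of $r$; both amount to one-line identities for binomial and Pochhammer symbols (essentially Pascal's rule). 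The $\Phi$-cancellation is precisely what forces the coefficient $m(m+k-1)$ in the recursion and the value $DY=Y^2+\Phi$; this is the step I expect to be the real work, and it is where a less structured attempt would stall. Summing the formula against $X^{r}/\bigl(r!\,(k)_r\bigr)$ and using $(k)_{i+j}=(k)_i(k+i)_j$ yields the pleasant generating-function identity
\[
  \widehat f(X)\;:=\;\sum_{r\ge0}\frac{D^{r}f}{r!\,(k)_r}\,X^{r}\;=\;e^{XY}\,\mathfrak f(X),\qquad
  \mathfrak f(X)\;:=\;\sum_{r\ge0}\frac{f_{r}}{r!\,(k)_r}\,X^{r}\ \in\ M[\![X]\!] .
\]

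To conclude I would use the Cohen--Kuznetsov description of the brackets: expanding $\widehat h_1(X)\widehat h_2(-X)$ and comparing with \eqref{eqn:rccan} shows that, in any commutative algebra with a degree-$2$ derivation, $[h_1,h_2]_n$ equals a fixed nonzero scalar (depending only on $n$ and on the degrees $k,l$ of $h_1,h_2$) times the coefficient of $X^{n}$ in $\widehat h_1(X)\widehat h_2(-X)$ --- and the very same identity holds with $D^{r}h_i$ replaced by \emph{any} sequences, in particular for the canonical series $\mathfrak f,\mathfrak g$ and the canonical brackets $[\,,]_{\partial,\Phi,n}$. Now apply this in $\widetilde M$ to $f\in M_k$, $g\in M_l$: by the factorization above,
\[
  \widehat f(X)\,\widehat g(-X)\;=\;e^{XY}\mathfrak f(X)\cdot e^{-XY}\mathfrak g(-X)\;=\;\mathfrak f(X)\,\mathfrak g(-X)\ \in\ M[\![X]\!],
\]
so the auxiliary variable $Y$ disappears identically. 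Hence $[f,g]_{D,n}$ lies in $M_{k+l+2n}$, and by the previous sentence it coincides with $[f,g]_{\partial,\Phi,n}$. (This is the abstract form of Cohen's theorem that a Rankin--Cohen bracket of modular forms is again modular.) Consequently $M$ is a graded subalgebra of $\widetilde M$ stable under every bracket $[\,,]_{D,n}$, and the induced structure on it is exactly $(M,[\,,]_{\partial,\Phi,n})$; since each Rankin--Cohen identity is a multilinear relation that already holds in $\bigl(\widetilde M,[\,,]_{D,n}\bigr)$, it holds in $(M,[\,,]_{\partial,\Phi,n})$, which is what we wanted. The caveat of Remark~\ref{rem alg dep} is harmless throughout, since every computation above is an identity of universal polynomials and thus survives passage to $\hat M$.
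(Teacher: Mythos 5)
Your proposal is correct and follows exactly the route the paper takes (after Zagier, Prop.~1 of \emph{Modular forms and differential operators}): adjoin a degree-$2$ element $\phi=Y$, extend $\partial$ to the derivation $D f=\partial f+k\phi f$, $D\phi=\Phi+\phi^{2}$ on $\widetilde M=M[\phi]$, and identify the restriction of the standard brackets of $(\widetilde M,D)$ to $M$ with the canonical brackets $[\,,]_{\partial,\Phi,n}$. Your closed formula for $D^{r}f$ and the Cohen--Kuznetsov factorization $\widehat f(X)=e^{XY}\mathfrak f(X)$ correctly supply the cancellation of the $Y$-terms that the paper leaves implicit.
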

It is worth mentioning how the proof of this result works. One considers an element~$\phi\notin M$ of degree~$2$ and an embedding~$M\subset\widetilde{M}:=M[\phi]$. Equipped with the derivation
\begin{equation}
\label{eqn:extD}
D(f)\:=\partial(f)+k\phi f\in \widetilde{M}_{k+2}\,,\quad D(\phi):=\Phi+\phi^2\in \widetilde{M}_4
\end{equation}
the space~$\widetilde{M}$ is a standard RC algebra, and the restriction of its brackets to~$M$ gives precisely the ones defined in~\eqref{eqn:rccan}.
An RC structure like the one described in Proposition~\ref{prop crcab} is called~\emph{canonical RC structure}, and the derivation~$\partial$ is called a~\emph{Ramanujan-Serre derivation}. The proof of Proposition~\ref{prop crcab} shows in particular that every canonical RC algebra embeds in a standard RC algebra.
The main example of a canonical~RC structure is the RC structure on the space of modular forms~$\mfs(\Gamma)$ for any non-compact Fuchsian group~$\Gamma$. In particular, when~$\Gamma=\SL2$, we have, for~$f\in M_k(\SL2)$,
\[
\rsdo f=f'-\frac{k}{12}E_2f=-\frac{E_6}{3}\frac{\partial f}{\partial E_4}-\frac{E_4^2}{2}\frac{\partial f}{\partial E_6}\,
\]
and~$\Phi=-E_4/144,\phi=E_2/12$.

We close this section with an observation on the standard RC algebra~$\qmfs(\Gamma)=\mfs(\Gamma)[\phi]$. Besides~$D=d/d\tau$, we have two further derivations: the multiplication by the weight~$W$, i.e. $Wf=kf$ if~$f\in\qmfs_k(\Gamma)$, and~$\delta=\partial/\partial\phi$. They define an $\sl2$-module structure on~$\qmfs(\Gamma)$, i.e.,
\[
[\cvf,D]=\rvf \ , \ \ [\rvf,D]=2D \ , \ \ [\rvf,\cvf]=-2\cvf
\]
(see~\cite[Part I, Chapter 5]{bhgz}).
\subsection{Ramanujan systems of Rankin-Cohen type}\label{section ??}

\begin{theo} \label{thm 1}
Let~$D$ be a derivation on a graded algebra over a field~$K$ of characteristic zero. Consider the following system of nonlinar ODEs
\begin{equation} \label{eq mods}
\left\{
  \begin{array}{ll}
    D{\vs_1}=\vs_1^2+\prs_1(\vs_2,\vs_3,\ldots,\vs_\nvs)  \\\\
    D{\vs_2}={\wss_2}\vs_1\vs_2+\prs_2(\vs_2,\vs_3,\ldots,\vs_\nvs)  \\\\
    D{\vs_3}={\wss_3}\vs_1\vs_3+\prs_3(\vs_2,\vs_3,\ldots,\vs_\nvs)  \\
    \vdots  \\
    D{\vs_\nvs}={\wss_\nvs}\vs_1\vs_\nvs+\prs_\nvs(\vs_2,\vs_3,\ldots,\vs_\nvs)
  \end{array}
\right.\,,
\end{equation}
where~$\vs_j$ is of degree $\wss_j\in\Q\,,\wss_1=2$, and~$\prs_j(\vs_2,\vs_3,\ldots,\vs_\nvs)\in K[\vs_2,\dots,\vs_d]$ is a quasi-homogeneous polynomial of degree~$\wss_j+2$.
\begin{enumerate}
\item The algebra~$K[\vs_1,\dots,\vs_d]$ is a standard RC algebra with derivation~$D$, and the sub-algebra~$K[\vs_2,\dots,\vs_d]$ is a canonical RC algebra.
\item Conversely, every finitely generated canonical RC algebra arises in this way.
\end{enumerate}
\end{theo}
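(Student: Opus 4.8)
The plan is to read the system~\eqref{eq mods} as the coordinate form of the extended derivation~\eqref{eqn:extD} from the proof of Proposition~\ref{prop crcab}, with the auxiliary degree-$2$ element~$\phi$ there playing the role of~$t_1$ here, and then to invoke that proposition in both directions. Throughout I work with Remark~\ref{rem:RC} in force (a common denominator~$N$ of the~$w_j$ rescales the grading to~$\Z_{\ge 0}$) and with Remark~\ref{rem alg dep} (so that~$K[t_1,\dots,t_d]$ may be regarded as a polynomial ring after quotienting by the ideal of relations); note that the given derivation~$D$ automatically descends to this quotient, being well defined on the ambient algebra and preserving the subalgebra generated by~$t_1,\dots,t_d$ by~\eqref{eq mods}.

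For part~(1), put~$M:=K[t_2,\dots,t_d]$ with~$t_j$ of degree~$w_j$, and~$\widetilde M:=K[t_1,\dots,t_d]=M[t_1]$, with~$t_1$ a free variable over~$M$ of degree~$2$. For quasi-homogeneous~$f\in M_k$ the Euler identity~$\sum_{j\ge 2}w_j t_j\,\partial f/\partial t_j=kf$, combined with the chain rule and~\eqref{eq mods}, gives
\[
D(f)=\sum_{j\ge 2}\frac{\partial f}{\partial t_j}\bigl(w_j\,t_1 t_j+P_j\bigr)=k\,t_1 f+\partial(f)\,,\qquad \partial(f):=\sum_{j\ge 2}\frac{\partial f}{\partial t_j}\,P_j\in M_{k+2}\,.
\]
Thus~$\partial$ is a degree-$2$ derivation of~$M$ with~$\partial(t_j)=P_j$, and~$\Phi:=D(t_1)-t_1^2=P_1\in M_4$. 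Consequently the restriction of~$D$ to~$\widetilde M=M[t_1]$ is exactly the derivation~\eqref{eqn:extD} associated to the pair~$(\partial,\Phi)$; by Proposition~\ref{prop crcab} and the description of its proof, $(\widetilde M,D)$ is then a standard RC algebra and the restriction of its brackets to~$M$ is the canonical RC structure~\eqref{eqn:rccan} determined by~$(\partial,\Phi)$. This establishes both assertions of~(1).

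For part~(2), let~$(M,[\,,]_{\partial,\Phi,n})$ be a finitely generated canonical RC algebra with Ramanujan-Serre derivation~$\partial\colon M_*\to M_{*+2}$ and~$\Phi\in M_4$. Choose homogeneous algebra generators~$t_2,\dots,t_d$ of~$M$, of degrees~$w_2,\dots,w_d$, and identify~$M$ with~$K[t_2,\dots,t_d]$ modulo relations (Remark~\ref{rem alg dep}). Since each~$M_\ell$ is spanned by the degree-$\ell$ monomials in the~$t_j$, the elements~$\partial(t_j)\in M_{w_j+2}$ and~$\Phi\in M_4$ are represented by quasi-homogeneous polynomials~$P_j$ of degree~$w_j+2$ and~$P_1$ of degree~$4$, respectively. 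Adjoin a degree-$2$ variable~$t_1$, set~$w_1:=2$, and let~$D$ be defined by~\eqref{eq mods} with these~$P_j$: this is a system of the required form. By part~(1) it equips~$M=K[t_2,\dots,t_d]$ with a canonical RC structure whose Ramanujan-Serre derivation~$\partial'$ and element~$\Phi'\in M_4$ satisfy~$\partial'(t_j)=P_j=\partial(t_j)$ and~$\Phi'=P_1=\Phi$. Hence~$\partial'=\partial$ (two derivations agreeing on generators), and since the brackets~\eqref{eqn:rccan} depend only on the pair~$(\partial,\Phi)$, the two canonical RC structures coincide; so~$M$ arises from a system of the form~\eqref{eq mods}.

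I do not expect a genuine obstacle once~\eqref{eq mods} has been matched with~\eqref{eqn:extD}: the argument is essentially Proposition~\ref{prop crcab} read both forwards and backwards. The points demanding care are purely formal — checking that~$D$ really is a degree-$2$ derivation of the possibly non-free algebras~$M$ and~$\widetilde M$ (handled by the Euler identity together with Remarks~\ref{rem:RC} and~\ref{rem alg dep}), the degree bookkeeping forced by~$w_1=2$ and by~$P_j$ being quasi-homogeneous of degree~$w_j+2$, and, in part~(2), the finiteness hypothesis~$\dim_K M_k<\infty$, which guarantees finitely many generators (of positive degree) and hence that~$\partial(t_j),\Phi$ admit polynomial representatives of the stated degrees.
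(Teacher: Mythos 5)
Your proposal is correct and follows essentially the same route as the paper: part (1) is Proposition~\ref{prop crcab} applied to $\partial t_j:=Dt_j-{\sf w}_jt_1t_j$ and $\Phi:=\prs_1$, and part (2) is the embedding $M\subset M[\phi]$ with the extended derivation~\eqref{eqn:extD} read off as a system of the form~\eqref{eq mods}. Your Euler-identity computation is just a more explicit verification that $D$ restricted to $M[t_1]$ coincides with~\eqref{eqn:extD}, which the paper leaves implicit.
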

\begin{proof}
\begin{enumerate}[left =0pt]
\item To check that~$\widetilde{M}:=K[t_1,\dots,t_d]$ is a standard RC algebra, it is enough to notice that~$D$ is a degree two derivation on~$\widetilde{M}$ and apply the construction described after Remark~\ref{rem:RC}.

The algebra~$M$ generated over~$K$ by~$\vs_2,\dots,\vs_d$ is naturally a commutative and associative graded algebra. It has a derivation~$\partial\colon M\to M$ of weight two defined on the generators by
\[
\partial\vs_j\:=D\vs_j-\wss_j\vs_j\vs_1\=\prs_j(\vs_2,\vs_3,\ldots,\vs_\nvs)\in M_{\wss_j+2}\,,\quad j\ge 2\,.
\]
Set~$\Phi:=\prs_1(\vs_2,\dots,\vs_d)$. Proposition~\ref{prop crcab} applies and~$(M,[\,,]_{\partial,\Phi,n})$ is a canonical RC structure.

\item Let $(M,[\cdot , \cdot]_n)$ be a canonical RC algebra with derivation~$\partial$ and let~$\Phi\in M_4$. Assume that~$M$ is generated over~$K$ by~$\vs_2,\dots,\vs_d$. As explained after Proposition~\ref{prop crcab}, we can embed~$M$ into a standard RC algebra~$\widetilde{M}=M[\phi]$, for some degree-two element~$\phi\notin M$, with derivation~$D$ given in~\eqref{eqn:extD}. Then if we define~$\prs_j(\vs_2,\dots,\vs_d):=\partial\vs_j\in M_{\wss_j+2},\,j\ge2$ we have
\[
D\vs_j\=\wss_j\vs_j\phi+\prs_j(\vs_2,\dots,\vs_d)\,,\quad j\ge2\,.
\]
Setting~$\prs_1(\vs_2,\dots,\vs_d):=\Phi$, we get, again from~\eqref{eqn:extD}, that~$D\phi=\phi^2+\prs_1(\vs_2,\dots\vs_d)$. It finally follows that~$M$ is generated by the solutions of system~\eqref{eq mods} with~$\vs_1:=\phi$.
\end{enumerate}
\red{For the case that $t_1,t_2,\ldots,t_d$ are not algebraically independent over $K$ see Remark \ref{rem alg dep}.}
\end{proof}

We call the systems of the form~\eqref{eq mods}~\emph{Ramanujan systems of Rankin-Cohen type} (RRC~system for short).
The next result gives another characterization of {\mods}: they are precisely the systems whose solution space can be endowed with an $\sl2$-module structure.
Before stating the theorem recall that we can denote a differential operator $R$ on $\widetilde{M}=M[\vs_1]$ by $R=\sum_{i=1}^{\nvs} R_i \frac{\partial}{\partial \vs_i}$, where $R_i=R \, \vs_i,\ i=1,2,\ldots,\nvs$. If $S=\sum_{i=1}^{\nvs} S_i \frac{\partial}{\partial \vs_i}$ is another differential operator, then the Lie bracket $[R,S]$ is given as follows:
\begin{equation}\label{eq Lie brack}
  [R,S]=R S-S R=\sum_{i=1}^{\nvs}\big( R(S_i)-S(R_i)\big) \frac{\partial}{\partial \vs_i}.
\end{equation}

\begin{theo} \label{thm 2}
A finitely generated RC algebra $(M,[\,,]_n)$ is a canonical RC algebra if and only if it is a sub-RC algebra of a standard RC algebra $(\widetilde{M}:=M[\vs_1],[\cdot,\cdot]_{D,\ast})$ with the following property: the derivation $D$ along with the weight operator $\rvf$ ($\rvf f=k f\,,\,f\in \widetilde{M}_k$) and a derivation~$\cvf$ satisfying~$\ker  \cvf=M$, endows $\widetilde{M}$ with an $\sl2$-module structure.
\end{theo}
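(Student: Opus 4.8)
The plan is to prove both implications by unwinding the two canonical constructions already set up in Section~\ref{sec:rc}, keeping track of how the extra derivation $\cvf=\partial/\partial\vs_1$ interacts with $D$ and $\rvf$.

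\textbf{($\Rightarrow$)} Suppose $(M,[\,,]_n)$ is a finitely generated canonical RC algebra, with Ramanujan-Serre derivation $\partial$ and element $\Phi\in M_4$. By the discussion after Proposition~\ref{prop crcab}, $M$ embeds into the standard RC algebra $\widetilde M=M[\vs_1]$, where $\vs_1=\phi$ has degree $2$ and the derivation is $D(f)=\partial f+k\phi f$ for $f\in M_k$, $D(\phi)=\Phi+\phi^2$. The first task is to write down the three operators on $\widetilde M$: $D$ as above; the weight operator $\rvf$; and $\cvf:=\partial/\partial\vs_1$, the $K[\vs_2,\dots,\vs_d]$-linear derivation sending $\vs_1\mapsto 1$ and killing all other generators. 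Then $\ker\cvf=M$ is immediate since $\widetilde M=M[\vs_1]$ is a polynomial ring over $M$. It remains to verify the three $\sl2$ relations $[\cvf,D]=\rvf$, $[\rvf,D]=2D$, $[\rvf,\cvf]=-2\cvf$. The last two are formal: $D$ raises degree by $2$, so $[\rvf,D]=2D$; and $\cvf$ lowers degree by $2$, so $[\rvf,\cvf]=-2\cvf$. For $[\cvf,D]=\rvf$ one checks it on generators. On $f\in M_k$: $\cvf D f=\cvf(\partial f+k\phi f)=kf$ (using $\cvf f=0$, $\cvf\phi=1$), while $D\cvf f=0$, so $[\cvf,D]f=kf=\rvf f$. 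On $\phi$: $\cvf D\phi=\cvf(\Phi+\phi^2)=2\phi$ (using $\cvf\Phi=0$), while $D\cvf\phi=D(1)=0$, so $[\cvf,D]\phi=2\phi=\rvf\phi$. Since all three operators are derivations and the relations are linear in each, they extend from generators to all of $\widetilde M$. This is essentially the computation referenced in the excerpt via \cite[Part I, Chapter 5]{bhgz}, now done abstractly.

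\textbf{($\Leftarrow$)} Suppose $\widetilde M=M[\vs_1]$ is a standard RC algebra with derivation $D$ such that $D$, $\rvf$, and some derivation $\cvf$ with $\ker\cvf=M$ generate an $\sl2$-action. We must produce a Ramanujan-Serre derivation on $M$ and an element $\Phi\in M_4$ exhibiting the given RC structure on $M$ as canonical. The key is to extract, from the $\sl2$ relations, the precise shape~\eqref{eqn:extD}. Writing $D=\sum_{i=1}^d R_i\,\partial/\partial\vs_i$ with $R_i=D\vs_i\in\widetilde M_{\wss_i+2}$, the relation $[\rvf,D]=2D$ forces each $R_i$ to be quasi-homogeneous of degree $\wss_i+2$ — no new information, just consistency. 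The relation $[\cvf,D]=\rvf$ is the crucial one. Normalizing $\vs_1$ so that $\cvf=\partial/\partial\vs_1$ (possible because $\ker\cvf=M$ has codimension encoded by $\vs_1$; here one uses that $\cvf\vs_1$ is a nonzero element of $M$, which after rescaling $\vs_1$ can be taken to be $1$, and $\wss_1=2$ follows from $[\rvf,\cvf]=-2\cvf$), one computes $\cvf D\vs_i-D\cvf\vs_i=\cvf R_i=\wss_i\vs_i$. For $i\ge 2$ this says $\partial R_i/\partial\vs_1=\wss_i\vs_i$, so $R_i=\wss_i\vs_i\vs_1+\prs_i$ with $\prs_i\in M$; for $i=1$ it gives $\partial R_1/\partial\vs_1=2\vs_1$, i.e. $R_1=\vs_1^2+\prs_1$ with $\prs_1\in M$. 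Thus $\widetilde M$ carries exactly the data of an RRC system~\eqref{eq mods}, and by Theorem~\ref{thm 1}(1) the subalgebra $M=K[\vs_2,\dots,\vs_d]=\ker\cvf$ is a canonical RC algebra with $\partial\vs_i=\prs_i$, $\Phi=\prs_1$. Finally one checks that the canonical RC brackets on $M$ produced this way agree with the originally given brackets $[\,,]_n$: both arise as the restriction to $M$ of the standard brackets $[\,,]_{D,\ast}$ on $\widetilde M$ — for the given structure by hypothesis, for the constructed one by the proof of Proposition~\ref{prop crcab} — hence they coincide.

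\textbf{Main obstacle.} The delicate point is the $(\Leftarrow)$ direction, specifically justifying that one may choose the generator $\vs_1$ so that $\cvf$ becomes literally $\partial/\partial\vs_1$ and simultaneously has the right degree. A priori the hypothesis only gives \emph{some} derivation $\cvf$ with $\ker\cvf=M$; one must argue that $\widetilde M$, being generated over $M$ by one more element and carrying a locally nilpotent-looking derivation $\cvf$ (locally nilpotent because of the $\sl2$ relations, which make $\cvf$ the lowering operator of a rational representation), is a polynomial ring $M[\vs_1]$ on which $\cvf=\partial/\partial\vs_1$ after normalization. Equivalently: $\cvf\vs_1\in M\setminus\{0\}$ and is a unit or can be absorbed. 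In the intended setting $\widetilde M=M[\vs_1]$ is given as a polynomial extension from the start (this is how standard RC algebras over canonical ones are built), so $\cvf\vs_1$ is forced to be a nonzero constant by degree reasons combined with $[\rvf,\cvf]=-2\cvf$ and $\wss_1=2$; rescaling $\vs_1$ makes it $1$. Spelling out this normalization carefully — and noting, as in Remark~\ref{rem alg dep}, that if the $\vs_j$ satisfy relations one passes to $\hat K[\vs_1,\dots,\vs_d]$ without affecting the argument — is the part that needs the most care; everything else is the formal derivation bookkeeping above.
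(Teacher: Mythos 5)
Your proof is correct and follows essentially the same route as the paper's: for the forward direction one sets $\cvf=\pm\partial/\partial\vs_1$ on $\widetilde M=M[\vs_1]$ and checks the $\sl2$ relations on generators, and for the converse one uses $\ker\cvf=M$ together with $[\rvf,\cvf]=-2\cvf$ to write $\cvf=\cvf_1\,\partial/\partial\vs_1$ with $\cvf_1$ of degree zero, reads off the RRC shape of $D$ from the bracket of $\cvf$ with $D$, and invokes Theorem~\ref{thm 1}. The one normalization point you flag is resolved in the paper by a case split: if every generator of $M$ has degree zero then $\cvf_1$ need not be constant, but then $M$ is $D$-stable and hence directly a standard (so canonical) RC algebra, while if some $\wss_j\neq 0$ the identity $-\cvf_1\,\partial(D\vs_j)/\partial\vs_1=\wss_j\vs_j$ forces $\cvf_1$ to be a unit in $K$, which is exactly the ``degree reasons'' step you describe.
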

\begin{proof}
We first suppose that $(M,[\,,]_n)$ is a canonical RC algebra. As in Theorem \ref{thm 1}, $(M,[\,,]_n)$ is a sub-RC algebra of a standard RC algebra~$(\widetilde{M}=M[\vs_1],[\,,]_{D,n})$ with derivation~$D=\sum_{j=1}^{\nvs}D{\vs_j}\frac{\partial}{\partial \vs_j}$, where $\vs_1,\dots,\vs_d$ are as in system  \eqref{eq mods}.
If we set~$\rvf=\sum_{j=1}^{\nvs} \wss_j\vs_j\frac{\partial}{\partial \vs_j}$ and~$\cvf=-\frac{\partial}{\partial \vs_1}$, by direct computations one checks that~$[D,\cvf]=\rvf$ and $[\rvf,\cvf]=-2\cvf$. Finally, the computation
\[
[\rvf,D](f)=\rvf\big(D(f)\big)-D\big(\rvf(f)\big)=(k+2)D(f)-kD(f)=2D(f)\,,\quad f\in\widetilde{M}_k\,,
\]
proves that~$[\rvf,D]=2D$.

In order to prove the sufficient condition of the theorem, let $\{\vs_2,\vs_3,\ldots,\vs_\nvs\}$ be a set of generators of $M$ with $\vs_j\in M_{\wss_j}$, and set $\wss_1:=2$. The weight operator is then of the form~$\rvf=\sum_{j=1}^{\nvs} \wss_j\vs_j\frac{\partial}{\partial \vs_j}$. The relation $[\rvf,\cvf]=-2\cvf$ implies that $\cvf$ decreases the degree of any homogeneous element of $\widetilde{M}$ by two; hence it is of the form~$\cvf=\sum_{j=1}^{\nvs} \cvf_j\frac{\partial}{\partial \vs_j}$ where~$\cvf_j=\cvf(\vs_j)\in \widetilde{M}_{\wss_j-2}, \ j=1,2,\ldots,\nvs$. On the other hand, the hypothesis~$\ker \cvf=M$ implies that~$\cvf_j=0, \ j=2,3,\ldots,\nvs$, and therefore~$\cvf=\cvf_1\frac{\partial}{\partial \vs_1}, \ \cvf_1\in \widetilde{M}_0$. If we write $D=\sum_{j=1}^{\nvs} D_j\frac{\partial}{\partial \vs_j}$ where $D_j=D \vs_j\in \widetilde{M}_{\wss_j+2}$, $j=1,2,\ldots,\nvs$, then $[D,\cvf]=\rvf$ yields:
\begin{equation}\label{eq [D,cvf]=rvf j}
D(\cvf_1)-\cvf_1\frac{\partial D_1}{\partial \vs_1}=2\vs_1\,,\quad -\cvf_1\frac{\partial D_j}{\partial \vs_1}=\wss_j\vs_j, \ \ j=2,3,\ldots, \nvs.
\end{equation}
We consider the following two cases.
\begin{description}[left=0pt]
  \item[Case 1]
If $M$ does not contain any element of non-zero degree, i.e., $\wss_j=0$ for all $2\leq j \leq \nvs$, then \eqref{eq [D,cvf]=rvf j} implies that $\frac{\partial D_j}{\partial \vs_1}=0, \ j=2,3,\ldots,\nvs$. Hence, $M$ is closed under $D$, from which it follows that $(M,[\,,])$ is an standard RC algebra, and consequently it is a canonical RC algebra with $\rsdo=D$ and $\ew4=0$.
\item[Case 2]
If there exists $2\leq j \leq \nvs$ such that $\wss_j\neq 0$, then \eqref{eq [D,cvf]=rvf j} implies that $\cvf_1$ must be a constant element, i,e., $\cvf_1\in K$; without loss of generality we can assume~$\cvf=-\frac{\partial}{\partial \vs_1}$.
Then~\eqref{eq [D,cvf]=rvf j} imply that~$D_1=\vs_1^2+\prs_1(\vs_2,\vs_3,\ldots,\vs_\nvs)$ and~$D_j=\wss_j\vs_1\vs_j+\prs_j(\vs_2,\vs_3,\ldots,\vs_\nvs)\,,j=2,\ldots,\nvs,$ for some~$\prs_j(\vs_2,\vs_3,\ldots,\vs_\nvs)\in M_{\wss_j+2}$. Therefore, $\vs_1,\dots,\vs_\nvs$ satisfy an RRC system \eqref{eq mods} and Theorem \ref{thm 1} implies that $(M,[\,, ]_n)$ is a canonical RC algebra.
\end{description}
\end{proof}

\begin{rem}
\label{rmk:Delta}{\rm
In certain cases, one can read the RC structure directly from system~\eqref{eq mods}. Let~$(M,[\,,]_n)$ be an RC algebra, and assume that there exists a homogeneous element~$F\in M_\wss$ that is not a zero divisor and such that~$[F,M]_1\subset M\cdot F$ and~$[F,F]_2\in M\cdot F^2$. Then if we set
\[
\Phi\:=\frac{[F,F]_2}{\wss^2(\wss+1)F^2}\in M_4\,,
\]
and define for every~$f\in M_k$
\[
\rsdo f:=\frac{[F,f]_1}{\wss F}\in{M}_{k+2}
\]
the brackets~$[\,,]_n$ of~$M$ can be realized as~$[\,,]_{\rsdo,\Phi,n}$ (see~\eqref{eqn:rccan} for the definition, and~\cite[\S 6]{zag94} for a proof of this fact). It follows in particular from the proof of Theorem~\ref{thm 1} that the RRC system~\eqref{eq mods} is of the form
\begin{equation} \label{eq mods2}
\left\{
  \begin{array}{ll}
    \dot{\vs_1}=\vs_1^2+ \frac{[\red{F,F}]_2}{\wss^2(\wss+1)F^2} \\\\
    \dot{\vs_2}={\wss_2}\vs_1\vs_2+\frac{[F,\vs_2]_1}{\wss F}  \\\\
    \dot{\vs_3}={\wss_3}\vs_1\vs_3+\frac{[F,\vs_3]_1}{\wss F}  \\
    \vdots  \\
    \dot{\vs_\nvs}={\wss_\nvs}\vs_1\vs_\nvs+\frac{[F,\vs_\nvs]_1}{\wss F}
  \end{array}
\right.\,.
\end{equation}
As explained in~\cite{zag94}, the knowledge of the~$0$-th bracket (i.e. multiplication in~$M$), of the first bracket of a fixed element~$F$ with all elements of~$M$, and of the bracket~$[F,F]_2$ is enough to determine all other brackets. It follows that the \red{whole} RC structure can be read from system~\eqref{eq mods2}. For instance, the classical Ramanujan system~\eqref{eq ramanujan} is of the above form for~$F=\Delta=(Q^3-R^2)/1728$. We have in fact~$\wss=12$ and
\[
[\Delta, Q]=-4R\Delta\,,\quad [\Delta, R]=-6R\Delta\,,\quad [\Delta,\Delta]_2=-13Q\Delta^2\,.
\]
We generalize this observation to triangle groups in Corollary~\ref{cor:RC}.

In the case one of the conditions  $[F,F]_2\in M\cdot F^2$ or $[F,M]_1\subset M\cdot F$ does not hold, we can substitute $M$ by $\widehat{M}:=M\left[\vs_{\nvs+1}\right]$, where $\vs_{\nvs+1}=\frac{1}{F}$, and set $\widetilde{M}:=\widehat{M}[\vs_1]$. It is evident that $\widehat{M}$ is generated by $\vs_2,\vs_3,\ldots,\vs_\nvs,\vs_{\nvs+1}$ and we find $\dot{\vs_{\nvs}}_{+1}=-w\vs_1\vs_{\nvs+1}$. Thus, in this case the {\mods} is as follows
\begin{equation} \label{eq mods4}
\left\{
  \begin{array}{ll}
    \dot{\vs_1}=\vs_1^2+ \frac{[\red{F,F}]_2}{\wss^2(\wss+1)}\vs_{\nvs+1}^2 \\\\
    \dot{\vs_2}={\wss_2}\vs_1\vs_2+\frac{[F,\vs_2]_1}{\wss}\vs_{\nvs+1}  \\\\
    \dot{\vs_3}={\wss_3}\vs_1\vs_3+\frac{[F,\vs_3]_1}{\wss}\vs_{\nvs+1}  \\
    \vdots  \\
    \dot{\vs_\nvs}={\wss_\nvs}\vs_1\vs_\nvs+\frac{[F,\vs_\nvs]_1}{\wss}\vs_{\nvs+1}\\\\
     \dot{\vs_{\nvs}}_{+1}={-\wss}\vs_1\vs_{\nvs+1}
  \end{array}
\right.\,.
\end{equation}
}
\end{rem}



\section{RRC systems attached to hyperbolic triangles}
\label{sec:notation}

In this section we attach RRC systems to certain triangle groups, or equivalently, to the hypergeometric differential equation (HGDE for short)
\begin{equation}
\label{eqn:hgde}
z(1-z)\frac{d^2y}{dz^2}\+(\gamma-(\alpha+\beta+1)z)\frac{dy}{dz}\-\alpha\beta y\=0
\end{equation}
for certain choices of the parameters~$\alpha,\beta,\gamma$.

The relation between HGDE and systems of nonlinear differential equations has been investigated in full generality by Ohyama~\cite{ohy96}. Consider the projectively equivalent form (the~\emph{$Q$-form}) of the hypergeometric differential equation~$\frac{dy^2}{dz^2}-\mathcal{Q}(z)y=0$, where
\begin{equation}
\label{eqn:Qform}
\mathcal{Q}(z):=\frac{a}{z^2}\+\frac{b}{(z-1)^2}\+\frac{c}{z(z-1)},
\end{equation}
and
\[
a:=\frac{\gamma(\gamma-2)}{4},\quad b:=\frac{(\alpha+\beta-\gamma)^2-1}{4},\quad c:=\frac{\gamma(\alpha+\beta-\gamma+1)-2\alpha\beta}{2}\,.
\]
If~$u$ is a solution of~\eqref{eqn:hgde}, a solution of~$\frac{dy^2}{dz^2}-\mathcal{Q}(z)y=0$ is given by
\begin{equation}
\label{eqn:y}
y\=z^{\gamma/2}(1-z)^{(\alpha+\beta-\gamma+1)/2}u\,.
\end{equation}
Let~$\tau$ be the ratio of linearly independent solutions of~$\frac{dy^2}{dz^2}-\mathcal{Q}(z)y=0$. In~\cite[Sec. 3]{ohy96} it is proven that the functions
\begin{equation}
\label{eqn:XYZ}
X\:=\frac{d}{d\tau}\log{y},\quad Y\:=\frac{d}{d\tau}\log{\frac{y}{z}},\quad Z:=\frac{d}{d\tau}\log{\frac{y}{z-1}}
\end{equation}
satisfy the following system of nonlinear ODEs
\begin{equation}
\label{eqn:ohyama}
\left\{
\begin{aligned}
\frac{dX}{d\tau}&=X^2+a(X-Y)^2+b(X-Z)^2+c(X-Y)(X-Z)\\
\frac{dY}{d\tau}&=Y^2+a(X-Y)^2+b(X-Z)^2+c(X-Y)(X-Z)\\
\frac{dZ}{d\tau}&=Z^2+a(X-Y)^2+b(X-Z)^2+c(X-Y)(X-Z)\,.
\end{aligned}
\right.
\end{equation}
The Lie algebra generated by the vector field representation of the Ohyama system~$\frac{dX}{d\tau}\frac{\partial }{\partial X}+\frac{dY}{d\tau}\frac{\partial }{\partial Y}+\frac{dZ}{d\tau}\frac{\partial }{\partial Z}$ along with
\[
\rvf\=2X\frac{\partial }{\partial X}+2 Y\frac{\partial }{\partial Y}+2 Z\frac{\partial }{\partial Z}\,,\quad\cvf\=-\frac{\partial }{\partial X}-\frac{\partial }{\partial Y}-\frac{\partial }{\partial Z}\,,
\]
is isomorphic to $\sl2$. Hence it follows from Theorem~\ref{thm 1} and Theorem~\ref{thm 2} that the Ohyama system can be transformed to an \mods. In the following we construct such transformation for choices of~$\alpha,\beta,\gamma\in\Q$ related to hyperbolic triangles.
There are several possibilities. For instance, the transformation
\[
(X,Y,Z)\;\mapsto\;\biggl(\frac{Y+Z}{2},\frac{X-Z}{2},\frac{X-Y}{2}\biggr)
\]
already brings system~\eqref{eqn:ohyama} in RRC form. As will be clear below (Remark~\ref{rem XYZ}), the solutions of this system are not necessarily holomorphic functions on the upper half-plane. We will instead find a transformation that gives an RRC system with holomorphic solutions.
To this end, we need to recall the mapping properties of the ratio of solutions of HGDE and their relation with (twisted) modularity.

\subsection{Triangle groups}
\label{sec:triangle}
In this section we consider hypergeometric differential equations~\eqref{eqn:hgde} with parameters
\begin{equation}
\label{eqn:hypar}
\alpha\=\frac{1}{2}\biggl(1+\frac{r}{m}-\frac{k}{n}\biggr),\quad\beta\=\frac{1}{2}\biggl(1-\frac{k}{n}-\frac{r}{m}\biggr)\quad\gamma=1\,,\qquad\frac{k}{n}+\frac{r}{m}<1\,,
\end{equation}
where~$m,n,k,r\in\Z_{>0}$. Hypergeometric differential equations with the above choice of parameters are related to conformal mappings of hyperbolic triangles (see~\cite[Chapter V]{Nehari} for the general theory). More precisely, the ratio of linearly independent solutions of~\eqref{eqn:hgde} with parameters~\eqref{eqn:hypar} maps conformally the upper half-plane to the interior of a hyperbolic triangle~$\mathcal{T}(\tfrac{k}{n},\tfrac{r}{m})$ with angles~$(0,\tfrac{k\pi}{n},\tfrac{r\pi}{m})$. The monodromy group of the differential equation is then conjugated to the reflection group~$\Delta(\tfrac{k}{n},\tfrac{r}{m})$  of the triangle~$\mathcal{T}(\tfrac{k}{n},\tfrac{r}{m})$. These groups, generated by the reflections across the sides of the triangle~$\mathcal{T}(\tfrac{k}{n},\tfrac{r}{m})$, are known as~\emph{hyperbolic triangle groups}.
If we fix a choice~$\{v_1:=e^{\pi i(m-r)/m},v_2:=e^{\pi ik/n},v_3:=i\infty\}$, for the vertices of~$\mathcal{T}(\tfrac{k}{n},\tfrac{r}{m})$, and set~$\lambda(\tfrac{k}{n},\tfrac{r}{m}):=2\cos(\tfrac{\pi r}{m})+2\cos(\frac{\pi k}{n})$ we can give explicitly the generators of the reflection group by
\begin{equation}
\label{eqn:genD}
S=S_n\:=\begin{pmatrix}
-2\cos\bigl(\tfrac{\pi k}{n}\bigr) & 1\\
-1 & 0
\end{pmatrix}\,,\quad
T=T_{n,m}\:=\begin{pmatrix}
1 & \lambda(\tfrac{k}{n},\tfrac{r}{m})\\
0 & 1
\end{pmatrix}\,.
\end{equation}
The properties of the groups~$\Delta(\tfrac{k}{n},\tfrac{r}{m})$ turn out to be dramatically different in the case~$(k,r)=(1,1)$ and in the case where~$k>1$ or~$r>1$.

\subsubsection{Schwarz triangle groups ($k=r=1$)}
\label{sec:schwarz}
Assume now that~$k=r=1$ in~\eqref{eqn:hypar}. In this case the monodromy group of the hypergeometric differential equations is known as~\emph{Schwarz triangle group} and traditionally denoted by~$\Delta(n,m,\infty)$. It has the following presentation
\begin{equation}
\label{eqn:prestri}
\Delta(n,m,\infty)\:=\Delta(\tfrac{1}{n},\tfrac{1}{m})\=\langle{S_n,T_{n,m} | S_n^n=(S_nT_{n,m})^m=1}\rangle\,,
\end{equation}
and is the reflection group of the hyperbolic triangle~$\mathcal{T}(n,m):=\mathcal{T}(\tfrac{1}{n},\tfrac{1}{m})$ of angles~$(0,\tfrac{\pi}{m},\tfrac{\pi}{n})$. The main peculiarity of the groups~$\Delta(n,m,\infty)$, as is well known, is that they are discrete subgroups of~$\mathrm{SL}_2(\R)$, i.e., Fuchsian groups. A fundamental domain for their action is provided by the union of a triangle~$\mathcal{T}(n,m)$ and its reflection along one of its sides. It follows that the Schwarz triangles~$\mathcal{T}(n,m)$ tessellate the upper half-plane~$\Po$.
This fact has the following important consequence. As mentioned above, a ratio of independent solutions of the hypergeometric differential equation~\eqref{eqn:hgde} with parameters as in~\eqref{eqn:hypar}~$(k=r=1)$ maps conformally the upper half-plane into the hyperbolic triangle~$\mathcal{T}(n,m)$. Let~$t\colon\mathcal{T}(n,m)\to\Po$ be the inverse holomorphic map.
By repeated use of the Schwarz reflection principle, one can extend~$t$ to a holomorphic map on~$\Po$. By construction~$t$ is defined on the quotient~$\Po/\Delta(n,m,\infty)$ and is a biholomorphism~$\Po/\Delta(n,m,\infty)\to\C\smallsetminus\{0,1\}$; such a map is called a~\emph{Hauptmodul} for the triangle group~$\Delta(n,m,\infty)$. Moreover, one can build a theory of modular forms over the group~$\Delta(n,m,\infty)$. This is known as Gauss-Schwarz theory.

\subsubsection{Modular embeddings ($k>1$ or $r>1$)}
Consider now the case~$\mathcal{T}(\tfrac{k}{n},\tfrac{r}{m})$ is an hyperbolic triangle where at least one among~$k$ and~$r$ is larger than~$1$. As is well known~\cite{Hecke}, its reflection group $\Delta(\tfrac{k}{n},\tfrac{r}{m})$ is not a discrete subgroup of~$\mathrm{SL}_2(\R)$. From the mapping point of view, this means that the triangle~$\mathcal{T}(\tfrac{k}{n},\tfrac{r}{m})$ does not tessellate the upper half plane. It is then not possible to get, through a straightforward application of Schwarz's reflection, a holomorphic map~$\Po\to\C$ from the ratio of solutions of the HGDE associated to~$\mathcal{T}(\tfrac{k}{n},\tfrac{r}{m})$. Moreover, the only automorphic forms on these groups are constants. To deal with this situation we exploit the existence of a special map associated to the group~$\Delta(n,m,\infty)$ called modular embedding.

\smallskip

Let~$\Delta(n,m,\infty)$ be as in~\eqref{eqn:prestri} and let~$K_{n,m}:=\Q(\cos(\pi/n),\cos(\pi/m))$ be the totally real field generated over~$\Q$ by the traces of elements of~$\Delta(n,m,\infty)$ (see~\cite{Takeuchi}). Let~$\gamma_1,\gamma_2,\gamma_3$ be a set of generators of~$\Delta(n,m,\infty)$ that satisfy the relation~$\gamma_1\gamma_2\gamma_3=1$. Then the~$K_{n,m}$-vector space generated by~$\{\gamma_0:=1,\gamma_1,\gamma_2,\gamma_3\}$ is a quaternion algebra~$\mathcal{B}_\Delta$ over~$K_{n,m}$ and, if~$\mathcal{O}_{n,m}$ denotes the ring of integers of~$K_{m,n}$, the~$\mathcal{O}_{n,m}$-lattice~$\mathcal{O}_\Delta:=\oplus_{i=0}^3\mathcal{O}_{n,m}\gamma_i$ is an order in~$\mathcal{B}_\Delta$. It follows that we can identify the group~$\Delta(n,m,\infty)$ with a subgroup of the group~$\mathcal{O}^\times_{\Delta,1}$ of units of~$\mathcal{O}_\Delta$ of reduced norm one.
On the other hand, the theory of quaternion algebras guarantees the existence of an isomorphism
\begin{equation}
\label{eqn:quat}
\iota\colon\mathcal{B}_\Delta\otimes_\Q\R\to M_2(\R)^h\oplus H^{d-r}\,,
\end{equation}
for some~$1\le h\le d$, where~$d:=[K_{n,m}:\Q]$ and~$H$ denotes the Hamiltonian algebra. Under the map~$\iota$ the group~$\mathcal{O}^\times_{\Delta,1}$ maps to a discrete subgroup of~$\mathrm{SL}_2(\R)^h$ acting discontinuously on~$\Po^h$. It follows that the triangle group~$\Delta(n,m,\infty)$ can be realized as a subgroup of a group acting discontinuously on~$\Po^h$. Since the map~$\iota$ extends the real embeddings of~$K_{n,m}$, the group~$\Delta(n,m,\infty)$ acts on~$\Po^h$ via Galois conjugation.

The above construction works more generally for Fuchsian groups with totally real trace field. The peculiarity of triangle groups consists in the existence of an holomorphic map~$\phi\colon\Po\to\Po^h$, called~\emph{modular embedding}, such that for every~$\gamma\in\Delta(n,m,\infty)$,
\begin{equation}
\label{eqn:modemb}
\phi\circ\gamma\=\iota(\gamma)\circ\phi\,.
\end{equation}
It is possible to normalize the map~$\phi=\bigl(\phi_1,\phi_2\dots,\phi_h\bigr)$ in such a way that~$\phi_1=\mathrm{Id}$; this is the normalization we adopt in the following. The map~$\phi$ was constructed for triangle groups in~\cite{CohenWolfart} in three different ways. Apart from the case of triangle groups (and arithmetic Fuchsian groups), examples of modular embeddings seem to be rare; an example related to Teichmüller curves  uniformized by non-arithmetic Fuchsian groups is discussed in~\cite{MoellerZagier}.

\subsubsection{Modular embedding of triangle groups and HGDE}
\label{sec:modembde}
Fix~$2\le n\le m$, and let~$k,r\in\Z_{>0}$ be such that~$k/n+r/m<1$.
The construction of the modular embedding~$(\phi_j)_{j=1}^h$ for~$\Delta(n,m,\infty)$ given in \cite{CohenWolfart} implies that there exists a unique~$j\in\{1,\dots,h\}$ such that the restriction of~$\phi_j$ to the open triangle $\mathcal{T}(n,m)$ is a conformal map onto~$\mathcal{T}(\tfrac{k}{n},\tfrac{r}{m})$.
For this reason, from now on we use a different notation: we denote~$k,r$ as above by~$k_j,r_j$ and let~$\mathcal{T}_j:=\mathcal{T}(\tfrac{k_j}{n},\tfrac{r_j}{m})$, so that~$\phi_j\mathcal{T}(n,m)=\mathcal{T}_j$. Moreover, we denote by~$y_j$ the holomorphic solution of the HGDE attached to the triangle~$\mathcal{T}_j$, and by~$\tau_j$ the ratio of two linearly independent solutions. From our normalization ($\phi_1=\mathrm{Id}$) we recover in particular that~$\tau_1$ extends to~$\tau\in\Po$.

The existence of the modular embedding for~$\Delta(n,m,\infty)$ lets us interpret the functions~$\tau_j$ and~$y_j$ as functions on the upper half-plane. We have by construction~$\tau_j=\phi_j(\tau)$ for~$\tau\in\mathcal{T}(n,m)$, and since~$y_j$ can be expressed as a function of~$\tau_j$, it becomes a function of~$\tau\in\mathcal{T}(n,m)$, by pre-composing it with~$\phi_{j}$. Finally, by repeated use of Schwarz's reflection, we can extend~$y_j$ to a function on~$\Po$.


\subsection{Twisted modular forms}
\label{sec:tmf}
A consequence of the non-discreteness of~$\Delta(\tfrac{k}{n},\tfrac{r}{m})$ is that the theory of automorphic forms on this group with respect to the standard automorphy factor of~$\mathrm{SL}_2(\R)$ reduces to constant functions. It makes sense then to consider new automorphy factors defined in terms of the modular embedding~$\phi$.

Fix~$2\le n\le m$ and consider a modular embedding~$\phi=(\phi_j)_{j=1}^h\colon\Po\to\Po^h$ associated to the group~$\Delta(n,m,\infty)$. Let~$K_{n,m}$ be the trace field of~$\Delta(n,m,\infty)$ and let~$\sigma_i\colon K_{n,m}\to\R$, for~$i=1,\dots,d=[K_{n,m}:\Q]$, be the real embeddings of~$K_{n,m}$ ($h\le d$, see~\eqref{eqn:quat}).

Let~$\vec{w}=(w_1,\dots,w_h)\in\Q^h$. An~\emph{automorphy factor of weight~$\vec{w}$ with respect to~$\phi$} is a map~$J_\phi\colon\Delta(n,m,\infty)\times\Po\to\C$, holomorphic on~$\Po$ and such that:
\begin{enumerate}
\item $J_\phi(\gamma_1\gamma_2,\tau)=J_\phi(\gamma_1,\gamma_2\tau)J_\phi(\gamma_2,\tau)$ for every~$\gamma_1,\gamma_2\in\Delta(n,m,\infty)$, (cocycle relation)
\item for every~$\gamma=\left(\begin{smallmatrix}a & b \\c & d\end{smallmatrix}\right)\in\Delta(n,m,\infty)$, if $x^{\sigma_j}$ denotes the image of~$x\in K_{n,m}$ via the real embedding~$\sigma_j$, we have
\[
\frac{d}{d\tau}\log J_\phi(\gamma,\tau)=\sum_{j=1}^h\frac{w_jc^{\sigma_j}}{c^{\sigma_j}\phi_j(\tau)+d^{\sigma_j}}\phi_j'(\tau)\,.
\]
\end{enumerate}
Examples are provided by the functions
\begin{equation}
\label{eqn:Jj}
J_{\phi_j}(\gamma,\tau)\:=c^{\sigma_j}\phi_j(\tau)+d^{\sigma_j}\quad\text{if }\gamma=\begin{pmatrix}a & b\\c & d\end{pmatrix}\in\Delta(n,m,\infty)\,,
\end{equation}
defined for~$j=1,\dots, h.$
An easy computation (see~\cite[Section 2]{MoellerZagier}) shows that~$J_{\phi_j}$ is an automorphy factor of weight~$(0,\dots0,w_j=1,0,\dots,0)$. For~$j=1$ and~$\phi_1=\mathrm{Id}$ we get the classical automorphy factor of~$\mathrm{SL}_2(\R)$.

Following~\cite{MoellerZagier} we define a~\emph{twisted modular form} associated to~$J_\phi$ to be a holomorphic function~$f\colon\Po\to\C$ that is bounded as~$\mathrm{Im}(z)\to\infty$ uniformly with respect to~$\mathrm{Re}(z)$, and that satisfies the modular transformation
\begin{equation}
\label{eqn:hymod}
f(\gamma\tau)\=J_\phi(\gamma,\tau)f(\tau),\quad\text{for every }\gamma\in\Delta(n,m,\infty)\,.
\end{equation}
Basic examples of twisted modular forms are:
\begin{enumerate}[wide=0pt]
\item Modular forms of weight~$k$ on~$\Delta(n,m,\infty)$ are twisted modular forms of weight~$(k,0,\dots,0)$ for any choice of~$\phi$;
\item When~$\Delta(n,m,\infty)$ embeds into a Hilbert modular group, the restriction of Hilbert modular forms of weight~$(w_1,\dots,w_h)$ are twisted modular forms of the same weight. Restrictions of Hilbert modular forms do not span in general the space of twisted modular forms.
\item The derivative~$\phi_j'$ of~$\phi_j$ is a twisted modular form of weight~$(2,0,\dots,0,w_j=-2,0,\dots,0)$. Explicitly, the transformation property of~$\phi$ in~\eqref{eqn:modemb} for the component~$\phi_j$ reads
\[
\phi_j(\gamma\tau)\=\frac{a^{\sigma_j}\phi_j(\tau)+b^{\sigma_j}}{c^{\sigma_j}\phi_j(\tau)+d^{\sigma_j}}\,,\quad\gamma=\begin{pmatrix}a & b \\c & d\end{pmatrix}\in\Delta(n,m,\infty)\,.
\]
Taking the derivative with respect to~$d/d\tau$ at both sides gives
\[
\phi_j'(\gamma\tau)\=\phi'_j(\tau)\frac{(c\tau+d)^2}{(c^{\sigma_j}\phi_j(\tau)+d^{\sigma_j})^2}\,.
\]
Moreover, the transformation property of~$\phi_j$ implies~$\phi(T\tau)=\phi_j\bigl(\tau+\lambda\bigr)=\phi_j(\tau)+\lambda^{\sigma_j}$ (where~$\lambda=\lambda\bigl(\frac{k_j}{n},\frac{r_j}{m}\bigr)$ in~\eqref{eqn:genD}). This implies that~$\phi_j(\tau)-\lambda^{\sigma_j}\tau/\lambda$ is invariant under~$\tau\to T\tau$. In particular,~$\phi_j(\tau)=\lambda^{\sigma_j}\tau/\lambda+g(q)$, where~$q=e^{2\pi i\tau/\lambda}$, for a function~$g$ holomorphic in the disk~$|q|<1$ punctured in~$q=0$. It can be shown that~$g$ extends to a holomorphic function in~$q=0$ (see Proposition 1.1 of~\cite{MoellerZagier} for details). This implies that~$\phi_j'$ is bounded as~$\tau\to\infty$.
\end{enumerate}

We denote by~$M_{J_\phi}(\Delta(n,m,\infty))$ the vector space of all twisted modular forms associated to~$J_\phi$. In the following, we will be interested in the space of all twisted modular forms of fixed weight
\[
M_{\vec{w}}(\Delta(n,m,\infty),\phi)\:=\bigoplus_{\overset{J_\phi}{\text{weight }\vec{w}}}M_{J_\phi}(\Delta(n,m,\infty))
\]
where the sum is over all automorphy factors~$J_\phi$ of weight~$\vec{w}$ with respect to~$\phi$ for~$\Delta(n,m,\infty)$.
A modular form with respect to an automorphy factor~$J_\phi$ of weight~$\vec{w}=(0,\dots,0,w_j=w,0,\dots,0)$ will be called of~\emph{pure $j$-th weight $w$}.

Let~$e_n,e_m$ denote the elliptic points of~$\Delta(n,m,\infty)$ of order~$n,m$ respectively in~$\pro\mathrm{SL}_2(\R)$. From now on we fix the following normalization for the Hauptmodul~$t$ of~$\Delta(n,m,\infty)$:
\begin{equation}
\label{eqn:divhaupt}
t(\infty)\=0\,,\quad t(e_n)=1\,,\quad t(e_m)=\infty\,.
\end{equation}
For future use, we collect simple information of the divisor of~$t'$ and~$\phi_j'$.
\begin{lemma}
\label{lem:divmod}
\begin{enumerate}[wide=0pt]
\item $\mathrm{div}(1-t)=e_n-e_m,\quad\mathrm{div}\,t'=\infty+\bigl(1-\frac{1}{n}\bigr)e_n+\bigl(-1-\frac{1}{m}\bigr)e_m\,.$
\item $\mathrm{div}\,\phi_j'=\frac{k_j-1}{n}e_n+\frac{r_j-1}{m}e_m\,.$
\end{enumerate}
\end{lemma}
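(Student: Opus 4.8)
The plan is to compute the divisors of $1-t$, $t'$, and $\phi_j'$ as divisors on the orbifold quotient $X:=\Po/\Delta(n,m,\infty)$, which as a Riemann surface is $\mathbb{P}^1$ with the three marked points $\infty$ (the cusp, of width $\lambda$), $e_n$ (elliptic of order $n$), and $e_m$ (elliptic of order $m$). The Hauptmodul $t$ is a biholomorphism $X\setminus\{\infty,e_n,e_m\}\to\C\setminus\{0,1\}$ normalized by \eqref{eqn:divhaupt}, so on $X$ one has $\mathrm{div}(t)=\infty-e_m$ and $\mathrm{div}(1-t)=e_n-e_m$: indeed $1-t$ vanishes exactly where $t=1$, i.e.\ at $e_n$, and has its only pole at $e_m$, each with multiplicity one since $t$ is a degree-one map of the underlying curve. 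This gives the first assertion of (1).

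For $\mathrm{div}\,t'$, I would work in local coordinates at each of the three special points, keeping track of the orbifold structure. Away from $\infty,e_n,e_m$ the map $t$ is a local biholomorphism, so $t'$ is nonvanishing and finite there. At the cusp $\infty$, the local parameter on $X$ is $q=e^{2\pi i\tau/\lambda}$, and since $t(\infty)=0$ with $t$ vanishing simply in $q$, we have $t=c\,q+O(q^2)$, hence $t'=\tfrac{dt}{dq}\cdot\tfrac{dq}{d\tau}=\tfrac{2\pi i}{\lambda}q\,(c+O(q))$, which vanishes to order $1$ in $q$; this contributes $+\infty$ to the divisor. At $e_n$ the local parameter on $X$ is $s=(\tau-e_n)^{?}$—more precisely the uniformizer downstairs is a local coordinate $s$ with $\tau-e_n\sim s^{1/n}$ up to a Möbius normalization, equivalently $s\sim(\tau-e_n)^n$. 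Since $t-1$ vanishes simply in $s$, we get $t-1\sim(\text{const})(\tau-e_n)^n$, so $t'\sim n(\text{const})(\tau-e_n)^{n-1}$, i.e.\ $t'$ has a zero of order $n-1$ in $\tau$, which as a divisor on $X$ (where $\tau-e_n$ has ``order'' $1/n$ as a section-theoretic weight) contributes $\tfrac{n-1}{n}e_n=(1-\tfrac1n)e_n$. At $e_m$ the local uniformizer downstairs satisfies $s\sim(\tau-e_m)^m$ and $t$ has a simple pole in $s$, so $t\sim(\text{const})(\tau-e_m)^{-m}$ and $t'\sim(\text{const})(\tau-e_m)^{-m-1}$, contributing $-(1+\tfrac1m)e_m$. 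Summing the three local contributions yields exactly the stated formula $\mathrm{div}\,t'=\infty+(1-\tfrac1n)e_n+(-1-\tfrac1m)e_m$; as a consistency check, the total degree is $1+(1-\tfrac1n)+(-1-\tfrac1m)=1-\tfrac1n-\tfrac1m$, which is minus the Euler characteristic of the orbifold, as it must be since $t'\,d\tau$ represents the canonical divisor, cf.\ the fact that $t'$ is a (twisted) modular form of weight $2$.

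For part (2), the function $\phi_j'$ is a twisted modular form of weight $(2,0,\dots,0,-2,0,\dots,0)$, holomorphic and bounded at $\infty$ (as recalled in the discussion of Example~(3) in Section~\ref{sec:tmf}), and nonvanishing on all of $\Po$ since $\phi_j$ is a local biholomorphism between open triangles extended by Schwarz reflection. Hence $\mathrm{div}\,\phi_j'$ is supported on $\{e_n,e_m\}$, and I only need the two local exponents. Near $e_n$, the map $\phi_j$ sends the vertex of $\mathcal{T}(n,m)$ of angle $\pi/n$ to the vertex of $\mathcal{T}_j=\mathcal{T}(\tfrac{k_j}{n},\tfrac{r_j}{m})$ of angle $k_j\pi/n$; in terms of the local downstairs coordinate $s\sim(\tau-e_n)^n$ the function $\phi_j$ behaves like $s^{k_j/n}\sim(\tau-e_n)^{k_j}$ up to Möbius normalization (angle $k_j\pi/n$ at the image means the conformal map opens the angle by the factor $k_j$), so $\phi_j'\sim(\text{const})(\tau-e_n)^{k_j-1}$, contributing $\tfrac{k_j-1}{n}e_n$ to the divisor on $X$. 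The identical analysis at $e_m$, where the angle $\pi/m$ is sent to $r_j\pi/m$, gives $\phi_j\sim(\tau-e_m)^{r_j}$ and $\phi_j'\sim(\text{const})(\tau-e_m)^{r_j-1}$, contributing $\tfrac{r_j-1}{m}e_m$. This proves $\mathrm{div}\,\phi_j'=\tfrac{k_j-1}{n}e_n+\tfrac{r_j-1}{m}e_m$. Note this specializes correctly: for $j=1$, $\phi_1=\mathrm{Id}$ forces $k_1=r_1=1$ and the divisor is zero, consistent with $\phi_1'\equiv 1$.

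\medskip

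\emph{Main obstacle.} The only delicate point is the precise bookkeeping at the elliptic points: one must correctly relate the local exponent of a function in the upstairs coordinate $\tau-e_n$ to its order as a divisor on the orbifold $X$ (the factor $1/n$), and one must justify that the conformal map realizing the angle $k_j\pi/n$ at a vertex has leading local behaviour $(\tau-e_n)^{k_j}$ rather than, say, $(\tau-e_n)^{k_j/n\cdot n}$ with some hidden extra factor. This is standard Schwarz-triangle-function analysis (as in \cite[Chapter V]{Nehari}), but it is where all the arithmetic of $n,m,k_j,r_j$ enters and hence where care is needed; everything else reduces to reading off orders of zeros and poles from the normalization \eqref{eqn:divhaupt} and the definition of a Hauptmodul.
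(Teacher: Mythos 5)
Your proof is correct and follows essentially the same route as the paper: part (1) from the normalization of the Hauptmodul, and part (2) by reducing to a fundamental domain via twisted modularity and reading off the local exponents of $\phi_j$ at the vertices from its conformal-mapping (angle-opening) behaviour. The paper's own proof is just terser, outsourcing the vertex analysis to the construction of $\phi$ in the cited reference of Cohen--Wolfart, whereas you carry out that local bookkeeping explicitly (and correctly).
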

\begin{proof}
Point (1) is immediate from the normalization~\eqref{eqn:divhaupt}.
Since~$\phi_j'$ is a twisted modular form (Example 3 in Section~\ref{sec:tmf}), the divisor of~$\phi_j'$ can be studied on a fundamental domain of~$\Delta(n,m,\infty)$. The expression for~$\mathrm{div}\,\phi_j'$ follows then from the construction of~$\phi$ in~\cite{CohenWolfart}.
\end{proof}

\begin{prop}
\label{prop:dimtws}
Let~$2\le n\le m \in\Z$ and let~$\phi=(\phi_j)\colon\Po\to\Po^h$ be a modular embedding for~$\Delta(n,m,\infty)$. Assume that~$\phi_j\mathcal{T}(n,m)=\mathcal{T}(\tfrac{k_j}{n},\tfrac{r_j}{m})$, and denote~$N_j:=mn-mk_j-nr_j$. For every~$\vec{w}=(w_1,\dots,w_h)\in\Q^h$:
\begin{enumerate}
\item If~$\sum_{j=1}^h{w_jN_j}\;\notin2\gcd(m,n)\Z$, then~$\dim M_{\vec{w}}(\Delta(n,m,\infty),\phi)=0$\,
\item If~$\sum_{j=1}^h{w_jN_j}\;\in2\gcd(m,n)\Z_{\ge0}$, then
\begin{equation}
\label{eqn:hydim}
\dim{M_{\vec{w}}(\Delta(n,m,\infty),\phi)}\le
\begin{cases}
1+\Bigl\lfloor{\sum_{j=1}^h\frac{w_jN_j}{2mn}}\Bigr\rfloor & \text{if } \Bigl\{\tfrac{\sum_{j=1}^h{w_jN_j}}{2mn}\Bigr\}=\tfrac{a}{n}+\frac{b}{m}				\\
\Bigl\lfloor{\sum_{j=1}^h\frac{w_jN_j}{2mn}}\Bigr\rfloor & \text{otherwise}\,,
\end{cases}
\end{equation}
where~$\{x\}\in [0,1)$ denotes the fractional part of~$x\in\Q$ and~$a,b\in\Z_{\ge0}$.
\end{enumerate}
\end{prop}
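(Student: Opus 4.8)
\medskip
\noindent\emph{Proof strategy.} The plan is to translate the statement into divisor arithmetic on the compactified quotient~$X:=\overline{\Po/\Delta(n,m,\infty)}$, which the Hauptmodul~$t$ identifies with~$\pro^1$: under~$t$ the cusp~$\infty$, the elliptic point~$e_n$, and the elliptic point~$e_m$ correspond to~$0,1,\infty$ respectively. A nonzero twisted modular form~$f$ of weight~$\vec w$ with respect to an automorphy factor~$J_\phi$ has a well-defined order of vanishing at every point of~$\Po$ and at the cusp, hence an effective~$\Q$-divisor~$\mathrm{div}(f)$ on~$X$ whose non-integral part is concentrated at~$e_n,e_m$ with denominators dividing~$n,m$. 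Since a quotient of two twisted modular forms of the same weight and automorphy factor is a~$\Delta(n,m,\infty)$-invariant function that is meromorphic at the cusp, it is a rational function of~$t$. So if one exhibits, for the relevant~$\vec w$, a single nonzero~$f_{\vec w}\in M_{\vec w}(\Delta(n,m,\infty),\phi)$ whose divisor is supported on~$\{\infty,e_n,e_m\}$, then for a fixed~$J_\phi$ every element of~$M_{J_\phi}$ is~$f_{\vec w}$ times a polynomial in~$t$ of bounded degree, and~$\dim M_{\vec w}$ becomes a count of admissible monomials~$t^k$, summed over the finitely many automorphy factors~$J_\phi$ of weight~$\vec w$ (these correspond to the characters of the finite group~$\Delta(n,m,\infty)^{\mathrm{ab}}$).

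First I would compute~$\deg\mathrm{div}(f)$. Multiplying~$f$ by~$\prod_{j\ge2}(\phi_j')^{w_j/2}$ — and recalling that~$\phi_j'$ is a twisted modular form of weight~$(2,0,\dots,0,-2,0,\dots,0)$, with the~$-2$ in the~$j$-th slot (Example~3 in Section~\ref{sec:tmf}) — produces a meromorphic twisted modular form of pure first weight~$W:=\sum_{j=1}^h w_j$, i.e.\ an ordinary meromorphic modular form of weight~$W$ on~$\Delta(n,m,\infty)$ with a (possibly non-trivial) multiplier. The valence formula for~$\Delta(n,m,\infty)$ shows that such a form has~$\deg\mathrm{div}=\tfrac{W}{2}\bigl(1-\tfrac1n-\tfrac1m\bigr)=\tfrac{W N_1}{2mn}$, where~$N_1=mn-m-n$, irrespective of the multiplier; and by Lemma~\ref{lem:divmod}(2), $\deg\mathrm{div}\,\phi_j'=\tfrac{k_j-1}{n}+\tfrac{r_j-1}{m}=\tfrac{N_1-N_j}{mn}$. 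Subtracting yields the identity
\[
\deg\mathrm{div}(f)\=\frac{1}{2mn}\Bigl(W N_1-\sum_{j\ge2}w_j(N_1-N_j)\Bigr)\=\frac{1}{2mn}\sum_{j=1}^h w_j N_j\qquad\text{on }X,
\]
which already forces~$M_{\vec w}(\Delta(n,m,\infty),\phi)=0$ unless~$\sum_j w_j N_j\ge0$.

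Next I would determine the fractional orders~$\tfrac{a_n}{n},\tfrac{a_m}{m}$ of~$f$ at~$e_n,e_m$ (with~$0\le a_n<n$, $0\le a_m<m$): once the multiplier of~$f$ is fixed these depend only on~$\vec w$, and they are extracted from the local behaviour of the~$\phi_j'$ and of~$t'$ at~$e_n,e_m$ encoded in Lemma~\ref{lem:divmod}. Reducing the degree identity modulo~$\Z$, the divisor can be effective only if~$\{\sum_j w_j N_j/2mn\}\equiv\tfrac{a_n}{n}+\tfrac{a_m}{m}\pmod\Z$; together with~$n\Z+m\Z=\gcd(m,n)\Z$ and a check that none of the finitely many admissible multipliers rescues the remaining cases, this gives~$\dim M_{\vec w}(\Delta(n,m,\infty),\phi)=0$ when~$\sum_j w_j N_j\notin 2\gcd(m,n)\Z$, which is part~(1). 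Assuming instead~$\sum_j w_j N_j\in 2\gcd(m,n)\Z_{\ge0}$, I would build~$f_{\vec w}$ as a product of suitable rational powers of~$t$, of~$1-t$ (using~$\mathrm{div}\,t=[\infty]-[e_m]$ and~$\mathrm{div}(1-t)=[e_n]-[e_m]$ from Lemma~\ref{lem:divmod}(1)) and of the~$\phi_j'$, so that~$\mathrm{div}(f_{\vec w})$ carries the prescribed fractional parts at~$e_n,e_m$ and all its remaining integral mass at~$e_m$. For a fixed~$J_\phi$, any~$g\in M_{J_\phi}$ then satisfies~$g/f_{\vec w}\in\C(t)$ with poles only at~$e_m$, hence~$g/f_{\vec w}\in\C[t]$; the requirements of holomorphy at~$\infty$ and at~$e_n$, together with the pole bound at~$e_m$, restrict~$g/f_{\vec w}$ to~$\mathrm{span}\{1,t,\dots,t^D\}$, and counting how many multiplications by~$t$ (each moving one unit of divisor from~$e_m$ to~$\infty$) stay below~$\deg\mathrm{div}(f_{\vec w})$ gives~$D=\lfloor\sum_j w_j N_j/2mn\rfloor$ when~$\{\sum_j w_j N_j/2mn\}=\tfrac a n+\tfrac b m$ for some~$a,b\in\Z_{\ge0}$, and~$D=\lfloor\cdot\rfloor-1$ otherwise. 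Summing~$\dim M_{J_\phi}\le D+1$ over the finitely many admissible~$J_\phi$ then gives the bound~\eqref{eqn:hydim}.

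The hard part will be the local analysis at the two elliptic points~$e_n,e_m$ underpinning both the fractional-order computation and the claim that~$g/f_{\vec w}$ has no unexpected poles there. At the cusp this is easy — the automorphy factors~$J_{\phi_j}$ are trivial on the parabolic generator — but at~$e_n,e_m$ the admissible local exponents of twisted modular forms are governed by the~$\Q$-divisors of the~$\phi_j'$, whose precise shape (Lemma~\ref{lem:divmod}(2)) comes from the explicit construction of the modular embedding in~\cite{CohenWolfart}. The fact that one obtains an inequality in~\eqref{eqn:hydim}, together with the dichotomy in the formula, is exactly the imprint of this step: in this generality one controls the elliptic-point exponents from one side only.
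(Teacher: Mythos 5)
Your strategy is sound in outline and, in substance, parallel to the paper's: both arguments rest on the valence formula asserting that the total orbifold degree of the divisor of a nonzero~$f\in M_{\vec w}$ equals~$\sum_j w_jN_j/(2mn)$, and both deduce part~(1) and the dichotomy in~\eqref{eqn:hydim} from the fact that the contributions of~$e_n$ and~$e_m$ lie in~$\tfrac1n\Z_{\ge0}$ and~$\tfrac1m\Z_{\ge0}$. You obtain the degree identity differently: by multiplying~$f$ by~$\prod_{j\ge2}(\phi_j')^{w_j/2}$, invoking the classical valence formula for weight-$W$ forms on the Fuchsian group, and subtracting~$\deg\mathrm{div}\,\phi_j'$ from Lemma~\ref{lem:divmod}(2); the paper instead integrates~$d\log f$ over the boundary of a fundamental domain, the modular embedding entering through the arcs identified by~$S_n$ via~$\phi_j(\xi_n)=e^{\pi ik_j/n}$ and~$\phi_j(\xi_m)=e^{\pi i(m-r_j)/m}$. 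These are two packagings of the same input from~\cite{CohenWolfart}; yours is cleaner if one grants the classical valence formula with multiplier and rational weight, but one must then clear denominators or justify the branch choices in~$(\phi_j')^{w_j/2}$. Likewise, the explicit generator~$f_{\vec w}$ is not needed for an \emph{upper} bound and hides a single-valuedness verification; taking any nonzero~$f\in M_{J_\phi}$ and bounding the space of rational functions with poles along~$\lfloor\mathrm{div}f\rfloor$ on~$\pro^1$ does the same job.

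The genuine gap is in your last step. You bound each summand~$M_{J_\phi}$ by~$D+1$ and then ``sum over the finitely many admissible~$J_\phi$''; but~$M_{\vec w}$ is by definition the direct sum over \emph{all} automorphy factors of weight~$\vec w$, so if~$N>1$ of them are admissible this yields only~$N(D+1)$, not the claimed~$1+\lfloor\sum_j w_jN_j/(2mn)\rfloor$. This is not a vacuous worry: when~$\gcd(n,m)>1$ there are up to~$\gcd(n,m)$ automorphy factors of a given weight, differing by characters trivial on~$T$, and (for instance when~$n=m$) twisting by such a character shifts the elliptic exponents~$(a_n,a_m)$ to~$(a_n+c,a_m-c)$ and so preserves the fractional part of~$a_n/n+a_m/m$ — all of these factors pass your mod-$\Z$ admissibility test simultaneously. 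To recover the stated bound you must either show that at most one automorphy factor of weight~$\vec w$ carries nonzero forms (true for the pure weights used later, where the multiplier of~$Q_j^aR_j^b$ depends only on the weight), or bound the whole space at once, as the paper does: from~$s+1$ linearly independent elements of~$M_{\vec w}$ it produces a linear combination vanishing to order~$s>\sum_j w_jN_j/(2mn)$ at~$\infty$ and contradicts the valence formula~\eqref{eqn:hydeg}.
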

\begin{proof}
To prove the bound in the statement, we compute the degree of the divisor of an element~$f\in M_{\vec{w}}(\Delta(n,m,\infty),\phi)$. From the transformation property of~$f$~\eqref{eqn:hymod} it follows in fact that the zeros of~$f$ can be studied on a fundamental domain of~$\Delta(n,m,\infty). $
This standard computation is done by integrating~$d\log(f)$ on a compact subset of a fundamental domain of~$\Delta(n,m,\infty)$. A full proof in the case of~$\SL2$, i.e., $(n,m)=(2,3)$, is given in~\cite[Part I, Chapter 2]{bhgz}. We report here only the main ideas of the proof and refer to the above book for details (and a useful picture).

As a fundamental domain~$\mathcal{F}$ for~$\Delta(n,m,\infty)$ consider the union of the triangle of vertices~$\{\xi_n=e^{\pi i/n},\,\xi_m=e^{\pi i(m-1)/m},\,i\infty\}$ and its reflection across the vertical line~$\mathrm{Re}(\tau)=\mathrm{Re}(\xi_n)$.
Let~$f\in M_k(\Delta(n,m,\infty),\phi)$ and let~$D$ denote the domain obtained from~$\mathcal{F}$ by erasing small~$\epsilon$-neighborhoods of the zeros of~$f$, including the neighborhood at~$i\infty$, in such a way that all these neighborhoods are disjoint. Then~$f$ is non-zero on~$D$ and the integral of~$d\log(f)$ over~$\partial{D}$ is zero.
The contribution of each piece of the boundary~$\partial D$ is as follows:
\begin{itemize}[wide=0pt]
\item integration over the~$\epsilon$-neighborhoods: if~$\tau$ is not an elliptic point nor~$\infty$, the integral of~$d\log(f)$ over the boundary of its~$\epsilon$-neighborhood gives~$2\pi i\mathrm\cdot\mathrm{ord}_\tau(f)$ by Cauchy's theorem. Similarly, zeros at the elliptic points~$\xi_n,\xi_m$ contribute by~$2\pi i\cdot\mathrm{ord}_{\xi_n}(f)/n$ and~$2\pi i\cdot\mathrm{ord}_{\xi_m}(f)/m$ respectively, and the neighborhood at infinity gives~$2\pi i\cdot\mathrm{ord}_\infty(f)$.
\item integration over the lines: the vertical lines of the boundary of~$\mathcal{F}$ are identified by the transformation~$T$ and from~\eqref{eqn:hymod} and the definition of automorphy factor it follows that~$d\log f(T\tau)=d\log f(\tau)$. Then the sum of their contribution is zero, since the two lines have opposite orientation.
Integration over the arcs~$(\xi_m,\xi_n)$ and~$(\xi_n,\xi_{m}+\lambda)$ is slightly more involved, and the modular embedding appears.
These arcs are identified by the transformation~$S_n$, which fixes~$\xi_n$ and exchanges~$\xi_m$ and~$\xi_m+\lambda$. From the transformation property~\eqref{eqn:hymod} of~$f$ we see that
\[
\int_{\xi_m}^{\xi_n}{d\log f(\tau)}\+\int_{\xi_n}^{\xi_m+\lambda}{d\log f(\tau)}\=\int_{\xi_m}^{\xi_n}{d\log f(\tau)}-\int_{\xi_m}^{\xi_n}{d\log f(S\tau)}
\]
and by definition of~$J_\phi$ we have, for the modular embedding~$\phi=(\phi_j)_{j=1}^h$,
\[
d\log f(S\tau)\=d\log f(\tau)+d\log J_\phi(S,\tau)\=d \log f(\tau)\+\sum_{j=1}^h{w_j\cdot d\log\phi_j}\,.
\]
Consequently, the contribution of these arcs is given by the integral of~$-\sum_j{w_jd\log\phi_j}$ over the arc~$\xi_m$ to~$\xi_n$. It yields~$-\pi i(mn)^{-1}\sum_{j}{w_jN_j}$, since, by construction, we have~$\phi_j(\xi_n)=e^{\pi i k_j/n}$ and~$\phi_j(\xi_m)=e^{\pi i(m-r_j)/m}$.
\end{itemize}

Considering all the contributions together we get the generalized valence formula
\begin{equation}
\label{eqn:hydeg}
\sum_{\tau\in\Po/\Delta(n,m,\infty)}{\frac{\mathrm{ord}_\tau(f)}{n_\tau}}\+\mathrm{ord}_{\infty}(f)\=\sum_{j=1}^h\frac{w_jN_j}{2mn}\,,
\end{equation}
where~$n_\tau=n,m$ if~$\tau=\xi_n,\xi_m$ respectively, and~$n_\tau=1$ in all the other cases.
The valence formula immediately implies that~$M_{\vec{w}}(\Delta(n,m,\infty),\phi)=0$ if~$\sum_{j=1}^h{w_jN_j}\notin2\gcd(m,n)\Z$.
To prove the bounds in~\eqref{eqn:hydim} we proceed as follows. Let~$\sum_{j=1}^h{w_jN_j}\in2\gcd(m,n)\Z_{\ge0}$ and let~$s:=\lfloor{\tfrac{\sum{w_jN_j}}{2mn}}\rfloor+1$. Given~$s+1$ linearly independent modular forms of weight~$\vec{w}$, we can find a linear combination of those with a zero of order~$s$ at~$\infty$. Since~$s>\sum{w_jN_j}/(2mn)$ this contradicts~\eqref{eqn:hydeg}, and consequently we get the first bound in~\eqref{eqn:hydim}.
We can reduce this bound by one in the case there is no~$a,b\in\Z_{\ge_0}$ with~$\frac{\sum{w_jN_j}}{2mn}-\lfloor\frac{\sum{w_jN_j}}{2mn}\rfloor=\tfrac{a}{m}+\tfrac{b}{n}$. In this case, for a modular form~$f\in M_w(\Delta(n,m,\infty))$,  we necessarily have that~$\mathrm{ord}_{e_n}(f)/n+\mathrm{ord}_{e_m}(f)/m>1$. From the valence formula it follows that~$f$, outside the elliptic points, has at most~$\lfloor{\tfrac{\sum{w_jN_j}}{2mn}}\rfloor-1$ zeros. The same argument as before, now with~$s:=\lfloor{\tfrac{\sum{w_jN_j}}{2mn}}\rfloor$, proves the second bound in~\eqref{eqn:hydim}.
\end{proof}

\subsection{Hypergeometric systems of Rankin-Cohen type}

\begin{theo}
\label{thm:hymain}
\begin{enumerate}[wide=0pt]
\item
Let~$2\le n\le m$ be such that~$1/n+1/m<1$ and consider a modular embedding~$\phi=(\phi_j)_j\colon\Po\to\Po^h$ for~$\Delta(n,m,\infty)$ such that~$\phi_j\mathcal{T}(n,m)=\mathcal{T}\bigl(\tfrac{k_j}{n},\tfrac{r_j}{m}\bigr)$ and~$\phi_1=\mathrm{Id}$. Then for~$j=1,\dots,h$, the system of nonlinear ODEs
\begin{equation}
\label{eqn:hyrc}
\left\{
\begin{aligned}
\frac{P_j'}{\phi_j'}&\=P_j^2\-\Bigl(\frac{mn-mk_j-nr_j}{2nm}\Bigr)^2Q_j^{m-2r_j}R_j^{n-2k_j}\\
\frac{Q_j'}{\phi_j'}&\=\frac{2n}{mn-mk_j-nr_j}P_jQ_j\-\frac{R_j^{n-k_j}Q_j^{1-r_j}}{m}\\
\frac{R_j'}{\phi_j'}&\=\frac{2m}{mn-mk_j-nr_j}P_jR_j\-\frac{Q_j^{m-r_j}R_j^{1-k_j}}{n}\,.
\end{aligned}
\right.
\end{equation}
admits algebraically independent solutions~$P_j,Q_j,R_j\colon\Po\to\C$ that are holomorphic in~$\Po$ and of moderate growth at~$\infty$.
If~$t$ is a Hauptmodul for~$\Delta(n,m,\infty)$ normalized like in~\eqref{eqn:divhaupt}, then~$P_j,Q_j$, and~$R_j$ are given explicitly by
\begin{equation}
\label{eqn:hysol}
\begin{aligned}
P_j&\=\frac{t''\phi_j'-t'\phi_j''}{2t'\phi_j'^2}\-\frac{m+r_j}{2m}\frac{t'}{(t-1)\phi_j'}\+\frac{mk_j+nr_j}{2mn}\frac{t'}{t(t-1)\phi_j'}\,,\\
Q_j&\=\biggl(\frac{(t')^n}{t^n\phi_j'^n(1-t)^{n-k_j}}\biggr)^{1/(nm-nr_j-mk_j)}\,,\quad R_j\=\biggl(\frac{(t')^m}{t^m\phi_j'^m(1-t)^{r_j}}\biggr)^{1/(nm-nr_j-mk_j)}\,.
\end{aligned}
\end{equation}
\item The free polynomial algebra~$\C[Q_j,R_j]$ is the space of pure~$j$-th weight twisted modular forms on~$\Delta(n,m,\infty)$ with respect to~$\phi$
\begin{equation}
\label{eqn:freealg}
\C[Q_j,R_j]\=\bigoplus_{\vec{w}_j\in\Q^h}{M_{\vec{w}_j}\bigl(\Delta(n,m,\infty),\phi\bigr)}:=\bigoplus_{\vec{w}_j\in\Q^h}\bigoplus_{\overset{J_\phi}{\text{weight }\vec{w}_j}}M_{J_\phi}(\Delta(n,m,\infty))\,,
\end{equation}
where~$\vec{w}_j=(0,\dots,0,*,0,\dots,0)$ is non zero only in the~$j$-th position.

For~$j=1$, the ring of quasimodular forms~$\widetilde{M}_{*}(\Delta(n,m,\infty))$ is a subring of~$\C[P_1,Q_1,R_1]$.
\end{enumerate}
\end{theo}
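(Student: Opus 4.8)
\emph{Overview of the plan.} The plan is to obtain system~\eqref{eqn:hyrc} as the image of Ohyama's system~\eqref{eqn:ohyama} under an explicit algebraic change of variables, and to extract both the closed formulas~\eqref{eqn:hysol} and the modular properties from the geometry of the modular embedding. Fix $j$ and apply the discussion of Section~\ref{sec:modembde} to the HGDE~\eqref{eqn:hgde} with parameters~\eqref{eqn:hypar} for $(k,r)=(k_j,r_j)$, $\gamma=1$. The first point to settle is that the independent variable $z$ of that HGDE, transported to $\Po$ via the ratio $\tau_j$ of solutions of the $Q$-form and pre-composed with $\phi_j$, is precisely the Hauptmodul $t$ of $\Delta(n,m,\infty)$ normalized as in~\eqref{eqn:divhaupt}: both functions are $\Delta(n,m,\infty)$-invariant --- for $z$ one converts, via~\eqref{eqn:modemb}, the $\Delta(\tfrac{k_j}{n},\tfrac{r_j}{m})$-invariance of $z$ into invariance under $\Delta(n,m,\infty)$ --- and both send $\infty,e_n,e_m$ to $0,1,\infty$, so they coincide; in particular $t'/\phi_j'=dt/d\tau_j$. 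With this identification Ohyama's functions~\eqref{eqn:XYZ} become $X=\tfrac12(dt/d\tau_j)^{-1}d^2t/d\tau_j^2$, $X-Y=(dt/d\tau_j)/t$, $X-Z=(dt/d\tau_j)/(t-1)$, and they solve~\eqref{eqn:ohyama} with the $a,b,c$ attached to~\eqref{eqn:hypar}.

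\emph{The change of variables and the closed formulas.} Next I would introduce the substitution
\[
X-Y=\frac{R_j^{\,n-k_j}}{Q_j^{\,r_j}},\qquad Z-Y=\frac{Q_j^{\,m-r_j}}{R_j^{\,k_j}},\qquad P_j=\frac{m-r_j}{2m}X+\frac{mk_j+nr_j}{2mn}Y+\frac{n-k_j}{2n}Z,
\]
which, after bookkeeping of exponents with $N_j=mn-mk_j-nr_j$, is a change of variables of the type supplied by the proof of Theorem~\ref{thm 2} for the $\sl2$-action on~\eqref{eqn:ohyama} recalled in Section~\ref{sec:notation} (here $P_j$ is $\cvf$-homogeneous of degree $-1$ because its coefficients sum to $1$, and $Q_j,R_j$ are $\cvf$-invariant monomials of the prescribed weights $2n/N_j$ and $2m/N_j$). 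Substituting this together with $d/d\tau_j=\phi_j'^{-1}d/d\tau$ into~\eqref{eqn:ohyama} and simplifying produces~\eqref{eqn:hyrc}; solving the substitution for $Q_j,R_j,P_j$ in terms of $t,\phi_j$ yields~\eqref{eqn:hysol}. Holomorphy of $Q_j,R_j$ on $\Po$ is where Lemma~\ref{lem:divmod} enters: combining $\mathrm{div}\,t'$, $\mathrm{div}(1-t)$, $\mathrm{div}\,\phi_j'$ with $\mathrm{div}\,t=\infty-e_m$, one finds that the function under the $N_j$-th root in~\eqref{eqn:hysol} has, as a function on $\Po$, a zero of order exactly $N_j$ at $e_m$ (resp. $e_n$) and no other zero or pole --- in particular it does not vanish at the cusp --- so the $N_j$-th root is single valued and holomorphic on $\Po$, and bounded as $\tau\to\infty$ (using Example~3 in Section~\ref{sec:tmf} for $\phi_j'$ and the $q$-expansion of $t$); the same divisor bookkeeping handles $P_j$. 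For algebraic independence, note~\eqref{eqn:hysol} gives $1-t=\zeta\,R_j^{\,n}Q_j^{-m}$ for a root of unity $\zeta$, hence $t\in\C(Q_j,R_j)$, and then $dt/d\tau_j,\ d^2t/d\tau_j^2$ are algebraic over $\C(P_j,Q_j,R_j)$ by the relations above; since $t$ is a Hauptmodul, $t,dt/d\tau_j,d^2t/d\tau_j^2$ are algebraically independent over $\C$ (the only algebraic differential relation they satisfy is the third-order Schwarzian equation $\{t;\tau_j\}=2\mathcal{Q}(t)$), so $\mathrm{trdeg}_\C\C(P_j,Q_j,R_j)=3$.

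\emph{Part (2).} Using $t\circ\gamma=t$ together with the transformation laws $t'(\gamma\tau)=(c\tau+d)^2t'(\tau)$ and $\phi_j'(\gamma\tau)=(c\tau+d)^2(c^{\sigma_j}\phi_j(\tau)+d^{\sigma_j})^{-2}\phi_j'(\tau)$ from Section~\ref{sec:tmf}, one checks on~\eqref{eqn:hysol} that $Q_j^{\,N_j}$ and $R_j^{\,N_j}$ pick up factors $(c^{\sigma_j}\phi_j+d^{\sigma_j})^{2n}$ and $(c^{\sigma_j}\phi_j+d^{\sigma_j})^{2m}$; taking $N_j$-th roots, $Q_j$ and $R_j$ are twisted modular forms of pure $j$-th weights $2n/N_j$ and $2m/N_j$ (the attendant multiplier system being part of the data of the automorphy factor), whence $\C[Q_j,R_j]\subseteq\bigoplus_{\vec w_j}M_{\vec w_j}(\Delta(n,m,\infty),\phi)$. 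For the reverse inclusion I would run the argument used to prove $M_*(\SL2)=\C[E_4,E_6]$: the valence formula~\eqref{eqn:hydeg} bounds the order of vanishing at $\infty$ of a pure $j$-th weight form, one subtracts a monomial $Q_j^aR_j^b$ of the given weight to raise it (the congruence condition of Proposition~\ref{prop:dimtws} guarantees such monomials exist, and their number matches the bound~\eqref{eqn:hydim}), divides by the discriminant $\Delta_j:=Q_j^{\,m}-\zeta R_j^{\,n}=tQ_j^{\,m}\in\C[Q_j,R_j]$ --- which by the same divisor computation vanishes only at the cusp, to order one, and is non-zero on $\Po$ --- and inducts on the weight, the base cases being pinned down by~\eqref{eqn:hydim}; this both shows that~\eqref{eqn:hydim} is an equality and identifies $\C[Q_j,R_j]$ with the whole space. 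Finally, for $j=1$ we have $\phi_1=\mathrm{Id}$ and $k_1=r_1=1$, so the first equation of~\eqref{eqn:hyrc} reads $P_1'=P_1^2-(\tfrac{N_1}{2mn})^2Q_1^{\,m-2}R_1^{\,n-2}$; combined with the transformation law $P_1(\gamma\tau)=(c\tau+d)^2P_1(\tau)+c(c\tau+d)$ (read off from $X,Y,Z$ for $j=1$, since $t'/t$ and $t'$ are meromorphic weight-$2$ forms and $t''/t'$ has the usual anomaly), this exhibits $P_1$ as a weight-$2$ quasimodular form of depth one; since $\widetilde{M}_*(\Delta(n,m,\infty))$ is generated over $M_*(\Delta(n,m,\infty))$ by any such element and $M_*(\Delta(n,m,\infty))\subseteq\C[Q_1,R_1]$ by the previous step, we conclude $\widetilde{M}_*(\Delta(n,m,\infty))\subseteq\C[P_1,Q_1,R_1]$.

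\emph{Main obstacle.} The computational core is the change-of-variables verification of~\eqref{eqn:hyrc} and~\eqref{eqn:hysol}, which I expect to be routine though lengthy. The genuinely delicate points are: (i) identifying the HGDE variable $z$ with the Hauptmodul $t$ --- this is the one place where the modular embedding is essential, and it is what forces the otherwise multivalued $y_j$ and $Q_j,R_j$ to extend holomorphically to all of $\Po$; and (ii) the surjectivity statement in part (2), where one must verify that the congruence and elliptic-point constraints of Proposition~\ref{prop:dimtws} are matched exactly by the monomials in $Q_j,R_j$, so that the inductive argument closes and the dimension bound is attained.
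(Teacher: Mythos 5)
Your proposal is essentially correct, but it follows a different route from the paper's main proof --- namely, the route that the paper itself records only as an aside in Remark~\ref{rem XYZ}: you pull back Ohyama's system~\eqref{eqn:ohyama} under the explicit monomial/linear substitution $X-Y=R_j^{n-k_j}Q_j^{-r_j}$, $Z-Y=Q_j^{m-r_j}R_j^{-k_j}$, $P_j=\tfrac{m-r_j}{2m}X+\tfrac{mk_j+nr_j}{2mn}Y+\tfrac{n-k_j}{2n}Z$ (your exponents and coefficients agree with those in the remark, since $(X-Y)^{k_j}(Z-Y)^{n-k_j}=Q_j^{N_j}$ and $(X-Y)^{m-r_j}(Z-Y)^{r_j}=R_j^{N_j}$). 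The paper's actual proof never touches the Ohyama system: it \emph{starts} from the closed formulas~\eqref{eqn:hysol}, sets $\Delta_j=Q_j^m-R_j^n=tQ_j^m=R_j^nt/(1-t)$ and $P_j=\tfrac{N_j}{2mn}\tfrac{d\log\Delta_j}{d\phi_j}$, derives the $Q_j'$ and $R_j'$ equations from logarithmic differentiation, and obtains the $P_j'$ equation from the Schwarzian identity $\{t,\phi_j\}=2(dt/d\phi_j)^2\mathcal{Q}_j(t)$. Your approach buys a conceptual explanation of \emph{why} the system is of RRC type (the substitution is exactly the normal form supplied by the $\sl2$-structure of Theorem~\ref{thm 2}); the paper's approach buys shorter verifications and a modular reinterpretation of the $P_j'$ equation via $[\Delta_j,\Delta_j]_2$. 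Your treatment of part (2) (valence formula, monomial count, and the quasimodularity of $P_1$ as a depth-one weight-two form) matches the paper's in substance.

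Two soft spots deserve attention. First, holomorphy of $P_j$ at the elliptic points is not "the same divisor bookkeeping'': $P_j$ is a \emph{sum} of three terms each of which has poles at $e_n$ and $e_m$ when $k_j>1$ or $r_j>1$, and one must check that the polar parts cancel; this is exactly why Remark~\ref{rem XYZ} tabulates the residues of $X_j,Y_j,Z_j$ at $e_n,e_m$ (the linear combination defining $P_j$ annihilates them), so you should carry out that residue computation rather than appeal to the divisor of a single product. Second, your algebraic-independence argument rests on the unproved assertion that $t,\,dt/d\tau_j,\,d^2t/d\tau_j^2$ are algebraically independent over $\C$; this is a classical but nontrivial transcendence statement (Mahler-type), and for $j>1$ it is being invoked for derivatives with respect to the non-global coordinate $\tau_j=\phi_j(\tau)$. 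The paper sidesteps this entirely with an elementary argument: $Q_j^{mN_j}$ and $R_j^{nN_j}$ are linearly independent twisted forms of the same weight with zeros supported at different points, whence algebraic independence by the standard homogeneity trick. Either supply a reference for the transcendence input or switch to the paper's elementary argument at that step.
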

\begin{proof}
\begin{enumerate}[wide=0pt]
\item
\red{Define~$\Delta_j:=Q_j^m-R_j^n$ and~$N_j=mn-mk_j-nr_j$.
Then~$P_j=\frac{N_j}{2mn}\frac{d\log(\Delta_j)}{d\phi_j}$ is the function given in~\eqref{eqn:hysol}. By Lemma~\ref{lem:divmod} it follows that
\[
\mathrm{div}\,Q_j=\frac{1}{m}e_m\,,\quad\mathrm{div}\,R_j=\frac{1}{n}e_n\,,\quad\mathrm{div}\,\Delta_j=\infty\,;
\]
in particular~$Q_j,R_j$ and~$P_j$ are holomorphic in~$\Po$. The behavior of~$P_j,Q_j,R_j$ at~$\infty$ can in fact be deduced from the explicit description~\eqref{eqn:hysol}.
It remains to show that~$P_j,Q_j$ and~$R_j$ satisfy system~\eqref{eqn:hyrc} (we prove the algebraic independence in the next point). Since~$\Delta_j=Q_j^mt$, it follows
\[
P_j\=\frac{N_j}{2mn}\frac{d\log(\Delta_j)}{d\phi_j}\=\frac{N_j}{2mn}\biggl(\frac{t'}{\phi_j't}+m\frac{Q_j'}{\phi_j'Q_j}\biggr)\,.
\]
This identity and~$\frac{t'}{t\phi_j'}=R_j^{n-k_j}Q_j^{-rj}$ imply the differential relation for~$Q_j'$ in~\eqref{eqn:hyrc}.
The relation for~$R_j'$ is proven similarly, after noticing that~$\Delta_j=R_j^nt/(1-t)$.
Finally, the expression~$\Delta_j=\bigl(\frac{dt}{d\phi_j}\bigl)^{\frac{mn}{N_j}}p_j(t)^{-1}$, where~$p_j(t)=t^{\frac{mk_j+nr_j}{N_j}}(1-t)^{\frac{m(n-k_j)}{N_j}}$, implies
\begin{equation}
\label{eqn:Pj}
\begin{aligned}
\frac{d P_j}{d\phi_j}-P_j^2&=\frac{N_j}{2mn}\frac{d^2\log(\Delta_j)}{d\phi_j^2}-\frac{N_j^2}{(2mn)^2}\biggl(\frac{d\log(\Delta_j)}{d\phi_j}\biggr)^2\\
&=\frac{1}{2}\{t,\phi_j\}+N_j\biggl(\frac{dt}{d\phi_j}\biggr)^2\frac{(2mn-N_j)(p_j'(t))^2-2mn\cdot p_j''(t)p_j(t)}{\bigl(2mn\cdot p_j(t)\bigr)^2}\,,
\end{aligned}
\end{equation}
where~$\{t,\phi_j\}$ is the Schwarzian derivative of~$t$ with respect to~$\phi_j$. It is well known that~$\{t,\phi_j\}=2\Bigl(\frac{dt}{d\phi_j}\Bigr)^2\mathcal{Q}_j(t)$, where~$\mathcal{Q}_j$ is as in~\eqref{eqn:Qform} with parameters~$\alpha, \beta$ and~$\gamma$ replaced by
\[
\alpha_j\=\frac{1}{2}\biggl(1+\frac{r_j}{m}-\frac{k_j}{n}\biggr),\quad\beta_j\=\frac{1}{2}\biggl(1-\frac{k_j}{n}-\frac{r_j}{m}\biggr)\quad\gamma_j=1
\]
respectively (see~\eqref{eqn:hypar}). A computation shows that
\[
\mathcal{Q}_j(t)+N_j\frac{(2mn-N_j)(p_j'(t))^2-2mn\cdot p_j''(t)p_j(t)}{\bigl(2mn\cdot p_j(t)\bigr)^2}=-\biggl(\frac{N_j}{2mn}\biggr)\frac{1}{t^2(1-t)}\,.
\]
Since~$\Bigl(\frac{dt}{d\phi_j}\Bigr)^2\frac{1}{t^2(1-t)}=Q_j^{m-2r_j}R_j^{n-2k_j}$, the above computation and~\eqref{eqn:Pj} prove the relation for~$P_j$.
A purely modular proof of the same fact can also be given. The first line of~\eqref{eqn:Pj} can be rewritten as follows
\begin{equation}
\label{eqn:rcD}
\frac{d P_j}{d\phi_j}-P_j^2=\frac{N_j^2}{(2mn)^2}\frac{[\Delta_j,\Delta_j]_2}{(\frac{2mn}{N_j}+1)\Delta_j^2}\,,
\end{equation}
where~$\frac{N_j^2[\Delta_j,\Delta_j]_2}{2mn+N_j}=2mn\frac{d^2\Delta_j}{d\phi_j^2}-(2mn+N_j)\bigl(\frac{d\Delta_j}{d\phi_j}\bigr)^2$. 
The right-hand side of~\eqref{eqn:rcD} is a twisted modular form of pure~$j$-th weight~$4$ with trivial multiplier system (see the discussion after the proof on multiplier systems). One can observe that the only twisted modular form of pure~$j$-th weight~$4$ and trivial multiplier system is given by~$Q_j^{m-2r_j}R_j^{n-2k_j}$ (because there is only one cusp), and deduce the factor~$N_j^2/(2mn)^2$ by comparing the Fourier expansion at~$\infty$ of this modular form with the one in~\eqref{eqn:rcD}.}
\item
It follows from the previous point and the examples in Section~\ref{sec:tmf} that~$Q_j$ and~$R_j$ are twisted modular forms of pure $j$-th weight~$2n/N_j$ and~$2m/N_j$ respectively.

They are algebraically independent. To prove this, consider~$Q_j^{mN_j}$ and~$R_j^{nN_j}$; they have the same integral weight and so transform in the same way with respect the action of~$\Delta(n,m,\infty)$. Moreover they are linearly independent, since the zeros of~$Q_j^m$ are concentrated in~$e_m$ and the zeros of~$R_j^n$ are concentrated in~$e_n$. A standard trick (see the proof of Proposition 4, pag. 15 of~\cite{bhgz}) shows that~$Q_j^{mN_j}$ and~$R_j^{nN_j}$, and so~$Q_j,R_j$, are algebraically independent.

In order to prove that~$\C[Q_j,R_j]$ is the space on the right-hand side of~\eqref{eqn:freealg}, we compute the dimension of the graded part~$\C[Q_j,R_j]_w$ and compare it with~\eqref{eqn:hydim} for~$\vec{w}=(0,\dots,0,w_j=w,0,\dots,0)$.
A basis of~$\C[Q_j,R_j]_w$ is given by monomials of the form~$Q_j^aR_j^b$ where~$a\cdot2n/N_j+b\cdot2m/N_j=w$. The number of such monomials is the number of non-negative solutions~$(a,b)$ of~$a\cdot2n+b\cdot 2m=wN_j$.
Let~$(a_0,b_0)$ be such a solution with~$a_0$ minimal. This condition implies in particular that~$a_0<m$. All the other solutions are of the form~$(a_0+xm,b_0-xn)$ for~$x\in\Z_{\ge0}$ with~$2n(a_0+xm)\le N_jw$. It follows that~$x\le wN_j/(2mn)-a/m\in\Z_{\ge0}$. A simple analysis shows that the maximum of such~$x$ is~$\lfloor\tfrac{wN_j}{2mn}\rfloor$ or~$\lfloor\tfrac{wN_j}{2mn}\rfloor-1$ depending on~$2na_0\equiv wN_j \mod 2mn$ holds or not. Accordingly, the number of solutions is~$1+\lfloor\tfrac{wN_j}{2mn}\rfloor$ or~$\lfloor\tfrac{wN_j}{2mn}\rfloor$, as in~\eqref{eqn:hydim}.
\end{enumerate}
\end{proof}

\begin{rem} \label{rem XYZ}
\red{The first part of Theorem~\ref{thm:hymain} could be deduced by applying the following transformations to the functions~$X_j,Y_j,Z_j$ of the Ohyama system~\eqref{eqn:ohyama} associated to~$\mathcal{T}\bigl(\tfrac{k_j}{n},\tfrac{r_j}{m}\bigr)$
\[
\begin{aligned}
P_j&\:=\frac{1}{2mn}\Bigl(n(m-r_j)X_j+(mk_j+nr_j)Y_j+m(n-k_j)Z_j\Bigr)\,,\\
Q_j&\:=\Bigl((X_j-Y_j)^{k_j}(Z_j-Y_j)^{n-k_j}\Bigr)^{1/N_j}\,,\quad R_j\:=\Bigl((X_j-Y_j)^{m-r_j}(Z_j-Y_j)^{r_j}\Bigr)^{1/N_j}\,.
\end{aligned}
\]
The fact that~$P_j,Q_j$ and~$R_j$ are holomorphic on~$\Po$ follows by observing that
\begin{equation*}
\label{eqn:hyres}
\begin{aligned}
\Res_{e_n}(X_j)&=\frac{n-k_j}{2}\,,\quad\Res_{e_n}(Y_j)=\frac{n-k_j}{2}\,,\quad\Res_{e_n}(Z_j)=-\frac{n+k_j}{2}\,,\\
\Res_{e_m}(X_j)&=-\frac{m+r_j}{2}\,,\quad\Res_{e_m}(Y_j)=\frac{m-r_j}{2}\,,\quad\Res_{e_m}(Z_j)=\frac{m-r_j}{2}\,.
\end{aligned}
\end{equation*}
The modularity properties follow from the well known identity~$y_j(\tau)=\frac{t'(\tau)}{\phi_j'(\tau)}$, where~$y_j$ is a solution of the~$Q$-form of the HGDE and its relation with~$X_j,Y_j$ and~$Z_j$ is given in~\eqref{eqn:XYZ}.}
\end{rem}

We discuss a few consequences of the above theorem. In order to do this, we need to introduce multiplier systems first.
Fix a branch of the logarithm. The automorphy factor~$J_\phi$ of weight~$\vec{w}=(w_1,\dots,w_h)\in\Q^h$ defined in Section~\ref{sec:tmf} can be represented as
\[
J_\phi(\gamma,\tau)\=v(\gamma)\prod_{j=1}^h{(c_j\phi_j(\tau)+d_j)^{w_j}}\,\quad\gamma\in\Delta(n,m,\infty)\,,
\]
where~$v\colon\Delta(n,m,\infty)\to\C$ is a function with~$|v|=1$ called~\emph{multiplier system}. Thanks to the cocycle relation, it is enough to specify the values of~$v$ at the generators~$T,S$ of~$\Delta(n,m,\infty)$ to determine it completely; in particular~$v$ is not in general a group homomorphism.  Since the functions we consider are~$T$-invariant, we always have $v(T)=1$ and we need only to specify the value of~$v$ in~$S$.

The modular forms related to system~\eqref{eqn:hyrc} are of pure~$j$-th weight~$w=\hat{w}/N_j$ where~$\hat{w}\in\Z_{\ge0}$. It follows that we can describe them only in terms of powers of~$J_{\phi_j}(\gamma,\tau)^{1/N_j}$ (see the definition of~$J_{\phi_j}$ in~\eqref{eqn:Jj}) and multiplier systems; the space of modular forms of pure~$j$-th weight~$w$ and multiplier system~$v$ will be denoted by~$M_{J_{\phi_j}^w}\bigr(\Delta(n,m,\infty),\phi,v(S)\bigr)$.
\begin{coro}[Dimension formula]
\label{cor:dim}
Let~$\vec{w}_j\in\Q^h$ be of pure $j$-th weight~$w$. Recall that~$M_{\vec{w}_j}(\Delta(n,m,\infty),\phi)$ is the space of modular forms with respect to all automorphy factors of fixed weight~$\vec{w}_j$. Then
\[
\begin{aligned}
\bigoplus_{\vec{w}_j\in\Q^h}{M_{\vec{w}_j}\bigl(\Delta(n,m,\infty),\phi\bigr)}&\=\bigoplus_{\overset{(a,b)\in\Z_{\ge0}}{2an+2bm=wN_j}}M_{J_{\phi_j}^w}\Bigl(\Delta(n,m,\infty),\phi,e^{-\frac{2\pi i}{N_j}(ak_j+b(m-r_j))}\Bigr)\\
&\=\Biggl\{f\in\mathcal{H}_0(\Po):
\begin{aligned}
&f(T\tau)=f(\tau)\text{ and }\\
&f(S_n\tau)=f(\tau)\phi_j(\tau)^we_n^{-\bigl(2\mathrm{ord}_{e_n}(f)+k_jw\bigr)}\;
\end{aligned}
\Biggr\}\,,
\end{aligned}
\]
where~$\mathcal{H}_0(\Po)$ denotes the set of complex-valued holomorphic function on~$\Po$ of moderate growth at~$\infty$.
Moreover, we have the dimension formula
\[
\dim\bigoplus_{\vec{w}_j\in\Q^h}{M_{\vec{w}_j}\bigl(\Delta(n,m,\infty),\phi\bigr)}\=
\begin{cases}
1+\Bigl\lfloor\frac{wN_j}{2mn}\Bigr\rfloor &\text{if }\Bigl\{\tfrac{wN_j}{2mn}\Bigr\}=\tfrac{a}{n}+\frac{b}{m}\\
\Bigl\lfloor\frac{wN_j}{2mn}\Bigr\rfloor & \text{otherwise},
\end{cases}
\]
where~$\{x\}\in[0,1)$ is the fractional part and~$a,b\in\Z_{\ge0}$.
\end{coro}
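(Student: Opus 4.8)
The plan is to derive all three assertions from Theorem~\ref{thm:hymain}(2), which identifies the left-hand side with the free polynomial algebra $\C[Q_j,R_j]$ and exhibits, in weight $w$, the basis of monomials $Q_j^aR_j^b$ with $a,b\in\Z_{\ge0}$ and $2an+2bm=wN_j$. For the first equality I would compute the multiplier system at $S_n$ of each such monomial (the value at $T$ being trivial, since every form in play is $T$-invariant). Because $t$ is $\Delta(n,m,\infty)$-invariant, $t'$ transforms by $(c\tau+d)^2$, and $\phi_j'$ by $(c\tau+d)^2/(c^{\sigma_j}\phi_j+d^{\sigma_j})^2$, the powers $Q_j^{N_j}$ and $R_j^{N_j}$ transform with the honest automorphy factors $(c^{\sigma_j}\phi_j+d^{\sigma_j})^{2n}$ and $(c^{\sigma_j}\phi_j+d^{\sigma_j})^{2m}$ and trivial multiplier; hence $v_{Q_j}(S_n),v_{R_j}(S_n)$ are $N_j$-th roots of unity, and the only source of nontriviality is the fractional powers of $1-t$ in~\eqref{eqn:hysol}. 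Expanding $Q_j,R_j$ in a local coordinate at $e_n$ in which $S_n$ acts as a rotation — using that $1-t$, $t'$ and $\phi_j'$ vanish there to orbifold orders $1$, $1-\tfrac1n$ and $\tfrac{k_j-1}{n}$ respectively by Lemma~\ref{lem:divmod} — and matching with the automorphy factor gives $v_{Q_j^aR_j^b}(S_n)=e^{-\frac{2\pi i}{N_j}(ak_j+b(m-r_j))}$. Grouping the monomial basis of $\C[Q_j,R_j]_w$ by this value, and using that subspaces of twisted modular forms with distinct multiplier systems are in direct sum, yields the first displayed equality.

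For the second equality I would use the presentation $\Delta(n,m,\infty)=\langle S_n,T\mid S_n^n=(S_nT)^m=1\rangle$. A holomorphic $f$ of moderate growth at $\infty$ is a twisted modular form of pure $j$-th weight $w$ for \emph{some} automorphy factor exactly when $f(T\tau)=f(\tau)$ and $f(S_n\tau)=v(S_n)J_{\phi_j}(S_n,\tau)^wf(\tau)$ for a constant $v(S_n)$ compatible with $S_n^n=1$; the relation $(S_nT)^m=1$ imposes no further condition since $J_{\phi_j}$ is a genuine cocycle. It remains to rewrite this $S_n$-law as the displayed one: expanding $f$ in the rotation coordinate $z$ at $e_n$, its leading exponent is $n\cdot\mathrm{ord}_{e_n}(f)$ in the normalization of the valence formula~\eqref{eqn:hydeg}, and comparing the two sides near $z=0$ (using $\phi_j(e_n)=e^{\pi i k_j/n}$ and the linearizing multiplier of $S_n$ at $e_n$) forces $v(S_n)=e_n^{-(2\mathrm{ord}_{e_n}(f)+k_jw)}$. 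Thus the displayed transformation is precisely the condition that $f$ be a twisted modular form of pure $j$-th weight $w$, i.e.\ that $f\in\C[Q_j,R_j]$ by~\eqref{eqn:freealg}.

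The dimension formula is then a lattice-point count identical to the one at the end of the proof of Theorem~\ref{thm:hymain}(2): if $(a_0,b_0)$ is the solution of $2an+2bm=wN_j$ with $a_0$ minimal, then $a_0<m$ and all solutions are $(a_0+xm,b_0-xn)$ with $0\le x\le wN_j/(2mn)-a_0/m$; the number of admissible $x$ is $1+\lfloor wN_j/(2mn)\rfloor$ or $\lfloor wN_j/(2mn)\rfloor$ according as $\{wN_j/(2mn)\}=\tfrac{a}{n}+\tfrac{b}{m}$ for some $a,b\in\Z_{\ge0}$ or not. Equivalently, this exhibits the upper bound of Proposition~\ref{prop:dimtws} as sharp.

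The step I expect to be the main obstacle is the multiplier-system bookkeeping in the first two paragraphs: choosing the local coordinate at $e_n$ compatibly with the fixed branch of the logarithm used to define $Q_j,R_j$ and the powers $J_{\phi_j}^w$, and verifying that it is exactly $2\mathrm{ord}_{e_n}(f)+k_jw$ — rather than some variant — that appears in the exponent of $e_n$. Everything else is a direct consequence of Theorem~\ref{thm:hymain} and Lemma~\ref{lem:divmod}, or a routine count.
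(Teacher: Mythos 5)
Your proposal is correct and follows the same overall strategy as the paper: determine the multiplier systems of $Q_j$ and $R_j$, group the monomial basis of $\C[Q_j,R_j]_w$ by multiplier, identify the second description with the condition of being a twisted modular form of pure $j$-th weight, and reuse the lattice-point count from the proof of Theorem~\ref{thm:hymain}. Two points of divergence are worth recording. For the multipliers, the paper sidesteps the local-expansion bookkeeping that you flag as the main obstacle by evaluating at fixed points where the relevant form does not vanish: the relation $Q_j(e_n)=Q_j(Se_n)=v_{Q_j}(S)\phi_j(e_n)^{2n/N_j}Q_j(e_n)$ together with $\phi_j(e_n)=e^{\pi i k_j/n}$ gives $v_{Q_j}(S)=e^{-2\pi ik_j/N_j}$ at once, and since $R_j$ vanishes at $e_n$ the paper instead uses the element $T^{-1}S$, which fixes $e_m$ where $R_j(e_m)\neq0$ and $\phi_j(e_m)=e^{\pi i(m-r_j)/m}$. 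This is cleaner than expanding both generators at $e_n$ and avoids choosing a linearizing coordinate. For the second displayed equality, the paper's route is the reverse of yours: it only checks that the generators $Q_j,R_j$ belong to the set (using $\mathrm{ord}_{e_n}(Q_j)=0$ and $\mathrm{ord}_{e_n}(R_j)=1$) and then uses the valence formula~\eqref{eqn:hydeg} and the bounds of Proposition~\ref{prop:dimtws} to conclude that the set is no larger than $\C[Q_j,R_j]$, whereas you show directly that the displayed condition characterizes pure $j$-th weight twisted forms and quote~\eqref{eqn:freealg}; both work, but yours puts more weight on the local analysis at $e_n$. One normalization caveat in that analysis: in the paper's convention $\mathrm{ord}_{e_n}(f)$ is the order of vanishing in the coordinate $\tau$ (so $\mathrm{ord}_{e_n}(R_j)=1$ for a simple zero, contributing $1/n$ to the valence formula), hence the leading exponent of $f$ in the linearizing coordinate $z$ at $e_n$ is $\mathrm{ord}_{e_n}(f)$ itself rather than $n\cdot\mathrm{ord}_{e_n}(f)$; with that correction your comparison of leading terms does produce the exponent $2\mathrm{ord}_{e_n}(f)+k_jw$.
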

\begin{proof}
The multiplier systems~$v_{Q_j},v_{R_j}$, associated to~$Q_j,R_j$ respectively, are easily determined by considering the fixed points of the action of~$\Delta(n,m,\infty)$. For instance, the relation
\[
Q_j(e_n)\=Q_j(Se_n)\=v_{Q_j}(S)\phi_j(e_n)^{2n/N_j}Q_j(e_n)\,,
\]
together with the identity~$\phi_j(e_n)=e^{\pi ik_j/n}$, imply that~$v_{Q_j}(S)=e^{(-2\pi i k_j)/N_j}$.
Similarly, by considering the transformation~$T^{-1}S$, which fixes~$e_m$, we can compute the multiplier system for~$R_j$. It is given by~$v_{R_j}(S)\=e^{-2\pi i(m-r_j)/N_j}$.
It follows that the multiplier system of a modular form of pure~$j$-th weight~$w$, that by Theorem~\ref{thm:hymain} is a weighted homogeneous polynomial in~$Q_j,R_j$, is of the form~$v_{Q_j}(S)^av_{R_j}(S)^b=e^{-2\pi i(ak_j+b(m-r_j))/N_j}$ if~$2(an+bm)=w$. This proves the first identity.

To prove the second identity, it is enough to check that the generators~$Q_j$ and~$R_j$ belong to the set. This is an immediate consequence of the the identities~$\mathrm{ord}_{e_n}(Q_j)=0$ and~$\mathrm{ord}_{e_n}(R_j)=1$ proved in Theorem~\ref{thm:hymain}. By a dimension argument (any~$f$ in the set is subject to the valence formula~\eqref{eqn:hydeg}, and then to the bounds in Proposition~\ref{prop:dimtws}) we see that~$Q_j$ and~$R_j$ generate the set; this concludes the proof of the second identity.

The dimension formula is proven in the second part of the proof of Theorem~\ref{thm:hymain}.
\end{proof}

\begin{rem}{\rm
Theorem~\ref{thm:hymain} was known in the case~$n=2$ and~$k=r=1$ since the work of Hecke~\cite{Hecke} (see also~\cite[Chapter 5]{BerndtKnopp}). The second description of the modular space in Corollary~\ref{cor:dim} is also a generalization of the one used by Hecke in his book.}
\end{rem}
\begin{rem}{\rm
In~\cite{dgms}, Doran et al. gave basis for the space of modular forms of even integral weight~$2k$ for the groups~$\Delta(n,m,\infty)$ in terms of certain functions~$E^{(1)}_{2l,\mathfrak{t}},3\le l\le n$ and~$E^{(2)}_{2l,\mathfrak{t}},2\le l\le m$. These functions are constructed from solutions of the Halphen system (see~\cite[Equation (1.2) ]{dgms}) for a suitable choice of parameters.
By comparing the divisors, we can express the functions~$E^{(1)}_{2l,\mathfrak{t}},E^{(2)}_{2l,\mathfrak{t}}$ in terms of the functions~$Q=Q_1,R=R_1$ in Theorem~\ref{thm:hymain} as follows
\[
\begin{aligned}
E^{(1)}_{2l,\mathfrak{t}}&\=Q^{ml-m-l}R^{n-l}\,,\quad 3\le l\le n\,,\\
E^{(2)}_{2l,\mathfrak{t}}&\=Q^{m-l}R^{nl-n-l}\,,\quad2\le l\le m\,.
\end{aligned}
\]
In particular, the algebraic relations between the~$E^{(1)}_{2l,\mathfrak{t}},E^{(2)}_{2l,\mathfrak{t}}$ in~\cite{dgms} can be easily determined from the above identities, $Q,R$ being algebraically independent. Moreover, differential relations between these functions can be determined from the above identities and the system in Theorem~\ref{thm:hymain}.}
\end{rem}

In the non-twisted case~($j=1$), the elements~$Q=Q_1,R=R_1,$ and~$\Delta=\Delta_1$ determine a Rankin-Cohen structure on~$\C[Q,R]$. More precisely, from Theorem~\ref{thm:hymain} and Theorem~\ref{thm 1} we immediately get the following result.
\begin{coro}[RC structures]
\label{cor:RC}
Let~$n\le m$ be such that~$1/n+1/m<1$. System~\eqref{eqn:hyrc} in the case~$j=1$ reduces to the RRC system
\begin{equation}
\label{eqn:rcmod}
\left\{
\begin{aligned}
P'&\=P^2\-\Bigl(\frac{mn-m-n}{2nm}\Bigr)^2Q^{m-2}R^{n-2}\\
Q'&\=\frac{2n}{mn-m-n}PQ\-\frac{R^{n-1}}{m}\\
R'&\=\frac{2m}{mn-m-n}PR\-\frac{Q^{m-1}}{n}\,.
\end{aligned}
\right.
\end{equation}
which defines a Rankin-Cohen structure on the space~$\C[Q,R]$.
This has the form described in Remark~\ref{rmk:Delta} by setting~$t_1=P=\Delta'/\Delta, t_2=Q, t_3=R$, and~$F=\Delta$.
\end{coro}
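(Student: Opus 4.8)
The plan is to derive the statement by specializing Theorem~\ref{thm:hymain} to $j=1$ and then invoking Theorem~\ref{thm 1} and Remark~\ref{rmk:Delta}; no genuinely new argument is needed, only bookkeeping of weights and normalizations.

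\textbf{Step 1 (Specialization).} First I would observe that the normalization $\phi_1=\mathrm{Id}$ forces the conformal map $\phi_1\colon\mathcal{T}(n,m)\to\mathcal{T}(\tfrac{k_1}{n},\tfrac{r_1}{m})$ to be the identity, hence $k_1=r_1=1$ and $\phi_1'\equiv1$. Substituting these values into~\eqref{eqn:hyrc} gives exactly system~\eqref{eqn:rcmod}, with $P:=P_1$, $Q:=Q_1$, $R:=R_1$. I would then check that~\eqref{eqn:rcmod} has the shape~\eqref{eq mods}: writing $N:=N_1=mn-m-n$, the forms $Q$ and $R$ have degrees $\tfrac{2n}{N}$ and $\tfrac{2m}{N}$ (their pure first weights, by Theorem~\ref{thm:hymain}(2)), $P$ has degree $2$, the correction terms $\prs_1=-\bigl(\tfrac{N}{2nm}\bigr)^2Q^{m-2}R^{n-2}$, $\prs_2=-\tfrac1m R^{n-1}$, $\prs_3=-\tfrac1n Q^{m-1}$ all lie in $\C[Q,R]$ (using $n\ge2$ and $m\ge3$, which follow from $\tfrac1n+\tfrac1m<1$) and do not involve $P$, and each is quasi-homogeneous of the required degree $\wss_j+2$; for instance $\deg\prs_1=(m-2)\tfrac{2n}{N}+(n-2)\tfrac{2m}{N}=\tfrac{4(mn-m-n)}{N}=4$, and the remaining two are equally short computations using $N=mn-m-n$.

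\textbf{Step 2 (RC structure).} Since $Q$ and $R$ are algebraically independent (Theorem~\ref{thm:hymain}(2)), $\C[Q,R]$ is a free graded $\C$-algebra and Remark~\ref{rem alg dep} is not needed. I would then apply Theorem~\ref{thm 1}(1): $\C[Q,R]$ carries a canonical RC structure whose Ramanujan--Serre derivation is given on generators by $\partial Q=\prs_2=-\tfrac1m R^{n-1}$ and $\partial R=\prs_3=-\tfrac1n Q^{m-1}$, with auxiliary element $\Phi=\prs_1=-\bigl(\tfrac{N}{2nm}\bigr)^2Q^{m-2}R^{n-2}$. This already yields that~\eqref{eqn:rcmod} defines a Rankin--Cohen structure on $\C[Q,R]$ (and that $\C[P,Q,R]$ is the associated standard RC algebra).

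\textbf{Step 3 (The $F=\Delta$ description).} It remains to recognize this canonical structure as the one produced in Remark~\ref{rmk:Delta}. I would take $F:=\Delta:=\Delta_1=Q^m-R^n$, which is homogeneous of degree $\wss:=\tfrac{2mn}{N}$ (the expressions $m\cdot\tfrac{2n}{N}$ and $n\cdot\tfrac{2m}{N}$ coinciding) and is not a zero divisor in the polynomial ring $\C[Q,R]$. The one substantive computation is
\[
\partial\Delta\=m Q^{m-1}\,\partial Q\-n R^{n-1}\,\partial R\=-Q^{m-1}R^{n-1}+Q^{m-1}R^{n-1}\=0,
\]
which is the algebraic counterpart of $\mathrm{div}\,\Delta=\infty$ from Theorem~\ref{thm:hymain}, i.e.\ the discriminant analogue is killed by $\partial$. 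From $\partial\Delta=0$, passing to the standard RC algebra $\widetilde{M}=\C[Q,R][\phi]$ with $D\Delta=\partial\Delta+\wss\phi\Delta=\wss\phi\Delta$, a direct expansion of the brackets gives $[\Delta,f]_1=\wss\Delta\,\partial f$ for every $f$ and $[\Delta,\Delta]_2=\wss^2(\wss+1)\Phi\Delta^2$. Hence the hypotheses $[\Delta,\C[Q,R]]_1\subset\C[Q,R]\cdot\Delta$ and $[\Delta,\Delta]_2\in\C[Q,R]\cdot\Delta^2$ of Remark~\ref{rmk:Delta} hold, and they return $\partial f=[\Delta,f]_1/(\wss\Delta)$ and $\Phi=[\Delta,\Delta]_2/(\wss^2(\wss+1)\Delta^2)$. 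Thus~\eqref{eqn:rcmod} coincides with system~\eqref{eq mods2} for $F=\Delta$, $t_2=Q$, $t_3=R$, $t_1=P$; and recall from Theorem~\ref{thm:hymain} that $P=P_1=\tfrac{N}{2mn}\,\Delta'/\Delta=\tfrac1\wss\,\Delta'/\Delta$, matching the identification $t_1=P=\Delta'/\Delta$ in the statement.

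\textbf{Main difficulty.} There is no real obstacle: everything follows from results already established. The only genuine content is the bookkeeping of Step~1 (verifying the correction terms of~\eqref{eqn:rcmod} are $P$-free quasi-homogeneous polynomials of degree $\wss_j+2$) and the identity $\partial\Delta=0$ of Step~3, which is exactly what makes the clean $F=\Delta$ description available. One should only be careful with the normalization constant $\tfrac1\wss$ relating $P$ and $\Delta'/\Delta$.
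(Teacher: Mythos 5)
Your proposal is correct and follows exactly the route the paper intends: the corollary is stated there as an immediate consequence of Theorem~\ref{thm:hymain} and Theorem~\ref{thm 1}, and your Steps 1--3 simply make that specialization and the weight bookkeeping explicit. You also rightly flag that the literal identification $P=\Delta'/\Delta$ in the statement should carry the normalization $P=\tfrac{1}{\wss}\Delta'/\Delta=\tfrac{N}{2mn}\Delta'/\Delta$ (consistent with the classical case $P=E_2/12$, $\Delta'/\Delta=E_2$), and the key identity $\partial\Delta=0$ is precisely what makes the $F=\Delta$ description of Remark~\ref{rmk:Delta} applicable.
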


We stress that the Rankin-Cohen structure~$\bigl(\C[Q,R],[\,,]_n\bigr)$, with brackets induced by system~\eqref{eqn:rcmod}, is in general not the classical RC structure on a ring of modular forms, because the generators~$Q,R$ (and $\Delta$) are associated to different automorphy factors and~$\C[Q,R]$ is not a usual ring of modular forms. However, the brackets on~$\C[Q,R]$ being induced by the derivation~$d/d\tau$, their restriction to classical subrings of modular forms~$M_{J}(\Delta(n,m,\infty))\subset\C[Q,R]$ coincide with the RC brackets defined by Cohen.

\begin{rem}{\rm
Theorem~\ref{thm:hymain} does not give an RRC system with derivation~$d/d\tau$ in the case~$j>1$. The reason is that~$\C[Q_j,R_j]$ is the space of modular forms of pure~$j$-th weight, but the differentiation~$d/d\tau$ does not preserves the pure~$j$-th weight if~$j>1$ (see the computation of~$\phi_j'$ in Section~\ref{sec:tmf}).
On the other hand, the natural derivation considered in~\eqref{eqn:hyrc}, that is $d/d\phi_j=(\phi_j')^{-1}d/d\tau$, preserves the pure~$j$-th weight, but it does not preserve holomorphicity (for instance, in the equation for~$\red{dQ_j/d\phi_j}$ one gets~$R_j^{n-k_j}Q_j^{1-r_j}$ that is in general not holomorphic). In this case, one can define a RC structure on~$\C[Q_j,R_j]$ from Theorem~\ref{thm:hymain} at the cost of inverting the functions~$Q_j$ and~$R_j$. }
\end{rem}

The last corollaries of Theorem~\ref{thm:hymain} concern the Gauss hypergeometric function
\begin{equation}
\label{eqn:ghg}
F(\alpha,\beta;\gamma;t)\:=\sum_{n=0}^\infty{\frac{(\alpha)_n(\beta)_n}{(\gamma)_n(n!)^2}t^n}\,,
\end{equation}
where~$(x)_n:=x(x+1)\cdots(x+n-1)$.
\begin{coro}[\red{Inversion formulae}]
\label{cor:hgde}
Let~$n,m,k_j,r_j,$ and~$Q_j,R_j$ be as in Theorem~\ref{thm:hymain} and let~$N_j:=mn-nr_j-mk_j$.
Then
\[
F\biggl(\frac{N_j+2nr_j}{2mn},\frac{N_j}{2mn};1;\frac{Q_j(\tau)^m-R_j(\tau)^n}{Q_j(\tau)^m}\biggr)\=Q_j(\tau)^{N_j/2n}\,.
\]
\end{coro}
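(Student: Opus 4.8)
The plan is to recognise the left-hand side as the holomorphic solution at $z=0$ of the hypergeometric equation attached to the triangle $\mathcal{T}_j=\mathcal{T}(\tfrac{k_j}{n},\tfrac{r_j}{m})$, and then to read that solution off the explicit formula \eqref{eqn:hysol} for $Q_j$ via the Wronskian of the $Q$-form.

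First I would normalise the data. By the identity $\Delta_j=Q_j^m\,t$ established in the proof of Theorem~\ref{thm:hymain} (where $t$ is the Hauptmodul normalised as in \eqref{eqn:divhaupt}), the argument of $F$ is the Hauptmodul itself: $\tfrac{Q_j^m-R_j^n}{Q_j^m}=\tfrac{\Delta_j}{Q_j^m}=t$. A direct check with $N_j=mn-mk_j-nr_j$ and \eqref{eqn:hypar} gives $\tfrac{N_j}{2mn}=\beta_j$ and $\tfrac{N_j+2nr_j}{2mn}=\alpha_j$, where $\alpha_j,\beta_j,\gamma_j=1$ are the parameters of $\mathcal{T}_j$ appearing in the proof of Theorem~\ref{thm:hymain}. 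So the claim is exactly $F(\alpha_j,\beta_j;1;t)=Q_j^{N_j/2n}$. Since $\gamma_j=1$, $u_j:=F(\alpha_j,\beta_j;1;z)$ is the unique holomorphic solution at $z=0$ of the hypergeometric equation of $\mathcal{T}_j$ with $u_j(0)=1$, and by \eqref{eqn:y} the function $y_j:=z^{1/2}(1-z)^{(\alpha_j+\beta_j)/2}u_j$ solves the $Q$-form $\tfrac{d^2y}{dz^2}=\mathcal{Q}_j(z)y$, with $\tau_j=\phi_j$ the ratio of $y_j$ with the logarithmic solution (normalised so $\tau_j\to i\infty$ as $z\to 0$). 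As the $Q$-form has no first-order term its Wronskian $W$ is constant, and the standard computation gives $y_j^2=W\,\tfrac{dz}{d\tau_j}=W\,\tfrac{t'}{\phi_j'}$. Substituting back and using $\alpha_j+\beta_j=1-\tfrac{k_j}{n}$,
\[
F(\alpha_j,\beta_j;1;t)^2=\frac{y_j^2}{t(1-t)^{\,1-k_j/n}}=W\,\frac{t'}{t\,\phi_j'(1-t)^{\,1-k_j/n}}=W\,Q_j^{N_j/n},
\]
the last equality being precisely \eqref{eqn:hysol}. Taking the square root, it remains to pin the constant: evaluating at the cusp, where $F(\alpha_j,\beta_j;1;0)=1$ and the leading behaviour of $t\sim cq$ and $\phi_j=\lambda^{\sigma_j}\tau/\lambda+O(q)$ (Section~\ref{sec:tmf} and the proof of Theorem~\ref{thm:hymain}) fixes both $W$ and $Q_j(i\infty)$, one checks $W\,Q_j(i\infty)^{N_j/n}=1$, so $F(\alpha_j,\beta_j;1;t)=Q_j^{N_j/2n}$ holds with the branch of the root that equals $1$ at $i\infty$, and extends by analytic continuation. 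For $(n,m)=(2,3)$ this is the classical relation $F(\tfrac1{12},\tfrac5{12};1;1728/j)=E_4^{1/4}$.

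The only non-formal point is the bookkeeping of normalisations (of $t$, of the components $\phi_j$, and of the prime $'$) needed to see that the Wronskian constant is exactly $1$; everything else is a rearrangement of \eqref{eqn:y}, the Wronskian identity, and \eqref{eqn:hysol}. An alternative, computation-heavier route avoiding periods is to derive from the differential system \eqref{eqn:hyrc} that $Q_j^{N_j/2n}$, viewed as a function of $t$, satisfies the same hypergeometric equation as $F(\alpha_j,\beta_j;1;t)$ up to the substitution \eqref{eqn:y}, and then match the two holomorphic solutions at $t=0$.
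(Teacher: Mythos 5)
Your proposal is correct and follows essentially the same route as the paper's own proof: identify the parameters as $\alpha_j,\beta_j,\gamma_j=1$ of the triangle $\mathcal{T}_j$, pass to the $Q$-form via \eqref{eqn:y}, invoke the constancy of the Wronskian to get $y_j^2=c\,t'/\phi_j'$, substitute the explicit formula \eqref{eqn:hysol} for $Q_j$ together with $t=(Q_j^m-R_j^n)/Q_j^m$, and fix the constant by comparing $q$-expansions at the cusp.
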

\begin{proof}
\red{The function $F\bigl(\frac{N_j+2nr_j}{2mn},\frac{N_j}{2mn};1;z\bigr)$ is a holomorphic solution in~$z=0$ of the differential equation in~\eqref{eqn:hgde} with parameters~$\alpha=\frac{N_j+2nr_j}{2mn}$, $\beta=\frac{N_j}{2mn}$, and~$\gamma=1$. From~\eqref{eqn:y} it follows that
\begin{equation}
\label{eqn:cor1}
y_j(z)\=z^{\frac{1}{2}}(1-z)^{\frac{n-k_j}{2n}}F\biggl(\frac{N_j+2nr_j}{2mn},\frac{N_j}{2mn};1;z\biggr)
\end{equation}
is a solution of the~$\mathcal{Q}$-form of the hypergeometric differential equation. It is well known that~$y_j(\tau)^2=c\frac{t'(\tau)}{\phi_j'(\tau)}$, where~$t$ is a Hauptmodul for the triangle group~$\Delta(n,m,\infty)$ and~$\phi=(\phi_j)_j$ the corresponding modular embedding (this follows from the fact that the Wronskian of the~$\mathcal{Q}$-form of the HGDE is a constant~$c$).
From the description of~$Q_j$ and~$R_j$ in terms of~$t$ and~$\phi_j$ in Theorem~\ref{thm:hymain} we have~$t=(Q_j^m-R_j^n)/Q_j^m$; by combining then~\eqref{eqn:cor1} and the expression for~$y_j(\tau)^2$ we get
\[
F\biggl(\frac{N_j+2nr_j}{2mn},\frac{N_j}{2mn};1;\frac{Q_j(\tau)^m-R_j(\tau)^n}{Q_j(\tau)^m}\biggr)^2=c\frac{t'(\tau)}{\phi_j'(\tau)t(\tau)(1-t(\tau))^{\frac{n-k_j}{n}}}=c\cdot Q_j(\tau)^{N_j/n}\,.
\]
and~$c=1$ follows by comparing the~$q$-expansion of both sides.}
\end{proof}
In the case~$k_1=r_1=1$ the statement is simply giving the solution of the uniformizing differential equation for~$\Po/\Delta(n,m,\infty)$ with respect to the Hauptmodul~$(Q^m-R^n)/Q^m$. As an example, in the case~$(n,m)=(2,3)$ Corollary~\ref{cor:hgde} specializes to the classical formula
\[
F\biggl(\frac{1}{12},\frac{5}{12};1;\frac{1728}{j}\biggr)\=E_4^{1/4}\,.
\]
where~$j$ is the~$j$-invariant and~$E_4$ the normalized weight 4 Eisenstein series on~$\mathrm{SL}_2(\Z)$.

In Corollary~\ref{cor:hgde} the rational parameters~$\alpha,\beta$ of the hypergeometric function~\eqref{eqn:ghg} depend on~$k_j,r_j$, while the parameter~$(Q_j^m-R_j^n)/Q_j^m$ does not. Set~$Q:=Q_1,R:=R_1$. Since~$\frac{Q_j^{m}-R_j^n}{Q_j^m}=\frac{Q^{m}-R^n}{Q^m}$, Corollary~\ref{cor:hgde} then let us evaluate the hypergeometric function~\eqref{eqn:ghg} in different points~$\alpha,\beta$ with respect to the same parameter~$(Q^m-R^n)/Q^m$. For instance, when~$(n,m)=(2,5)$ we have
\[
F\biggl(\frac{7}{20},\frac{3}{20};1;\frac{Q(\tau)^5-R(\tau)^2}{Q(\tau)^5}\biggr)^2=Q(\tau)^\frac{3}{2}\,,\quad F\biggl(\frac{9}{20},\frac{1}{20};1;\frac{Q(\tau)^5-R(\tau)^2}{Q(\tau)^5}\biggr)^2=\frac{Q(\tau)^{\frac{3}{2}}}{\phi'(\tau)}\,,
\]
corresponding to the possible values~$(k,r)=(1,1)$ and~$(k,r)=(1,2)$ respectively. More generally, we have the following result.
\begin{coro}[\red{Inversion formulae II}]
\label{cor:hgde'}
Let~$n,m,k_j,r_j$ and~$(\phi_j)_j$ be as in Theorem~\ref{thm:hymain} and denote~$Q=Q_1,R=R_1$. Then, for~$j=1,\dots,h,$
\[
F\biggl(\frac{N_j+2nr_j}{2mn},\frac{N_j}{2mn};1;\frac{Q(\tau)^m-R(\tau)^n}{Q(\tau)^m}\biggr)^2\=\frac{Q(\tau)^{\tfrac{N_j}{n}+r_j-1}R(\tau)^{k_j-1}}{\phi_j'(\tau)}\,.
\]
\end{coro}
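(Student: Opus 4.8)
The plan is to deduce the identity from Corollary~\ref{cor:hgde}, which already evaluates the left-hand side once one observes that the expansion parameter is independent of~$j$. Set $\Delta_j:=Q_j^m-R_j^n$; by Theorem~\ref{thm:hymain} one has $\Delta_j=Q_j^m\,t$, whence $\frac{Q_j^m-R_j^n}{Q_j^m}=t$ for every~$j$, and in particular $\frac{Q^m-R^n}{Q^m}=t=\frac{Q_j^m-R_j^n}{Q_j^m}$ with $Q=Q_1$, $R=R_1$. Squaring Corollary~\ref{cor:hgde} therefore gives
\[
F\biggl(\frac{N_j+2nr_j}{2mn},\frac{N_j}{2mn};1;\frac{Q(\tau)^m-R(\tau)^n}{Q(\tau)^m}\biggr)^2\=Q_j(\tau)^{N_j/n}\,,
\]
so it suffices to prove the ``modular'' identity $Q_j^{N_j/n}=Q^{N_j/n+r_j-1}R^{k_j-1}/\phi_j'$.

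For this I would substitute the explicit formulas~\eqref{eqn:hysol}. Using $k_1=r_1=1$, $\phi_1=\mathrm{Id}$, and $N_1=mn-m-n$, these read
\[
Q_j^{N_j}=\frac{(t')^n}{t^n(\phi_j')^n(1-t)^{n-k_j}}\,,\qquad Q^{N_1}=\frac{(t')^n}{t^n(1-t)^{n-1}}\,,\qquad R^{N_1}=\frac{(t')^m}{t^m(1-t)}\,.
\]
Raising the modular identity to the $nN_1$-th power turns both sides into products of integer powers of $t'$, $t$, $1-t$ and $\phi_j'$; a direct expansion shows that each side equals $\bigl(t'/(t\,\phi_j'\,(1-t)^{(n-k_j)/n})\bigr)^{nN_1}$, the only relations needed being $N_j+nr_j+mk_j=mn$ (the definition of~$N_j$) and $N_1=mn-m-n$. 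Taking $nN_1$-th roots and fixing the branch of the fractional powers by comparing leading $q$-expansion coefficients at~$\infty$ — as in the step $c=1$ of the proof of Corollary~\ref{cor:hgde} — then yields the modular identity, and hence the corollary.

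As an independent check, the modular identity also follows by comparing divisors: by Theorem~\ref{thm:hymain} both sides are twisted modular forms of pure $j$-th weight~$2$ on $\Delta(n,m,\infty)$ with respect to~$\phi$ (a short weight count using $N_j+nr_j+mk_j=mn$), and by Lemma~\ref{lem:divmod} together with $\mathrm{div}\,Q_j=\tfrac1m e_m$, $\mathrm{div}\,R_j=\tfrac1n e_n$, $\mathrm{div}\,\Delta_j=\infty$ (established in the proof of Theorem~\ref{thm:hymain}), both sides have divisor $\tfrac{N_j}{mn}e_m$. Their ratio is thus a holomorphic weight-zero modular function with empty divisor, i.e.\ a constant, which equals~$1$ by comparison of $q$-expansions at~$\infty$. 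I do not anticipate a genuine obstacle here: the argument is essentially bookkeeping, and the one point requiring care — the consistent choice of branches for the fractional powers of $Q_j$, $Q$, $R$ and $(1-t)$ — is handled, as throughout this section, by normalizing at the cusp.
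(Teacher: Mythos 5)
Your proposal is correct and follows essentially the route the paper intends: the paper states this corollary without proof, the paragraph preceding it indicating exactly your reduction to Corollary~\ref{cor:hgde} via the $j$-independence of the parameter $\frac{Q_j^m-R_j^n}{Q_j^m}=t$. The remaining identity $Q_j^{N_j/n}=Q^{N_j/n+r_j-1}R^{k_j-1}/\phi_j'$, which the paper leaves implicit, you verify correctly from the explicit formulas~\eqref{eqn:hysol} (and again by the divisor/weight count), the only input being $N_j+nr_j+mk_j=mn$.
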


\subsection{Examples}
\label{sec:examples}
\subsubsection{Arithmetic example: the group~$\Delta(3,3,\infty)$}
In this section we discuss the RRC system associated to the group~$\Delta(3,3,\infty)$, \red{which is a congruence subgroup of~$\SL2$ of level~$2$} generated by~$T=\left(\begin{smallmatrix}1 & 2 \\0 &1\end{smallmatrix}\right)$ and~$S=\left(\begin{smallmatrix}1 & -1 \\1 & 0\end{smallmatrix}\right)$.
We remark that in~\cite{zud03} (different) Ramanujan systems associated to arithmetic triangle groups are constructed. It is straightforward to relate the solutions of the systems in~\cite{zud03} to the ones of Theorem~\ref{thm:hymain}.

The specialization of Theorem~\ref{thm:hymain} to~$n=m=3$ implies that the RRC system associated to~$\Delta(3,3,\infty)$ is
\begin{equation}
\label{eqn:s33}
\left\{
\begin{aligned}
P'&\=P^2\-\frac{QR}{36}\\
Q'&\=2PQ\-\frac{R^2}{3}\\
R'&\=2PR\-\frac{Q^2}{3}
\end{aligned}
\right.\,,
\end{equation}
\red{where~$'=(\pi i)^{-1}\tfrac{d}{d\tau}$.}
\red{Solutions of this system can be constructed from the solutions of the hypergeometric differential equation~\eqref{eqn:hgde} with parameters~$\alpha=1/2,\,\beta=1/6,$ and~$\gamma=1$. If~$y,\hat{y}$ are two Frobenius solutions at the singular point~$0$ (normalized by~$y(0)=1$ and~$\hat{y}=\log(t)y(t)+\tilde{y}(t)$ with~$\tilde{y}(0)=0$), the parameter at~$\infty$ given by~$\tilde{q}:=\exp(\hat{y}/y)=t\exp(\tilde{y}/y)$ is related to the parameter~$q:=e^{\pi i\tau}$, where $\tau\in\Po$, by the formula~$\tilde{q}=48\sqrt{-3}\cdot q$. By inverting the power series for~$\tilde{q}$ to find the expansion of~$t$ in~$\tilde{q}$, and considering the relation between~$\tilde{q}$ and~$q$, it follows that the~$q$-expansions of the solutions~$P_1=P$, $Q=Q_1$ and~$R=R_1$ given in Theorem~\ref{thm:hymain} are
\[
\begin{aligned}
P(\tau)&=\frac{1}{6}\-4q^2\-12q^4\-16q^6\+\cdots\\
Q(\tau)&=1\+8\sqrt{-3}q\+24q^2\+32\sqrt{-3}q^3\+24q^4\+48\sqrt{-3}q^5\+\cdots\,,\\
R(\tau)&=1\-8\sqrt{-3}q\+24q^2\-32\sqrt{-3}q^3\+24q^4\-48\sqrt{-3}q^5\+\cdots\,.\
\end{aligned}
\]
A bit of experimentation shows that}
\begin{equation}
\label{eqn:a33}
P\=\frac{E_2}{6}\,,\quad Q\=\theta_3^4\+2\sqrt{-3}\cdot\theta_3^2\theta_2^2\+\theta_2^4\,,\quad R\=\theta_3^4\-2\sqrt{-3}\cdot\theta_3^2\theta_2^2\+\theta_2^4\,.
\end{equation}
where~\red{$\theta_2$ and~$\theta_3$} are the classical theta series
\[
\theta_3(\tau)\:=\sum_{n\in\Z}{q^{n^2}}\,,\quad\theta_2(\tau)\:=\sum_{n\in\Z+1/2}{q^{n^2}}\,.
\]
\red{From the proof of Corollary~\ref{cor:dim} we know that~$Q$ and~$R$ are modular forms of weight~$2$ with multiplier system given by~$v_Q(S)=e^{\frac{4\pi i}{3}}$ and~$v_R(S)=e^{\frac{2\pi i}{3}}$ respectively. Moreover, their restriction to the principal congruence subgroup~$\Gamma(2)$ gives modular forms with trivial multiplier system (this can be shown by looking at the value of~$v_Q(G)$ and~$v_R(G)$ where~$G=S^{-2}T^{-2}S^{-1}$ is a generator of~$\Gamma(2)$ together with~$T$).}
We conclude then that any modular form of rational weight on~$\Delta(3,3,\infty)$ is a polynomial in the combination of unary theta series~$Q,R$.

In~\cite{zud03}, functions~$A_1(q),\dots,A_r(q)\in\C[q]$ are called~\emph{Ramanujan functions} if they satisfy the following properties: they are algebraically independent over~$\C(q)$, they satisfy an algebraic system of differential equations, and their Taylor coefficients with respect to~$q$ are integral and of polynomial growth.
From Theorem~\ref{thm:hymain} and the explicit description~\eqref{eqn:a33} it follows that~$P,Q,R$ satisfy all these assumptions but the integrality of the coefficients. However, it is not hard to construct Ramanujan functions from~\eqref{eqn:a33}. The functions
\[
\hat{P}\:=P\,,\quad \hat{Q}\:=\frac{Q+R}{2}\=\theta^4+\theta_F^4\,,\quad\hat{R}\:=QR=E_4\,,
\]
have integral~$q$-expansion and it follows immediately from~\eqref{eqn:s33} that~$\hat{P},\hat{Q},\hat{R}$ satisfy the~RRC system
\[
\left\{
\begin{aligned}
\hat{P}'&\=\hat{P}^2\-\frac{\hat{R}}{36}\\
\hat{Q}'&\=2\hat{P}\hat{Q}\-\frac{\hat{Q}^2-2\hat{R}}{3}\\
\hat{R}'&\=4\hat{P}\hat{R}\-\frac{\hat{Q}^3-3\hat{Q}\hat{R}}{3}
\end{aligned}
\right.
\]
The remaining arithmetic cases for groups of the form~$\Delta(n,m,\infty)$ can be handled similarly.

\subsubsection{The group~$\Delta(2,5,\infty)$}
\label{sec:25}
In the case~$(n,m)=(2,5)$ we have~$h=2$ and~$N_1=3,N_2=1$. Let~$(1,\phi)$ denote the modular embedding and let~$B^2:=\phi'/Q_2$ be a holomorphic twisted modular form of weight~$(2,-6)$. The systems we get from Theorem~\ref{thm:hymain} are
\begin{equation}
\label{eqn:ss25}
\left\{
\begin{aligned}
P_1'&\=P_1^2\-\Bigl(\frac{3}{20}\Bigr)^2Q_1^3\\
Q_1'&\=\frac{4}{3}P_1Q_1\-\frac{R_1}{5}\\
R_1'&\=\frac{10}{3}P_1R_1\-\frac{Q_1^4}{2}
\end{aligned}
\right.
\qquad\qquad
\left\{
\begin{aligned}
P_2'&\=P_2^2\phi'\-\Bigl(\frac{1}{20}\Bigr)^2Q_2\phi'\\
Q_2'&\=4P_2Q_2\phi'\-\frac{R_2B^2}{5}\\
R_2'&\=10P_2R_2\phi'\-\frac{Q_2^3\phi'}{2}\,.
\end{aligned}
\right.
\end{equation}
We know that~$M_{(*,0)}(\Delta(2,5,\infty),\phi)=\C[Q_1,R_1]$ and~$M_{(0,*)}(\Delta(2,5,\infty),\phi)=\C[Q_2,R_2]$, but these twisted modular forms are not enough to generate the whole space~$M_{(*,*)}(\Delta(n,m,\infty))$.
For instance, consider the full Hilbert modular group~$\Gamma_5:=\mathrm{SL}_2(\mathcal{O}_5)$ associated to the ring of integers~$\mathcal{O}_5$ of the field~$\Q(\sqrt{5})$. The group~$\Delta(2,5,\infty)$ embeds into this \red{Hilbert} modular group via the real embeddings of its trace field~$\Q(\sqrt{5})$. As is well known~(\cite[Part II, Theorem 1.40]{bhgz}), there exists a Hilbert cusp form~$s_5$ of parallel weight~$(5,5)$ on~$\Gamma_5$ whose zero locus is the diagonal in the Hilbert modular surface~$\Po^2/\Gamma_5$. This implies that the restriction of~$s_5$ to the embedded curve~$\Po/\Delta(2,5,\infty)$ is not zero and defines a twisted cusp form of weight~$(5,5)$. This cusp form cannot be generated from the solutions~$Q_1,R_1,Q_2,R_2$ of the above systems. Nevertheless, the next proposition shows that we can actually construct the whole ring of integral weight twisted modular forms on~$\Delta(n,m,\infty)$ starting from systems~\eqref{eqn:ss25}.
\begin{proposition}
There exists a holomorphic twisted modular form~$B$ of weight~$(1,-3)$ on~$\Delta(2,5,\infty)$ such that
\[
M_{(\Z,\Z)}(\Delta(n,m,\infty))\=\C[Q_2,R_2,B]\,,
\]
where~$M_{(\Z,\Z)(\Delta(n,m,\infty))}$ denotes the space of twisted modular forms of integral weight.
The action of~$d/d\tau$ on~$M_{(*,*)}(\Delta(n,m,\infty))$ is described by the system of ODEs
\begin{equation}
\label{eqn:s25}
\left\{
\begin{aligned}
P_1'&\=P_1^2\-\Bigl(\frac{3}{20}\Bigr)^2Q_2^3B^4\\
P_2'&\=P_2B^2Q_2\cdot P_2\-\Bigl(\frac{1}{20}\Bigr)^2Q_2^2B^2\\
B'&=(P_1-3P_2B^2Q_2)\cdot B\\
Q_2'&\=4P_2B^2Q_2\cdot Q_2\-\frac{R_2B^2}{5}\\
R_2'&\=10P_2B^2Q_2\cdot R_2\-\frac{Q_2^4B^2}{2}\,.
\end{aligned}
\right.
\end{equation}
\end{proposition}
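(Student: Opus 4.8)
\emph{Overview and construction of $B$.} My plan is to adjoin to $\C[Q_2,R_2]$ a single new generator $B$, defined as a holomorphic square root of $\phi'/Q_2$ (I write $\phi=\phi_2$), and then to show that dividing by powers of $B$ reduces every integral‑weight twisted modular form to one of pure second weight. For $(n,m)=(2,5)$ the only admissible pair with $N_2=mn-mk_2-nr_2=1$ is $(k_2,r_2)=(1,2)$, so Lemma~\ref{lem:divmod}(2) gives $\mathrm{div}\,\phi'=\tfrac1m\,e_m$, while $\mathrm{div}\,Q_2=\tfrac1m\,e_m$ by Theorem~\ref{thm:hymain}; since both are moreover nonzero and bounded at the cusp, $\phi'/Q_2$ is holomorphic and nowhere vanishing on the simply connected domain $\Po$ and therefore admits a holomorphic square root $B$, unique up to sign. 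As $\phi'=\phi_2'$ has weight $(2,-2)$ (Example~3 in Section~\ref{sec:tmf}) and $Q_2$ has weight $(0,4)$, the function $B^2=\phi'/Q_2$ is a twisted modular form of weight $(2,-6)$, and the identity $(B|_{(1,-3)}\gamma)^2=B^2$ forces $B|_{(1,-3)}\gamma=\pm B$ for every $\gamma\in\Delta(2,5,\infty)$; hence $B$ is a holomorphic twisted modular form of weight $(1,-3)$ whose multiplier system absorbs the resulting sign character.

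\emph{Ring identification.} The inclusion $\C[Q_2,R_2,B]\subseteq M_{(\Z,\Z)}$ is immediate from the weights $(0,4),(0,10),(1,-3)$, and the three generators are algebraically independent ($Q_2,R_2$ by Theorem~\ref{thm:hymain}, and $B$ over $\C(Q_2,R_2)$ because only one power of $B$ can appear in a weighted‑homogeneous relation). For the reverse inclusion I would take $f\in M_{(w_1,w_2)}$ with $w_1\ge0$; since $B$ has no zeros, $f/B^{w_1}$ is a holomorphic twisted modular form of weight $(w_1,w_2)-w_1(1,-3)=(0,w_2+3w_1)$, hence lies in $\C[Q_2,R_2]$ by Theorem~\ref{thm:hymain}(2), so $f\in B^{w_1}\,\C[Q_2,R_2]\subseteq\C[Q_2,R_2,B]$; summing over weights yields the asserted equality. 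A comparison of the graded dimensions with Proposition~\ref{prop:dimtws} (via $\sum_j w_jN_j=3w_1+w_2$) confirms that nothing is missed; in particular the restriction of the Hilbert cusp form $s_5$ of parallel weight $(5,5)$ to $\Po/\Delta(2,5,\infty)$ is a polynomial in $Q_2,R_2,B$.

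\emph{The differential system.} Every equation of~\eqref{eqn:s25} should fall out of the two systems in~\eqref{eqn:ss25} --- instances of Theorem~\ref{thm:hymain} --- after substituting the two relations $\phi'=B^2Q_2$ (the definition of $B$) and $Q_1^3=Q_2^3B^4$. The second follows from Corollaries~\ref{cor:hgde} and~\ref{cor:hgde'} applied with $j=2$, which together give $Q_2^{1/2}=Q_1^{3/2}/\phi'$ and hence $Q_1^3=(\phi')^2Q_2=B^4Q_2^3$. Inserting $Q_1^3=Q_2^3B^4$ into the first equation of the $j=1$ system and $\phi'=B^2Q_2$ into the three equations of the $j=2$ system reproduces the $P_1'$, $P_2'$, $Q_2'$ and $R_2'$ equations verbatim. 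For the $B'$ equation I would use, from the proof of Theorem~\ref{thm:hymain}, that $P_1=\tfrac{3}{20}\tfrac{d}{d\tau}\log\Delta_1$ and $P_2=\tfrac{1}{20\phi'}\tfrac{d}{d\tau}\log\Delta_2$, together with $\Delta_1=Q_1^5t$ and $\Delta_2=Q_2^5t$ for the common Hauptmodul $t$ of $\Delta(2,5,\infty)$; subtracting gives
\[
P_1-3\phi'P_2\=\frac{3}{20}\,\frac{d}{d\tau}\log\frac{\Delta_1}{\Delta_2}\=\frac34\,\frac{d}{d\tau}\log\frac{Q_1}{Q_2}\=\frac{B'}{B}\,,
\]
where the last equality uses $Q_1^3=Q_2^3B^4$; since $\phi'=B^2Q_2$, this is precisely the third equation of~\eqref{eqn:s25}.

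\emph{Expected main obstacle.} Granting Theorem~\ref{thm:hymain}, Proposition~\ref{prop:dimtws} and the hypergeometric inversion formulae, the bulk of the argument is bookkeeping. The genuinely delicate point is the construction of $B$: one must recognize that the right object to adjoin is the weight $(1,-3)$ square root of $\phi'/Q_2$, and verify through the divisor computation that $\phi'/Q_2$ really is the square of a \emph{holomorphic} twisted modular form --- had $\phi'$ or $Q_2$ carried an extra zero or pole, no such $B$ would exist and $\C[Q_2,R_2,B]$ would fail to be stable under $d/d\tau$. The identity $Q_1^3=Q_2^3B^4$, which couples the two components of the modular embedding through the shared Hauptmodul, is the other input without which the five equations of~\eqref{eqn:s25} do not close; establishing it cleanly (rather than by a divisor-plus-$q$-expansion comparison) is what the two inversion formulae are for.
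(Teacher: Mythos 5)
Your proposal is correct, and it reaches the same proposition by a genuinely different route at the one point where the paper does real work: the construction of $B$. The paper builds $B$ concretely out of Hilbert modular forms, setting $A:=t\cdot(\phi'Q_2)^3/s_5$ (with $s_5$ the parallel-weight-$(5,5)$ cusp form on $\mathrm{SL}_2(\mathcal{O}_5)$ vanishing on the diagonal) and then $B:=\phi'/A$, verifying holomorphy and the weight by tracking divisors; the identity $B^2Q_2=\phi'$ is then a consequence. You instead observe that $\mathrm{div}\,\phi_2'=\mathrm{div}\,Q_2=\tfrac{1}{5}e_5$, so that $\phi'/Q_2$ is nowhere vanishing and nonzero at the cusp, take a holomorphic square root on the simply connected domain $\Po$, and note that the cocycle $B(\gamma\tau)/B(\tau)$ automatically satisfies the paper's two defining conditions for an automorphy factor of weight $(1,-3)$ (the sign ambiguity being absorbed into the multiplier system, which the paper's definition of $M_{\vec w}$ permits). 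This is more elementary and avoids the Hilbert modular input entirely; what it loses is the explicit link between $B$ and the geometry of the Hilbert modular surface, which is the paper's motivation for why $s_5|_{\Po/\Delta(2,5,\infty)}$ must lie in the ring. Your treatment of the differential system is actually more complete than the paper's: where the paper says only that system~\eqref{eqn:s25} "can be deduced" from~\eqref{eqn:ss25} via $B^2Q_2=\phi'$, you supply the second needed identity $Q_1^3=Q_2^3B^4$ (correctly extracted from the two inversion formulae with $j=2$) and derive the $B'$ equation from $P_j=\tfrac{N_j}{2mn}\,d\log\Delta_j/d\phi_j$ and $\Delta_j=Q_j^5t$, which checks out. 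Your direct argument for the reverse inclusion (dividing by $B^{w_1}$ and invoking Theorem~\ref{thm:hymain}(2)) replaces the paper's graded-dimension comparison and is equally valid for $w_1\ge0$; like the paper, it implicitly treats only non-negative first weight, which is the intended reading of $M_{(\Z,\Z)}$.
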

\begin{proof}
The relevant part of the proof concerns the construction of~$B$.
As mentioned above, there exists a twisted cusp form of weight~$(5,5)$ on~$\Delta(2,5,\infty)$. The product~$\phi'Q_2$ defines a twisted modular for of weight~$(2,2)$ with a double zero in~$e_5$, since both~$Q_2$ and~$\phi'$ have a simple zero in~$e_5$. From the normalization of the Hauptmodul~$t$ of~$\Delta(n,m,\infty)$ in~\eqref{eqn:divhaupt} we see that~$t\cdot(\phi'Q_2)^3$ has a zero at~$\infty$ and one in~$e_5$. It follows that~$A:=t\cdot(\phi'Q_2)^3/s_5$ is a holomorphic twisted modular form of weight~$(1,1)$ with a unique zero in~$e_5$. Finally, we define~$B:=\phi'/A$. It is by construction holomorphic and of weight~$(1,-3)$. We find in particular that~$B^2Q_2=\phi'$, and from this relation we can deduce system~\eqref{eqn:s25} from systems~\eqref{eqn:ss25}.

Since~$Q_2$ and~$R_2$ are algebraically independent from Theorem~\ref{thm:hymain}, it is immediate to prove that~$Q_2,R_2$ and~$B$ are also algebraically independent ($B$ is the unique form of mixed weight among the three).
To prove that~$\C[Q_2,R_2,B]$ is the whole space of twisted modular forms of integral weight we argue like in the proof of Theorem~\ref{thm:hymain}: we compute the dimension of the graded part of the polynomial ring and compare it with the upper bound given in Proposition~\ref{prop:dimtws}. An element of weight~$(k,l)$ in~$\C[Q_2,R_2,B]$ comes from monomials of the form~$B^k\cdot Q_2^a\cdot R_2^b$ with~$4a+10b=l+3k$. The dimension of the graded piece~$\C[Q_2,R_2,B]_{(k,l)}$ is then the number of pairs~$(a,b)\in\Z_{\ge0}^2$ with~$4a+10b=l+3k$. The same argument as in the second part of the proof of Theorem~\ref{thm:hymain} shows that it is precisely the number predicted by the bound in Proposition~\ref{prop:dimtws}.
\end{proof}

We conclude this discussion with an observation on the RC structure.
System~\eqref{eqn:s25} does not fall into the case described in Theorem~\ref{thm 1}, because the space of twisted modular forms on~$\Delta(2,5,\infty)$ is bi-graded. However, it shares the main characteristics of RRC systems: if we write~$B^2Q_2=\phi'$,  each equation in~\eqref{eqn:s25} is of the form
\begin{equation}
f'\=(kP_1+lP_2\phi')\cdot f+p_f(Q_2,R_2,B)
\end{equation}
where~$(k,l)$ is the weight of the twisted modular form~$f$ and~$p_f$ is a weighted homogeneous polynomial of weight~$(k+2,l)$. In other words, we have a Ramanujan-Serre derivation on~$M_{(\Z,\Z)}(\Delta(n,m,\infty))$ given by
\[
\partial f := f'\-(kP_1+l\phi'P_2)f\;\;\in M_{(k+2,l)}(\Delta(n,m,\infty))\,
\]
if~$f$ is of weight~$(k,l)$.
This observation suggests that the RRC systems we introduced may have a natural generalization to systems of first order nonlinear ODEs whose space of solutions has naturally a structure of multi-graded algebra.

\subsection*{Acknowledgement}{The first author is supported by the LOEWE research unit USAG, and by the Deutsche Forschungsgemeinschaft (DFG) through the Collaborative Research Centre TRR 326 “Geometry and Arithmetic of Uniformized Structures”, project number 444845124.
The second author started and concluded the major part of his results during his one-year visit to Max Planck Institute for Mathematics (MPIM) Bonn. So, he would like to thank MPIM and its staff for preparing such an excellent ambience for doing mathematical work. 
Finally, the authors wants to thank the anonymous referees for many valuable suggestions that helped to improve the paper.}

\bibliography{ref}{}
\bibliographystyle{plain}
\end{document}